\documentclass[12pt]{article}

\usepackage[english, main=english]{babel}
\usepackage{epsfig}
\usepackage{dsfont}
\usepackage{subfig}
\usepackage{amsmath,amssymb,amsthm,thmtools}
\usepackage{tikz}
\usepackage{xparse}
\usepackage{ifthen}
\usepackage{textcomp}
\usepackage{mathtools}
\usepackage{mathrsfs}
\usepackage{enumitem}
\usepackage{comment}
\usepackage[T1]{fontenc}
\usepackage[utf8]{inputenc}
\usepackage{graphicx}
\usepackage{pdfpages}
\usepackage{float}
\RequirePackage{times} 
\usepackage[colorlinks,linkcolor=blue,urlcolor=black]{hyperref}
\usepackage[dvipsnames]{xcolor}

\newcommand{\N}{\mathbb{N}} 
\newcommand{\Q}{\mathbb{Q}} 
\newcommand{\R}{\mathbb{R}} 
\newcommand{\C}{\mathbb{C}} 
\renewcommand{\P}{\mathbb{P}} 
\newcommand{\E}{\mathbb{E}} 
\newcommand{\1}{\mathds{1}}
\renewcommand{\Re}{\mathrm{Re}}

\renewcommand{\r}{\mathscr{R}}

\newcommand{\s}{\mathscr{S}}

\renewcommand{\d}{\mathrm{d}}

\theoremstyle{definition}
\newtheorem{definition}{Definition}[section]


\theoremstyle{plain}

\newtheorem{Lemma}[definition]{Lemma}
\newtheorem{Corollary}[definition]{Corollary}

\newtheorem{Theorem}[definition]{Theorem}

\theoremstyle{remark}
\newtheorem{Remark}{Remark}

\title{MA(1) processes with Laplace innovations conditioned to stay positive}
\author{Frank Aurzada and  Virginia Worf}

\begin{document}

\maketitle
	
\allowdisplaybreaks
\begin{abstract}
  We study a moving-average process with (not necessarily symmetric) Laplace innovations under the constraint of positivity. In the three nondegenerate parameter regimes $0<\theta<1$, $\theta>1$, and $\theta<0$, we prove convergence of the conditioned finite-dimensional distributions and identify the limit as a Doob $h$-transform. The regimes lead to qualitatively different limiting dynamics: the invariant law is supported on the positive half-line for $0<\theta<1$, the conditioned chain is confined to the negative half-line for $\theta>1$, and the dynamics are genuinely two-sided and governed by $q$-trigonometric functions for $\theta<0$. In each case, the persistence exponent, sharp persistence asymptotics, the defining eigenfunction, and the unique invariant distribution are obtained explicitly.
\end{abstract}
{\bf Keywords:} conditioned Markov chain; Doob's $h$-transform; invariant distribution; Laplace innovations; moving-average process; persistence probability; $q$-trigonometric function; $q$-ex\-po\-nen\-tial function;
\section{Introduction and main results}
Conditioning stochastic processes on events extending over long time horizons is a fundamental topic in probability theory. A classical example is Doob's construction of Brownian motion conditioned to remain positive \cite{doob}. Such conditioning procedures produce new stochastic models whose dynamics naturally differ from those of the original process. In this paper, we prove that conditioning a class of moving-average processes on positivity over increasingly long time intervals gives rise to a limiting process whose transition mechanism can be characterized through a Doob $h$-transform.

The model considered here provides an explicitly solvable instance of persistence conditioning for a Markov chain on a continuous, unbounded state space. Its main feature is a qualitative change in the conditioned dynamics as the parameter $\theta$ varies. In particular, the analytic form of the eigenfunctions, the effective state space, and the invariant distribution of the limiting chain depend essentially on $\theta$.
Nevertheless, in all three regimes $0<\theta<1$, $\theta>1$ and $\theta<0$, we determine explicitly the sharp persistence asymptotics, the Doob $h$-transform, and the unique invariant distribution. This complements the bounded uniform-innovation setting studied in \cite{AW}. The model may therefore serve as a tractable benchmark for persistence conditioning, since it remains explicitly solvable despite the qualitatively different limiting structures arising across the parameter regimes.
\subsection{Model and notation}
Let $(X_i)_{i\in\N_0}$ be i.i.d. random variables with an asymmetric Laplace distribution for some asymmetry parameter $\xi\in[0,1]$, i.e.\ with the density
\begin{align*}
   \phi_\xi( x)\coloneqq  \xi  e^x\mathds{1}_{x<0}+(1-\xi)e^{-x}\mathds{1}_{\{x>0\}},~~~~~~~~x\in\R.
\end{align*}
Whenever $\xi$ is fixed, we write $\phi=\phi_\xi$.
 For $\theta\in\R$, we define the process $(Z_i)_{i\in\N_0}$ as the MA(1) process
$Z_i\coloneqq X_{i+1}-\theta X_{i},~i\ge1$, and $Z_0\coloneqq X_1$.
The persistence probability of the process $(Z_i)_{i\in\N_0}$ can be written in terms of the Markov chain $M_0\coloneqq(X_1,0)$ and  $M_i\coloneqq(X_{i+1},X_i)$ for $i\geq 1$, as follows:
    \begin{align*}
   p_n^{\xi}(\theta)\coloneqq \P(\min_{1\leq i\leq n} Z_i> 0)=\P(X_2>\theta X_1,X_3>\theta X_2,\dots,X_{n+1}>\theta X_{n})=\P(\Omega_n),~~n\ge 1,
\end{align*}
where
$$
S\coloneqq \{(x_2,x_1)\in\R^2\mid x_2>\theta x_1\}\qquad\text{and}\qquad \Omega_n\coloneqq \cap_{i=1}^n\{M_i\in S\},
$$
and we set $p_0^{\xi}(\theta)\coloneqq1$. For $y\in\R$, let $\P_y$ denote the law of the chain with deterministic initial value $X_1=y$. The limiting finite-dimensional distributions of $(Z_i)_{i\in\N}$ conditioned to stay positive and started at $Z_0=y\in\R$ are represented by 
\begin{align}\label{def_lim_process}
    \lim_{n\to\infty}\P_y((M_k)_{1\leq k\leq m}\in\cdot\mid\Omega_n)=\P_y((M'_k)_{1\leq k\leq m}\in\cdot~),
\end{align}
provided that the process $M'$ on the right-hand side exists.
The transition probability of the Markov chain $M$ for $(x_1,x'_1),(x_2,x'_2)\in\R^2$ is given by
\begin{align*}
     \P_{(x_1,x'_1)}(M_1\in (\d x_2,\d x'_2))=\delta_{x_1}(\d x'_2)\P(X_1\in \d x_2)=\delta_{x_1}(\d x'_2)\phi(x_2)\d x_2.
\end{align*} 
Since we only consider two points satisfying $x'_2=x_1$, it suffices to consider $x_1, x_2\in \R$ instead of two $\R^2$-valued points. Therefore, we define the reduced transition kernel of $M$ by
   \begin{align*}
       p(x_1,\d x_2)\coloneqq \phi(x_2)\d x_2.
   \end{align*}
Consistent with the notation established in \cite{AR}, we use the following notation for $q>0$ and $n\in\N_0$. The $q$-Pochhammer symbol is defined as
\begin{align*}
    (a;q)_n\coloneqq\prod_{k=0}^{n-1} (1-a q^k)\quad\text{and, for }q<1, \quad(a;q)_\infty\coloneqq\prod_{k=0}^{\infty} (1-a q^k).
\end{align*}
For $q\neq 1$, the $q$-factorial is defined by
\begin{align*}
    [n]_q! \coloneqq \prod_{k=1}^{n}[k]_q   = \frac{(q;q)_n}{(1-q)^n},\quad\text{where}\quad [k]_q\coloneqq\frac{1-q^k}{1-q},
\end{align*}
with the continuous extension $[n]_1!=n!$. Moreover, we introduce the $q$-analogues of the classical exponential and trigonometric functions 
\begin{align*}
e_q(z)\coloneqq \sum_{n=0}^\infty \frac{z^{n}}{[n]_q!},\quad
    \cos_q(z) \coloneqq \sum_{n=0}^\infty \frac{(-1)^nz^{2n}}{[2n]_q!},
    \quad\text{and}\quad
    \sin_q(z) \coloneqq \sum_{n=0}^\infty \frac{(-1)^nz^{2n+1}}{[2n+1]_q!}.
\end{align*}
For $0<\theta<1$ and $y\in\R$, or for $\theta>1$ and $y\le 0$, we define
\begin{align*}
    \sin_\theta(z,y)&\coloneqq \sum_{j=0}^\infty\frac{(-1)^{j}z^{2j+1}}{[2j+1]_{\theta}!}e^{\theta\frac{1-\theta^{2j+1}}{1-\theta} y}, \quad\text{and}\quad\cos_\theta(z,y)\coloneqq\sum_{j=0}^\infty\frac{(-1)^{j}z^{2j}}{[2j]_{\theta}!}e^{\theta\frac{1-\theta^{2j}}{1-\theta} y},
\end{align*}
with the continuous extensions
$\sin_1(z,y)\coloneqq \sin(ze^y)$ and $\cos_1(z,y)\coloneqq\cos(ze^y)$. For the properties of these functions, we refer to Section~\ref{sec:auxiliaryqexpetc}.
 For $\theta>0$, denote by $z_c=z_c(\theta)$ the smallest positive real zero of $\cos_{\theta}$.  By Lemma 12 in \cite{AR} for $0<\theta\le 1$ and Corollary~\ref{arctan_cor} below for $\theta>1$, 
\begin{align*}
    \tan_{\theta}:[0,z_c)\to[0,\infty),\quad z\mapsto\frac{\sin_\theta(z)}{\cos_\theta(z)}
\end{align*}
is a bijective, strictly increasing function, so we may define its inverse $\arctan_{\theta}$. 

\subsection{Main results}
We now state our main results. The limiting transition kernel is determined by an eigenvalue $\lambda$ and an eigenfunction $h$ of the
sub-stochastic kernel of the Markov chain $M$ killed upon leaving $S$. The
corresponding eigenvalue equation is
\begin{align}\label{eq_eigenfunction}
    \lambda h(x)=\int_{\{y>\theta x\}}\!\!\!\!\!\!\!h(y)\phi(y)\d y=\int_{\{y>\theta x\}\cap(-\infty,0)}\!\!\!\!\!\!\!\xi  e^yh(y)\d y+\int_{\{y>\theta x\}\cap(0,\infty)}\!\!\!\!\!\!\!(1-\xi)  e^{-y}h(y)\d y.
\end{align}
The form of $\lambda$ and $h$, and consequently that of the limiting process, depends on
the parameter regime, leading to separate expressions for $0<\theta<1$,
$\theta>1$, and $\theta<0$. Theorem~\ref{maintheorem} identifies the
limiting process and its transition kernel, while
Theorem~\ref{invariant_distribution} determines its unique invariant
distribution.
\begin{Theorem}\label{maintheorem}
    Let $\theta\in\R$ with $\theta\neq1$, $\xi\in[0,1]$, and
$\Omega\in\{\R,\R_{<0}\}$, and assume that
$(\theta,\xi,\Omega)$ belongs to one of the parameter regimes
 (i)-(v) below. Then, for every $y\in\Omega$, the limiting process $M'$ in \eqref{def_lim_process} exists and is a time-homogeneous Markov chain on $\Omega$ with reduced transition kernel 
\begin{align*}
     p'(x_1,\d x_2)&\coloneqq\mathds{1}_S(x_2,x_1)\frac{h(x_2)}{h(x_1)}\frac{1}{\lambda}\phi(x_2)\d x_2,\quad x_1,x_2\in\Omega,
\end{align*}
 where $\lambda$ and $h:\R\to\R$ satisfy \eqref{eq_eigenfunction} and are given as follows:
\begin{enumerate}[label=(\roman*)]
     \item  Let $0<\theta<1$, $\xi\in [0,1)$ and $\Omega=\R$. Then, $\lambda=(1-\xi)(1-\theta)$ and
     \begin{align*}
         h(x)=\begin{cases}
          e^{\frac{\theta}{\theta-1}x},~~~&x>0,
          \\e^{\frac{\theta x}{1-\theta}}\sum_{m=0}^\infty\frac{\left(-\frac{\theta x}{1-\theta}\right)^m}{m!}\left(\frac{-\xi}{\lambda}(1-\theta);\theta\right)_m,~~~~&x\le 0.
      \end{cases}
     \end{align*}
     \item  Let $\theta>1$, $\xi\in(0,1]$ and $\Omega=\R_{<0}$. Then, $\lambda=\left(1-\frac{1}{\theta}\right)\xi$ and
     \begin{align*}
         h(x)= \begin{cases}
          0,~~~&x>0,
          \\{e_\theta\left(\frac{1-\xi}{\lambda}\right)}\sum_{j=0}^\infty \frac{\left(\frac{-\xi}{\lambda}\right)^{j}}{[j]_\theta!}e^{\frac{\theta(1-\theta^j)}{1-\theta}x},\quad &x\le0.\end{cases}
     \end{align*}
 \item Let $\theta <0$, $\xi\in(0,1)$ and $\Omega=\R$. Then, $\lambda= \frac{\sqrt{\xi(1-\xi)}}{\arctan_{-\theta}\left(\sqrt{\frac{\xi}{1-\xi}}\right)}$ and
 \begin{align*}
      h(x)=\begin{cases}
       \cos_{-\theta}\left(\arctan_{-\theta}\left(\sqrt{\frac{\xi}{1-\xi}}\right),-x\right) ,~~&x>0,
        \\\frac{\sqrt{1-\xi}}{\sqrt{\xi}}\sin_{-\theta}\left(\arctan_{-\theta}\left(\sqrt{\frac{\xi}{1-\xi}}\right),x\right),~~&x\le 0.
        \end{cases}
 \end{align*}
 \item Let $\theta <0$, $\xi=0$ and $\Omega=\R$. Then, $\lambda=1$ and \begin{align} \label{eqn:lasth}
    h(x)=\begin{cases}
        1,~~&x\ge 0,
        \\ e^{-\theta x},&x<0.
    \end{cases}
\end{align}
 \item Let $\theta=0$, $\xi\in[0,1)$ and $\Omega=\R$. Then, $\lambda=1-\xi$ and $h(x)\equiv 1$.
\end{enumerate}
\end{Theorem}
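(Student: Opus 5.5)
The plan is to reduce everything to sharp persistence asymptotics for the killed chain. By the Markov property, for fixed $m$ and a bounded measurable $f$ of $(M_1,\dots,M_m)$,
\begin{align*}
\E_y\bigl[f(M_1,\dots,M_m)\mid\Omega_n\bigr]=\frac{\E_y\bigl[f(M_1,\dots,M_m)\mathds{1}_{\Omega_m}\,P_{n-m}(X_{m+1})\bigr]}{P_n(y)},\qquad P_k(x)\coloneqq\P_x(\Omega_k).
\end{align*}
Here $P_k(x)=(K^k\1)(x)$, where $K g(x)=\int_{\{z>\theta x\}}g(z)\phi(z)\d z$ is the sub-Markov kernel of the killed chain. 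The theorem follows once we show $P_k(x)\sim c\,\lambda^k h(x)$ for every $x\in\Omega$, and that $P_k(x)/\lambda^k$ is dominated on $\Omega$ by an integrable majorant, say $C h(x)$ or $C(1+e^{\varepsilon|x|})$. Dominated convergence then yields the limit $\E_y[f\,\mathds{1}_{\Omega_m}h(X_{m+1})]/(\lambda^m h(y))$, which is exactly the $m$-step law of the chain with kernel $p'$. In regime (ii), $h=0$ on $(0,\infty)$ and $P_k(x)$ is $o(\lambda^k)$ there, so the conditioned chain never leaves $\R_{<0}$. This is why $\Omega=\R_{<0}$ in that case.

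First I would verify \eqref{eq_eigenfunction} in each regime by direct computation. For $x>0$ and $0<\theta<1$, only $y>\theta x>0$ contributes, and $\int_{\theta x}^\infty(1-\xi)e^{-y}e^{\theta y/(\theta-1)}\d y=(1-\xi)(1-\theta)e^{\theta x/(\theta-1)}$. For $x\le0$, one integrates the series termwise and uses the Pochhammer recursion $(a;\theta)_{m+1}=(a;\theta)_m(1-a\theta^m)$. In regime (ii), each exponential $e^{\frac{\theta(1-\theta^j)}{1-\theta}x}$ is mapped by the kernel to a multiple of the next one. The shift $j\mapsto j+1$ together with $[j+1]_\theta=\theta[j]_\theta+1$ and the normalisation $e_\theta((1-\xi)/\lambda)$ produce the identity. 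In regime (iii), the same termwise computation, applied to the $q$-trigonometric series with $q=-\theta$, reduces the equation to a pair of coupled relations $\lambda\cos=\sqrt{\xi(1-\xi)}\,\sin$-type at the argument $z=\arctan_{-\theta}(\sqrt{\xi/(1-\xi)})$. The sign flip $y>\theta x$ sends positive $x$ to the negative half-line and conversely. Regimes (iv) and (v) are one-line checks.

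Second, and this is the main obstacle, I need the sharp asymptotics of $K^k\1$. My approach uses the structure of the Laplace density: $K$ maps exponentials $e^{ax}$ to finite combinations of exponentials and constants on each half-line. Hence $P_k$ can be written recursively as an explicit sum of exponentials, with coefficients satisfying linear recursions in $k$. Encoding these recursions by a generating function $G(s,x)=\sum_k s^kP_k(x)$ turns them into a $q$-difference/functional equation, solved by the same $q$-series (for $\theta<0$, $q$-trigonometric functions in $s$). The asymptotics then follow by singularity analysis. The smallest singularity is at $s=1/\lambda$; it is a simple pole coming from the zero $z_c$-type structure, namely where $\tan_{-\theta}$ hits the value $\sqrt{\xi/(1-\xi)}$, which is where Corollary~\ref{arctan_cor} and the results of Section~\ref{sec:auxiliaryqexpetc} enter. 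The residue is proportional to $h(x)$. Meanwhile $K$ is positive and $h>0$ on $\Omega$, so $\lambda$ is the Perron root, and any other singularity lies strictly farther out. An alternative route I would try if singularity analysis gets messy is a quasi-compactness argument. In the weighted space $L^\infty(1/h)$, the operator $\lambda^{-1}K$ is Markov after the $h$-transform, and a Doeblin minorisation holds, since every step lands with density bounded below on a compact set. Geometric ergodicity of the $h$-transformed chain would then give $P_k(x)/(\lambda^kh(x))\to c$, with $c=\int\1/h\,\d\pi$ for the invariant law $\pi$. Domination would follow from the same weighted bound. The delicate point on this route is finiteness of $\int h^{-1}\d\pi$ and the uniformity near where $h$ is small, for example $x\to-\infty$ in regime (iii).

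Finally, regimes (iv) and (v) are trivial. For (v), $P_k(x)=(1-\xi)^k$. For (iv), $P_k(x)=h(x)$ exactly for $k\ge1$, since with $\xi=0$ all innovations are positive. So only (i)–(iii) require the asymptotic analysis above.
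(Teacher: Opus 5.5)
\textbf{Verdict.} Your architecture matches the paper's, but the two steps that carry the real content (locating the dominant singularity and proving $h>0$) are asserted rather than proved, and the first one is justified by an argument that does not work.

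\textbf{What is fine.} The shared architecture is: the ratio $\E_y[f\,\1_{\Omega_m}P_{n-m}(X_{m+1})]/P_n(y)$ plus dominated convergence; explicit exponential sums for $P_k$ whose generating functions are quotients of $q$-exponential or $q$-trigonometric functions; and a transfer lemma at a simple pole $1/\lambda$ whose residue is proportional to $h(y)$. Regimes (iv), (v) and the direct checks of \eqref{eq_eigenfunction} are fine. In (ii), note that the computation closes because $e_\theta(-\xi/\lambda)=e_\theta(-1/(1-\bar\theta))=0$, i.e.\ $h(0)=0$, and not because of the prefactor $e_\theta((1-\xi)/\lambda)$.

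\textbf{Gap 1: the dominant singularity.} Your claim ``$K$ is positive and $h>0$, so $\lambda$ is the Perron root and every other singularity lies strictly farther out'' does not follow on this non-compact state space. A bounded positive $h$ only gives $K^k\1\ge\lambda^k h/\|h\|_\infty$, i.e.\ radius of convergence at most $1/\lambda$. Since $h$ is not bounded below on $\Omega$ in any of (i)--(iii), there is no matching reverse bound. Pringsheim's theorem only makes the radius itself singular; it does not say that it is the only singularity on the circle, or that it is a simple pole. The paper does this by hand:
\begin{itemize}
\item For $\theta>0$, it uses the product formulas. The poles of $e_\theta((1-\xi)z)$, respectively of $1/e_\theta(-\xi z)$, are real, positive and simple, and the remaining factor $v_y$ is entire.
\item For $\theta<0$, it shows that $g(z)=\cos_{-\theta}(uz)+\gamma^{1/2}\sin_{-\theta}(uz)$ has no non-real zeros, using $|e_q(w)|>|e_q(-w)|$ for $\Re(w)>0$.
\item It shows $z_0$ is the only zero of $g$ in $|z|\le z_0$, by monotonicity of $\tan_{-\theta}$ on $[0,z_c)$.
\item It shows $g'(z_0)\neq 0$, by positivity of $e_q'(iz)/e_q(iz)+e_q'(-iz)/e_q(-iz)$.
\item For $-1<\theta<0$, it shows the poles of the $q$-trigonometric factors at $w_k^\pm/u$ are removable, via $\mathrm{Res}\,\sin_{-\theta}=\pm i\,\mathrm{Res}\,\cos_{-\theta}$.
\end{itemize}

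\textbf{Gap 2: positivity of $h$ on $\Omega$.} You take $h>0$ for granted. It is needed both to define $p'$ and to have $f(1/\lambda)=c\,h(y)\neq 0$ in the transfer lemma; otherwise $P_k(y)\sim c\lambda^k h(y)$ has no content.
\begin{itemize}
\item In (ii) this is the hardest lemma of the paper (Lemma~\ref{thetage1_h_pos}). Up to a positive factor, $h(-x)=s(x)=\sum_j(-1)^j e^{-\alpha_j x}/((1-\bar\theta)^j[j]_\theta!)$, an alternating series with $s(0)=0$. Positivity needs the truncations $S_n$ with $S_n(0)=0$ (Cauchy $q$-binomial theorem), the delay equation $s'(x)=\frac{\theta}{1-\bar\theta}e^{-\theta x}s(\theta x)$, and an induction.
\item In (iii), $h(y)\neq0$ comes from showing that the radius of convergence equals $z_0$ for every $y$, via comparisons such as $P_n(0)\le P_n(y)\le \frac{e^y}{1-\xi}P_{n+1}(0)$ for $y>0$. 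Then the residue $c\,h(y)$ cannot vanish, and continuity plus the sign at $0$ finish the argument.
\item In (i), one iterates the eigenvalue equation outward from a neighbourhood of $0$.
\end{itemize}

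\textbf{Smaller gap: the uniform majorant.} Your majorant $C h(x)$ or $C(1+e^{\varepsilon|x|})$ is not derived from anything, and pointwise asymptotics give no bound uniform in $x$. The paper's fix is $P_k(x)\le p^\xi_{k-1}(\theta)$ (drop the first constraint), together with $p^\xi_k(\theta)\sim c_2\lambda^{k+1}$. That asymptotic is known for (i) and (ii), but must be derived for (iii) by letting $y\to\infty$ in the generating function.
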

    The case $\theta=1$ is excluded in Theorem~\ref{maintheorem} since the conditioning leads to a degenerate limit, as $$\P(y<X_2<X_3<\dots<X_{n+1})=\frac{\P(X_1>y)^n}{n!}.$$  
    The persistence behavior in the parameter cases $\xi\in[0,1]$
    that are not covered by Theorem~\ref{maintheorem} does not fall within the positive-exponential-rate Doob-transform framework and is discussed separately in Section~\ref{trivialcases}.

    In each of the cases of Theorem~\ref{maintheorem}, $\lambda$ is  
    the persistence exponent of the persistence probabilities, corresponding to the decay rate defined by (cf.\ Section~2.1 in \cite{AMZ})
    $$
    \log(\lambda)\coloneqq\lim_{n\to\infty}\frac{1}{n}\log p_n^\xi(\theta).
    $$

We now turn to the invariant distribution of $p'$.
\begin{Theorem}\label{invariant_distribution}
  Let $(E_k)_{k\ge1}$ be i.i.d. exponentially distributed random variables with parameter $1$.    Let $\theta\in\R$ with $\theta\neq1$, $\xi\in[0,1]$, and
$\Omega\in\{\R,\R_{<0}\}$, and assume that
$(\theta,\xi,\Omega)$ belongs to one of the parameter regimes
 (i)-(v) below. Then, the reduced  kernel $p'$ of $M'$ from Theorem~\ref{maintheorem} admits a unique invariant probability measure $\pi$ on $\Omega$.
Explicitly, $\pi$ is given as follows:
    \begin{enumerate}[label=(\roman*)]
     \item  Let $0<\theta<1$, $\xi\in[0,1)$ and $\Omega=\R$. Then, 
     \begin{align}\label{inv_distr_i}
         \pi(\d x) 
         &= \P\left( (1-\theta)\sum_{k=0}^\infty\theta^kE_{k+1} \in\d x \right).
     \end{align}
     \item  Let $\theta>1$, $\xi\in(0,1]$ and $\Omega=\R_{<0}$. Then, 
     \begin{align*}
        \pi(\d x) 
       &= \P\left( -(1-\theta^{-1}) \sum_{k=0}^\infty\theta^{-k}E_{k+1} \in\d x \right).
     \end{align*}
 \item Let $\theta <0$, $\xi\in(0,1)$ and $\Omega=\R$. Then, with $a\coloneqq \arctan_{-\theta}\left(\sqrt{\frac{\xi}{1-\xi}}\right)$,
\begin{align*}
    &\pi(\d x) =\frac{1}{b}
    \begin{cases} e^{-x} \cos_{-\theta}\left(a,-x\right) \cos_{- \theta^{-1}}\left(a,-x\right)\d x, &x>0,\\ e^x \sin_{-\theta}\left(a,x\right) \sin_{- \theta^{-1}}\left(a,x\right)\d x, &x\leq0, \end{cases}
\end{align*}
where $b\!\coloneqq \!\int_0^\infty e^{-x}\! \cos_{-\theta}(a,-x) \cos_{-\theta^{-1}}(a,-x)\d x + \int_{-\infty}^0 e^x \!\sin_{-\theta}(a,x) \sin_{-\theta^{-1}}(a,x)\d x$.
 \item Let $\theta <0$, $\xi=0$ and $\Omega=\R$. Then, 
  $\pi(\d x) =\P(E_1\in\d x)$.
 \item Let $\theta=0$, $\xi\in[0,1)$ and $\Omega=\R$. Then, 
  $\pi(\d x) =\P(E_1\in\d x)$.
\end{enumerate}
\end{Theorem}
    We would like to stress in particular the interesting qualitatively different ranges: For $0<\theta<1$, the conditioned chain becomes eventually positive and has an AR(1)-type stationary law. For $\theta>1$, the Doob limit is confined to the negative half-line and also has an AR(1)-type stationary law. For $\theta<0$, the dynamics remain genuinely two-sided with an exotic explicit $q$-trigonometric invariant density.
    We also remark that in each case except for $(iii)$ the invariant distribution is independent of $\xi$.

For the remaining part of the paper, we will additionally work with the persistence probability $ p_n^{\xi,y}(\theta)$ started at some $y\in\R$, defined by $p_0^{\xi,y}(\theta)\coloneqq 1$ and
\begin{align*}
    p_n^{\xi,y}(\theta)\coloneqq \P(X_2>\theta y,X_3>\theta X_2,\dots,X_{n+1}>\theta X_{n})=\P_y(\Omega_n),~~~~n\ge 1.
\end{align*}
The generating function of the persistence probabilities  $(p_n^{\xi,y}(\theta))$ is defined by $$\hat{P}_{\xi,y,\theta}(z)\coloneqq \sum_{n=0}^\infty p_n^{\xi,y}(\theta)z^n.$$
\paragraph{Related work.}
Our starting point is \cite{AR}, where explicit formulas are derived for the persistence probabilities of moving-average processes with Laplace innovations started from a fixed point. While \cite{AR} focuses on these persistence probabilities, the present work studies the stochastic process selected by long-time positivity conditioning. Here, we study the conditioned process, identify its Doob $h$-transform and invariant distribution, and derive sharp persistence asymptotics following the approach developed in \cite{AW}. A general framework for non-exit probabilities of Markov chains, including results for moving-average processes with uniform innovations, is developed in \cite{AMZ}.

The Doob $h$-transform originates in \cite{doob}; general accounts are given in \cite{RogersWilliams1,RogersWilliams2}, while related results on long-time conditioning and quasi-stationarity include \cite{ChampagnatVillemonais2016,ColletMartinezSanMartin,SenetaVereJones}. An easily accessible countable-state formulation appears in \cite[Theorem 2.11]{Swart}. Closely related transforms for random walks under positivity or survival constraints are studied in \cite{BertoinDoney,Caravenna2005,CaravennaChaumont2008,CaravennaChaumont2013,Wachtel1,Wachtel4,Wachtel6,Wachtel7,Wachtel5,Wachtel2,Wachtel3,EichelsbacherKonig}.

Persistence for moving-average processes has also been studied for uniform innovations \cite{AurzadaRaschel} and Gaussian innovations \cite{2,12}; an early contribution concerning stationary sequences of MA(1) type is \cite{15}. Related autoregressive models are treated in \cite{1,3,5,9,DonnartSimon,11,14}. For the broader statistical-physics perspective on persistence, see \cite{8}.

\paragraph{Outline.} Section~\ref{sec_preliminaries} collects probabilistic preliminaries on conditioned Markov chains and analytic properties of the $q$-exponential and $q$-trigonometric functions. Section~\ref{sec_theta_pos} treats the case $\theta>0$. We first derive generating-function representations valid for $\theta\ne1$ and then analyze the regimes $0<\theta<1$ and $\theta>1$ separately to obtain the persistence asymptotics, the corresponding Doob $h$-transforms, and the invariant distributions of the conditioned chains. Section~\ref{proofs_theta_neg} considers the case $\theta<0$, where $q$-trigonometric functions enter the spectral analysis, and proves parts (iii) of Theorems~\ref{maintheorem} and \ref{invariant_distribution}. Finally, Section~\ref{trivialcases} discusses parts (iv) and (v) as well as the remaining boundary cases not covered by Theorem~\ref{maintheorem}.

\section{Preliminaries}\label{sec_preliminaries}
\subsection{Auxiliary results on conditional Markov chains}
We start with a uniform upper bound on the persistence probabilities useful for applying the dominated convergence theorem in some instances below. The proof is straightforward. 

\begin{Lemma}\label{estimate_Px}
For $n\in\N$, it holds 
$p_n^{\xi,y}(\theta)\le p_{n-1}^{\xi}(\theta)$.
\end{Lemma}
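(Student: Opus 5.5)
The argument is a one-step comparison: drop the first constraint, which is the only one involving the starting value $y$, and use the i.i.d.\ structure.

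By definition,
\[
p_n^{\xi,y}(\theta)=\P(X_2>\theta y,\,X_3>\theta X_2,\dots,X_{n+1}>\theta X_n).
\]
Since the event on the right is contained in $\{X_3>\theta X_2,\dots,X_{n+1}>\theta X_n\}$, monotonicity of $\P$ gives
\[
p_n^{\xi,y}(\theta)\le \P(X_3>\theta X_2,\dots,X_{n+1}>\theta X_n).
\]
This upper bound no longer depends on $y$.

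Next we identify the upper bound. The vector $(X_2,\dots,X_{n+1})$ has the same law as $(X_1,\dots,X_n)$, because the $X_i$ are i.i.d. Hence
\[
\P(X_3>\theta X_2,\dots,X_{n+1}>\theta X_n)=\P(X_2>\theta X_1,\dots,X_n>\theta X_{n-1}),
\]
and by the definition of the persistence probabilities this equals $p_{n-1}^{\xi}(\theta)$ for $n\ge2$.

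For $n=1$ the intersection of constraints is empty after dropping the first one, so the bound reads $p_1^{\xi,y}(\theta)\le 1=p_0^\xi(\theta)$, which is consistent with the convention $p_0^\xi(\theta)=1$.

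The only point requiring care is matching the index shift to the convention for $p_0$. There is no real obstacle.
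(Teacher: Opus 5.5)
Your proof is correct: dropping the only $y$-dependent constraint $X_2>\theta y$ and using that $(X_2,\dots,X_{n+1})\overset{\d}{=}(X_1,\dots,X_n)$ gives exactly $p_{n-1}^{\xi}(\theta)$, and you handle the case $n=1$ correctly via the convention $p_0^{\xi}(\theta)=1$. The paper gives no proof beyond calling it straightforward, and your argument is essentially the intended one; it is equivalent to writing $p_n^{\xi,y}(\theta)=\int_{\{t>\theta y\}}p_{n-1}^{\xi,t}(\theta)\phi(t)\,\d t\le p_{n-1}^{\xi}(\theta)$ via the Markov property.
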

We continue with two lemmas that prove the eigenvalue equation \eqref{eq_eigenfunction} and the Doob $h$-transform of the reduced transition kernel when the asymptotics of the persistence probabilities is known.
\begin{Lemma}\label{eigenfunction_lemma}
  Assume that there is a continuous, non-negative function $h$ and constants $\lambda=\lambda(\theta,\xi)>0$ and $c(\theta,\xi),c_2(\theta,\xi)>0$ such that for  every $x\in\R$,
    \begin{align} \label{eqn:ppconvergence1and2}
        \lim_{n\to\infty}\frac{p_n^{\xi,x}(\theta)}{c(\theta,\xi)\lambda^{n+1}}= h(x)
   \quad\text{and}\quad
       \lim_{n\to\infty}\frac{p_n^{\xi}(\theta)}{c_2(\theta,\xi)\lambda^{n+1}}=1.
    \end{align}
 Then, for all $x\in\R$,
 \begin{align*}
     \lambda h(x)=\int_{\{y>\theta x\}}h(y)\phi(y)\d y.
 \end{align*}
\end{Lemma}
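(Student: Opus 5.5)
The proof rests on the one-step Markov decomposition of the persistence probability started at $x$. Conditioning on the first step $X_2=y$, whose density is $\phi$, we need $y>\theta x$ and then persistence for the remaining $n-1$ steps started from $y$. This gives, for $n\ge1$,
\begin{align*}
p_n^{\xi,x}(\theta)=\int_{\{y>\theta x\}} p_{n-1}^{\xi,y}(\theta)\,\phi(y)\,\d y .
\end{align*}
Next we divide both sides by $c(\theta,\xi)\lambda^{n+1}$. The left-hand side converges to $h(x)$ by the first limit in \eqref{eqn:ppconvergence1and2}. The right-hand side equals
\begin{align*}
\frac{1}{\lambda}\int_{\{y>\theta x\}} \frac{p_{n-1}^{\xi,y}(\theta)}{c(\theta,\xi)\lambda^{n}}\,\phi(y)\,\d y .
\end{align*}
Here the integrand converges pointwise to $h(y)\phi(y)$, again by the first limit in \eqref{eqn:ppconvergence1and2}, applied with $n-1$ in place of $n$.

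The only real step is justifying the exchange of limit and integral, and here we use Lemma~\ref{estimate_Px}. For $n\ge2$ it gives $p_{n-1}^{\xi,y}(\theta)\le p_{n-2}^{\xi}(\theta)$, uniformly in $y$. By the second limit in \eqref{eqn:ppconvergence1and2}, $p_{n-2}^{\xi}(\theta)\le 2c_2(\theta,\xi)\lambda^{n-1}$ for all $n$ large enough. Hence the integrand is bounded by
\begin{align*}
\frac{2c_2(\theta,\xi)}{c(\theta,\xi)\lambda}\,\phi(y),
\end{align*}
which is integrable because $\phi$ is a probability density. Dominated convergence then lets us pass to the limit under the integral, and we obtain $h(x)=\lambda^{-1}\int_{\{y>\theta x\}}h(y)\phi(y)\,\d y$. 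This is the claim.

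I expect no real obstacle. The one point needing care is the index shift: the domination must come from $p_{n-2}^{\xi}$, so the constants carry an extra factor $\lambda^{-1}$. Also, the bound from Lemma~\ref{estimate_Px} must be uniform in $y$, which it is. Continuity and non-negativity of $h$ are not needed beyond measurability, which follows from $h$ being a pointwise limit of measurable functions of $y$.
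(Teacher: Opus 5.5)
Your proof is correct and follows essentially the same route as the paper: the one-step Markov decomposition, pointwise convergence from the first limit, domination of the integrand via Lemma~\ref{estimate_Px} together with the second limit, and dominated convergence. You are in fact more explicit than the paper about the index shift and the resulting constant $2c_2(\theta,\xi)/(c(\theta,\xi)\lambda)$ in the majorant.
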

\begin{proof}
Using the Markov property,
  \begin{align*}
   \lambda h(x)&= \lim_{n\to\infty}\frac{p_n^{\xi,x}(\theta)}{c(\theta,\xi)\lambda ^n}
      =\lim_{n\to\infty}\frac{\P_x(M_1\in S,\dots,M_n\in S)}{c(\theta,\xi)\lambda ^n}
      \\&=\lim_{n\to\infty}\int_\R\frac{\P_y(M_1\in S,\dots,M_{n-1}\in S)}{c(\theta,\xi)\lambda ^n}\1_{\{(y,x)\in S\}}p(x,\d y)
      \\&=\lim_{n\to\infty}\int_{\{y>\theta x\}}\frac{\P_y(M_1\in S,\dots,M_{n-1}\in S)}{c(\theta,\xi)\lambda ^n}p(x,\d y).
    \end{align*}
    Using Lemma~\ref{estimate_Px} and both assumptions in (\ref{eqn:ppconvergence1and2}), one can find an integrable majorant and apply the dominated convergence theorem, so that the last expression equals
\begin{align*}
     &
     \int_{\{y>\theta x\}}\lim_{n\to\infty}\frac{\P_y(M_1\in S,\dots,M_{n-1}\in S)}{c(\theta,\xi)\lambda ^n}p(x,\d y)
      =\!\int_{\{y>\theta x\}} h(y)\phi(y)\d y.\tag*{\qedhere}
\end{align*}
\end{proof}
We next show that  asymptotics of the persistence probabilities yield convergence to the corresponding Doob $h$-transform.
\begin{Lemma}\label{Kernel_lim_process}
 Let $\theta\in\R$, $\xi\in[0,1]$. Assume that there is some non-negative, continuous function $h$ such that for all $x\in\R$, it holds
\begin{align}\label{assumption_asymptotics}
   \lim_{n\to\infty}\frac{p_n^{\xi,x}(\theta)}{c(\theta,\xi)\lambda^{n+1}}= h(x) \qquad\text{and}\qquad
        \lim_{n\to\infty}\frac{p_n^{\xi}(\theta)}{c_2(\theta,\xi)\lambda^{n+1}}=1
\end{align}
for some constants $\lambda=\lambda(\theta,\xi)>0$ and $c(\theta,\xi),c_2(\theta,\xi)>0$.
Moreover, let $\Omega=\R$ if $\theta\le 1$ and $\Omega=\R_{<0}$ if $\theta>1$ and assume that $h(x)>0$ for all $x\in\Omega$ and $h(x)=0$ for $x\notin\Omega$. Then, for $y\in\Omega$, a limiting process $M'$ defined in \eqref{def_lim_process}
exists, and $M'$ is a Markov chain on $\Omega$ with reduced transition kernel 
\begin{align} \label{eqn:tkrefk}
    p'(x_1,\d x_2)\coloneqq \mathds{1}_S(x_2,x_{1})\frac{h(x_2)}{h(x_1)}\frac{1}{\lambda}\phi(x_2)\d x_2,~~~~~~~~x_1,x_2\in \Omega.
\end{align}
\end{Lemma}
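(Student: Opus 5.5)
Fix $y\in\Omega$, $m\in\N$ and a bounded measurable $f:\R^{m}\to\R$. Since $M_k=(X_{k+1},X_k)$, the law of $(M_k)_{1\le k\le m}$ under $\P_y$ is determined by $(X_2,\dots,X_{m+1})$. It therefore suffices to show
\begin{align*}
\lim_{n\to\infty}\E_y\big[f(X_2,\dots,X_{m+1})\mid\Omega_n\big]=\int f(x_2,\dots,x_{m+1})\prod_{k=1}^{m}p'(x_k,\d x_{k+1}),\qquad x_1\coloneqq y.
\end{align*}
The plan is to use the Markov property at time $m$, rewrite the conditional expectation as a ratio of persistence probabilities, and pass to the limit with the first assumption in \eqref{assumption_asymptotics}, using dominated convergence exactly as in the proof of Lemma~\ref{eigenfunction_lemma}.

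\textbf{Step 1 (denominator).} For $y\in\Omega$ we have $\P_y(\Omega_n)=p_n^{\xi,y}(\theta)\sim c\lambda^{n+1}h(y)$ with $h(y)>0$. Hence $\P_y(\Omega_n)>0$ for large $n$, and the conditioning is well defined.

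\textbf{Step 2 (numerator).} For $n\ge m$, apply the Markov property at time $m$:
\begin{align*}
\E_y\big[f(X_2,\dots,X_{m+1});\Omega_n\big]=\int f(x_2,\dots,x_{m+1})\prod_{k=1}^{m}\mathds{1}_{\{x_{k+1}>\theta x_k\}}\phi(x_{k+1})\,p_{n-m}^{\xi,x_{m+1}}(\theta)\,\d x_2\cdots\d x_{m+1}.
\end{align*}
Divide by $c\lambda^{n+1}h(y)$. The integrand then converges pointwise to
\begin{align*}
f\prod_{k}\mathds{1}_{\{x_{k+1}>\theta x_k\}}\phi(x_{k+1})\,\frac{h(x_{m+1})}{\lambda^m h(y)}.
\end{align*}
To justify exchanging limit and integral, use Lemma~\ref{estimate_Px}: $p_{n-m}^{\xi,x}(\theta)\le p_{n-m-1}^{\xi}(\theta)$, and by the second assumption in \eqref{assumption_asymptotics} this is at most $C\lambda^{n-m}$ for large $n$. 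This gives a majorant of the form $\|f\|_\infty C'\prod\phi(x_{k+1})$, which is integrable. The limit therefore equals
\begin{align*}
\int f\prod_k \mathds{1}_S(x_{k+1},x_k)\phi(x_{k+1})\lambda^{-m}\frac{h(x_{m+1})}{h(y)}\,\d x_2\cdots\d x_{m+1}.
\end{align*}

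\textbf{Step 3 (identification).} The ratio $h(x_{m+1})/h(y)$ telescopes as $\prod_{k=1}^m h(x_{k+1})/h(x_k)$. This is legitimate wherever all the $x_k$ lie in $\Omega$.

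The main subtlety is intermediate points outside $\Omega$ when $\theta>1$. Suppose $x_k\notin\Omega$ for some $k\le m$. Then $h(x_k)=0$, so $p_{n}^{\xi,x_k}(\theta)=o(\lambda^n)$, and every path through $x_k$ contributes zero in the limit. To see this directly: for $\theta>1$, if $x_k>0$ then staying in $S$ forces $x_{k+1}>\theta x_k>0$, and by induction all later coordinates are positive, in particular $x_{m+1}>0$ and $h(x_{m+1})=0$. So on the support of the limiting integrand, $h(x_{m+1})>0$ forces every $x_k\in\Omega$. Restricting the integral to $\Omega^m$ then changes nothing, and there the telescoping is valid.

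The result is the product kernel $\prod_k p'(x_k,\d x_{k+1})$. Taking $f\equiv1$ gives total mass $1$, which is the statement $\int p'(x,\d z)=1$, i.e.\ the eigenvalue equation from Lemma~\ref{eigenfunction_lemma}. Hence $p'$ is a Markov kernel on $\Omega$.

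Finally, the limit measures are the finite-dimensional marginals of the chain started at $y$ with kernel $p'$. They are consistent across $m$ because $p'$ is stochastic, so the limiting process $M'$ exists and is the stated time-homogeneous Markov chain.
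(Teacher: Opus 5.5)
Your proposal is correct and follows essentially the same route as the paper. You use the Markov property at time $m$, then dominated convergence with the majorant from Lemma~\ref{estimate_Px} and the second assumption in \eqref{assumption_asymptotics}. For $\theta>1$ you observe that a path leaving $\R_{<0}$ never returns (note that $x_k=0$ also forces $x_{k+1}>0$), so the integral can be restricted to $\Omega^m$ and $h(x_{m+1})/h(y)$ telescoped; working with general bounded measurable $f$ and reading off $\int_\Omega p'(x,\d z)=1$ from $f\equiv1$ is a harmless tidying of the paper's appeal to the eigenvalue equation via Lemma~\ref{eigenfunction_lemma}.
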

\begin{proof} Recall that $M_i=(X_{i+1},X_i)$ and that we denote the reduced transition kernel of the Markov chain $(M_i)_{i\in\N}$ by $p(x_1,\d x_2)=\phi(x_2)\d x_2$ for $x_1, x_2\in\R$. Let $C_1,\dots, C_m\in\mathcal{B}(\R)$ and $\mathbf{x}\coloneqq(x_1,\dots,x_m)\in \R^m$ with $x_0\coloneqq y\in\Omega$. For fixed $n\ge m$, the Markov property yields
\begin{align*}
     &\lim_{n\to\infty}\P_{y}((M_k)_{1\leq k\leq m}\in (C_1\times\{y\})\times\dots\times (C_m\times C_{m-1})\mid \Omega_n)   \\&=\lim_{n\to\infty}\int_{\R^m}\1_{C_1\times\dots\times C_m}(\mathbf{x})\left[\prod_{i=1}^m p(x_{i-1},\d x_i)\mathds{1}_S(x_i,x_{i-1})\right]\frac{\P_{x_m}(\Omega_{n-m})}{\P_{x_0}(\Omega_n)} .
\end{align*}
Using Lemma~\ref{estimate_Px} and both assumptions in (\ref{assumption_asymptotics}), we can find an integrable majorant and apply the dominated convergence theorem, so that the last expression equals
\begin{align}
    &
    \int_{\R^m}\1_{C_1\times\dots\times C_m}(\mathbf{x})\left[\prod_{i=1}^m p(x_{i-1},\d x_i)\mathds{1}_S(x_i,x_{i-1})\right]\lim_{n\to\infty}\frac{\P_{x_m}(\Omega_{n-m})}{\P_{x_0}(\Omega_n)}  \notag
    \\&=\int_{\R^m}\1_{C_1\times\dots\times C_m}(\mathbf{x})\left[\prod_{i=1}^m p(x_{i-1},\d x_i)\mathds{1}_S(x_i,x_{i-1})\right]\frac{h(x_m)}{h(x_0)\lambda^m}.\notag
\end{align}
Note that there is only one case where $\Omega\ne \R$, which is $\theta>1$ where $\Omega=\R_{<0}$. Let us examine this case. If there is some $1\le i\le m$ with  $\mathds{1}_S(x_i,x_{i-1})=0$, the integrand is zero. Assume that $\mathds{1}_S(x_i,x_{i-1})=1$ for all $1\le i\le m$. We started with $x_0\in\Omega$, so $h(x_0)>0$, and if the path leaves $\Omega$, define $j\coloneqq\min\{1\leq i\leq m:x_i\notin\Omega\}$. Then, we have $x_{k}\ge0$ for all $j\le k\le m$, so the rest of the path $x_{j},\dots x_m$ is outside of $\Omega$ as well. In particular, $h(x_m)=0$, which coincides with the fact that if a path leaves $\Omega$, it stays outside of $\Omega$ and has zero mass in the limiting distribution. Therefore, even if $\Omega\neq \R$, we can restrict the integration domain above to $\Omega^m$ and get
\begin{align}
    &\lim_{n\to\infty}\P_{y}((M_k)_{1\leq k\leq m}\in (C_1\times\{y\})\times\dots\times (C_m\times C_{m-1})\mid \Omega_n)\notag
    \\&=\int_{\Omega^m}\1_{C_1\times\dots\times C_m}(\mathbf{x})\left[\prod_{i=1}^m p(x_{i-1},\d x_i)\mathds{1}_S(x_i,x_{i-1})\frac{h(x_i)}{h(x_{i-1})}\frac{1}{\lambda}\right].\notag
\end{align}
Therefore, the reduced transition kernel of the limiting process $M'$ is given by (\ref{eqn:tkrefk}).
The fact that $\int_\Omega p'(x_1,\d x_2) =1$ follows from \eqref{eq_eigenfunction}, by Lemma~\ref{eigenfunction_lemma}.
\end{proof}

Using the transition kernel from the last lemma, we can now identify the invariant distribution.
\begin{Lemma}\label{invariant_distribution_limiting_process} 
Let $\Omega\in\{\R,\R_{<0}\}$, $\theta\in\R$, $\lambda\in\R_{>0}$ and $\xi\in[0,1]$ and assume that there is some measurable, non-negative function $h$ that is positive on $\Omega$ such that
\begin{align*}
    p'(x_1,\d x_2)\coloneqq \mathds{1}_S(x_2,x_{1})\frac{h(x_2)}{h(x_1)}\frac{1}{\lambda}\phi(x_2)\d x_2,~~~~~~~~x_1,x_2\in \Omega
\end{align*}
is a Markov kernel. Further, assume that there exists a measurable function $\tilde h:\Omega\to[0,\infty)$ satisfying the left eigenfunction equation \begin{align}\label{eq_left_eigenfunction} 
\lambda \tilde h(y)=\int_{\Omega\cap\{y>\theta x\}}\tilde h(x)\phi(x)\d x,\qquad y\in\Omega,
\end{align} 
and
\begin{align*} 
0< I\coloneqq \int_\Omega h(x)\tilde h(x)\phi(x)\d x <\infty. 
\end{align*} 
Then the probability measure 
\begin{align*} 
\pi(\d x) \coloneqq \frac{h(x)\tilde h(x)}{I} \phi(x)\d x,\qquad x\in\Omega,
\end{align*} 
is invariant for $p'$, that is, for Borel sets $B\subseteq\Omega$
\begin{align*} 
\int_\Omega \pi(\d x_1)p'(x_1,B) = \pi(B). 
\end{align*} 
\end{Lemma}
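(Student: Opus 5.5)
We verify invariance directly by computing the integral and applying Fubini–Tonelli. All integrands involved are non-negative: $h$, $\tilde h$ and $\phi$ are non-negative, and $h>0$ on $\Omega$. Hence exchanging the order of integration needs no further justification. The identity to check is
$$\int_\Omega \pi(\d x_1)\,p'(x_1,B)=\pi(B)\quad\text{for every Borel } B\subseteq\Omega.$$

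First we insert the definitions of $\pi$ and $p'$. Since $h(x_1)>0$ for $x_1\in\Omega$, the factor $h(x_1)$ in the density of $\pi$ cancels the $h(x_1)$ in the denominator of $p'$. This gives
\begin{align*}
\int_\Omega \pi(\d x_1)p'(x_1,B)=\frac{1}{I\lambda}\int_\Omega\int_{B}\mathds{1}_{\{x_2>\theta x_1\}}\,\tilde h(x_1)\phi(x_1)\,h(x_2)\phi(x_2)\,\d x_2\,\d x_1 .
\end{align*}
Next we apply Tonelli to integrate over $x_1$ first, with $x_2\in B\subseteq\Omega$ held fixed. The inner integral is then
$$\int_{\Omega\cap\{x_2>\theta x_1\}}\tilde h(x_1)\phi(x_1)\,\d x_1 .$$
By the left eigenfunction equation \eqref{eq_left_eigenfunction}, applied with $y=x_2$, this inner integral equals $\lambda\tilde h(x_2)$.

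Substituting this back, the factor $\lambda$ cancels and we are left with
$$\frac{1}{I}\int_B h(x_2)\tilde h(x_2)\phi(x_2)\,\d x_2=\pi(B).$$
This is exactly the claimed invariance.

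Finally, $\pi$ is a genuine probability measure on $\Omega$: its density is non-negative, and its total mass is $I/I=1$ because $0<I<\infty$.

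The only point that needs any care is the role of the domain $\Omega$. The outer integral runs only over $x_1\in\Omega$, which is precisely why the left eigenfunction equation is stated with the integration domain $\Omega\cap\{y>\theta x\}$ rather than $\{y>\theta x\}$. Because of this, no contribution from $x_1\notin\Omega$ is lost or needs to be accounted for. Beyond this bookkeeping, the argument presents no real obstacle.
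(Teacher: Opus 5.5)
Your proposal is correct and follows the paper's proof essentially verbatim. You cancel $h(x_1)$ using positivity on $\Omega$, exchange the order of integration by Tonelli, and apply the left eigenfunction equation at $y=x_2$ to recover $\pi(B)$.
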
 
\begin{proof} 
By Tonelli's theorem and the definition of $p'$, we obtain 
\begin{align*} 
\int_\Omega \pi(\d x_1)p'(x_1,B) 
&= \int_\Omega \frac{h(x_1)\tilde h(x_1)}{I} \left( \int_B \mathds{1}_S(x_2,x_1) \frac{h(x_2)}{\lambda h(x_1)} \phi(x_2)\d x_2 \right) \phi(x_1)\d x_1 
\\ &= \frac{1}{I\lambda} \int_B h(x_2) \left( \int_\Omega \mathds{1}_{\{x_2>\theta x_1\}} \tilde h(x_1) \phi(x_1)\d x_1 \right) \phi(x_2)\d x_2. 
\end{align*} 
Using the left eigenfunction equation \eqref{eq_left_eigenfunction}, this equals
\begin{align*} 
& \frac{1}{I\lambda} \int_B h(x_2)\lambda\tilde h(x_2) \phi(x_2)\d x_2
= \frac{1}{I} \int_B h(x_2)\tilde h(x_2) \phi(x_2)\d x_2 
= \pi(B). \tag*{\qedhere}
\end{align*} 
\end{proof}
The last lemma of this section connects the representation of a power series to the asymptotic behavior of its coefficients. The proof is standard.
\begin{Lemma}\label{asymptotic_powerseries}
    Let $(p_n)$ be a sequence of real numbers. Let $z_0>0$. Assume that
\begin{equation}
\sum_{n=0}^\infty p_n z^n = \frac{f(z)}{z_0-z},
\end{equation}
for complex $|z| < z_0$. Further assume that $f$ is analytic on $|z|<z_1$ with $z_0<z_1$ and that $f(z_0)\neq 0$. Then
$$
p_n \sim \frac{f(z_0)}{z_0} \cdot z_0^{-n},\qquad \text{as $n\to\infty$.}
$$
\end{Lemma}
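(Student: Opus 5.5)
The statement to prove is Lemma~\ref{asymptotic_powerseries}. I would reduce it to a geometric series plus a remainder whose coefficients decay faster than $z_0^{-n}$. Write
\begin{align*}
\frac{f(z)}{z_0-z}=\frac{f(z_0)}{z_0-z}+\frac{f(z)-f(z_0)}{z_0-z}\eqqcolon\frac{f(z_0)}{z_0-z}+g(z).
\end{align*}
The first term expands for $|z|<z_0$ as $\frac{f(z_0)}{z_0}\sum_{n\ge0}z_0^{-n}z^n$. Its coefficients are exactly the claimed main term $\frac{f(z_0)}{z_0}z_0^{-n}$.

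Next I would show that $g$ extends analytically to the disc $|z|<z_1$. Since $f$ is analytic at $z_0$, the difference quotient $(f(z)-f(z_0))/(z_0-z)$ has a removable singularity at $z=z_0$; it equals $-f'(z_0)$ there. Away from $z_0$ it is a quotient of analytic functions with nonvanishing denominator. Hence $g$ is analytic on $|z|<z_1$.

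Writing $g(z)=\sum_n g_nz^n$, we get $p_n=\frac{f(z_0)}{z_0}z_0^{-n}+g_n$ for all $n$, by uniqueness of power series coefficients on $|z|<z_0$. Now fix $r$ with $z_0<r<z_1$. Cauchy's estimate on the circle $|z|=r$ gives $|g_n|\le \max_{|z|=r}|g(z)|\,r^{-n}$. Therefore
\begin{align*}
\frac{g_n}{z_0^{-n}}=O\bigl((z_0/r)^n\bigr)\to0.
\end{align*}

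Finally, because $f(z_0)\neq0$, the main term is nonzero. Dividing by it yields $p_n\big/\bigl(\tfrac{f(z_0)}{z_0}z_0^{-n}\bigr)\to1$, which is the asserted asymptotics.

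The only point requiring any care is the analyticity of $g$ on the full disc of radius $z_1$, namely the removable singularity at $z_0$. This is immediate from the Taylor expansion of $f$ about $z_0$, so I expect no real obstacle; the rest is routine.
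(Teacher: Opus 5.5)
Your proof is correct. The paper gives no proof beyond calling it standard, and your argument is exactly that standard argument: subtract the simple pole $\frac{f(z_0)}{z_0-z}$, observe that $\frac{f(z)-f(z_0)}{z_0-z}$ extends analytically to $|z|<z_1$, and bound its coefficients by Cauchy's estimate on a circle of radius $r\in(z_0,z_1)$.
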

\subsection{Auxiliary results on q-exponential and q-trigonometric functions} \label{sec:auxiliaryqexpetc}
We next collect the analytic properties of the $q$-exponential and $q$-trigonometric functions needed to locate the singularities of the generating functions. Let us start with a representation of $e_\theta$ for $\theta\in(0,1)$.
\begin{Lemma}\label{e_theta_singularities_thetale1}
   For $0<\theta< 1$, the power series defining $e_\theta$ has
radius of convergence $1/(1-\theta)$. For all $z\in\C$ outside its poles, the meromorphic continuation is given by
   \begin{align}\label{e_theta_prod_thetale1}
       e_\theta(z)=\frac{1}{(z(1-\theta);\theta)_\infty}=\prod_{n=0}^\infty\frac{1}{1-z(1-\theta)\theta^n}.
   \end{align}
   Moreover,  the function $z\mapsto e_\theta(z)$ has no zeros and has simple poles precisely at $\left(\frac{1}{(1-\theta)\theta^n}\right)_{n\in\N_0}$. In particular, its poles are real, positive, simple and isolated. The function is strictly positive on the real interval $\left(-\infty,\frac{1}{1-\theta}\right)$.
\end{Lemma}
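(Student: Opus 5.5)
The plan is to prove the product formula \eqref{e_theta_prod_thetale1} first and read off every other claim from it.

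\emph{Radius of convergence.} Since $[n]_\theta!=(\theta;\theta)_n/(1-\theta)^n$, the series can be written as
\begin{align*}
e_\theta(z)=\sum_{n\ge0}\frac{(z(1-\theta))^n}{(\theta;\theta)_n}.
\end{align*}
The ratio of consecutive coefficients is $(1-\theta)/(1-\theta^{n+1})\to1-\theta$. By the ratio test the radius of convergence is $1/(1-\theta)$.

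\emph{Product formula.} Set $w\coloneqq z(1-\theta)$. By Euler's identity (the $q$-binomial theorem with $a=0$),
\begin{align*}
\sum_{n\ge0}\frac{w^n}{(\theta;\theta)_n}=\frac{1}{(w;\theta)_\infty},\qquad |w|<1.
\end{align*}
If the paper should be self-contained here, I would derive this identity directly. Put $F(w)\coloneqq 1/(w;\theta)_\infty$. Then $F(w)=(1-w)^{-1}F(\theta w)$, so $F(w)-F(\theta w)=wF(w)$. Comparing coefficients $a_n$ gives $a_n(1-\theta^n)=a_{n-1}$ with $a_0=1$, hence $a_n=1/(\theta;\theta)_n$.

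\emph{Meromorphic continuation and poles.} Since $\sum_n\theta^n<\infty$, the product $(w;\theta)_\infty$ converges locally uniformly in $w\in\C$. It is therefore an entire function of $z$. Its zeros are exactly the zeros of the individual factors, namely $z=1/((1-\theta)\theta^n)$, $n\in\N_0$. These points are distinct because $\theta\in(0,1)$, so each zero is simple. Moreover, they increase to infinity and hence are isolated. It follows that $1/(z(1-\theta);\theta)_\infty$ is meromorphic on $\C$ and agrees with the series on the disc $|z|<1/(1-\theta)$. By the identity theorem it is the meromorphic continuation, and its poles are precisely these simple, real, positive points. It has no zeros, since it is the reciprocal of an entire function.

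\emph{Positivity.} Let $z<1/(1-\theta)$ be real. Then $z(1-\theta)\theta^n<\theta^n\le1$ for every $n$, so each factor $1-z(1-\theta)\theta^n$ is positive. The infinite product converges to a nonzero limit because its factors tend to $1$ summably, so the product is strictly positive. Hence $e_\theta$ is strictly positive on $(-\infty,1/(1-\theta))$.

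\emph{Main obstacle.} The only nontrivial input is Euler's identity, which is handled by the functional-equation argument above. Everything else is standard manipulation of infinite products.
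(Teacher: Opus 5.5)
Your proposal is correct and follows the paper's route. You rewrite $[n]_\theta!$ via $(\theta;\theta)_n$, apply Euler's $q$-binomial identity, and read off the continuation, the simple poles, the absence of zeros and the positivity from the product; you also supply details the paper leaves implicit, namely the ratio test, the functional-equation proof of Euler's identity, and the identity-theorem step.
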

\begin{proof}
The proof follows from the representation
    \begin{align*}
        e_{\theta}(z)
        &= \sum_{n=0}^\infty \frac{z^{n}}{[n]_{\theta}!}=\sum_{n=0}^\infty\frac{z^n(1-\theta)^n}{(\theta;\theta)_n}=\frac{1}{(z(1-\theta);\theta)_\infty}=\prod_{n=0}^\infty\frac{1}{1-z(1-\theta)\theta^n},
    \end{align*}
    which is due to the $q$-binomial theorem and is valid for all $|z|<1/(1-\theta)$.
\end{proof}
In the following, set $\bar\theta\coloneqq\theta^{-1}$. 
\begin{Lemma}\label{e_theta_entire_thetage1}
    For $\theta>1$, the function $z\mapsto e_\theta(z)$ is entire and, for all $z\in\C$, it holds
    \begin{align}\label{e_theta_prod_thetage1}
        e_\theta(z)=\sum_{n=0}^\infty\frac{\bar\theta^{\frac{n(n-1)}{2}}((1-\bar\theta)z)^n}{(\bar\theta;\bar\theta)_n}=(-z(1-\bar\theta);\bar\theta)_\infty=\prod_{k=0}^\infty(1+z(1-\bar\theta)\bar\theta^k),
    \end{align}
    and its zeros are given by the sequence $\left(\frac{-1}{(1-\bar\theta)\bar\theta^n}\right)_{n\in\N_0}$.  In particular, its zeros are real, negative, simple and isolated.
\end{Lemma}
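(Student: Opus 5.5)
The plan is to reduce everything to the classical Euler identity for the $q$-Pochhammer symbol with base $\bar\theta=\theta^{-1}\in(0,1)$. This mirrors the proof of Lemma~\ref{e_theta_singularities_thetale1}, where the $q$-binomial theorem was used with base $\theta<1$.

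\textbf{Step 1: rewrite the $q$-factorial.} For $\theta>1$ I would write
\[
[k]_\theta=\frac{\theta^k-1}{\theta-1}=\theta^{k-1}\,\frac{1-\bar\theta^{k}}{1-\bar\theta}.
\]
Taking the product over $k=1,\dots,n$ gives
\[
[n]_\theta!=\theta^{\frac{n(n-1)}{2}}\,\frac{(\bar\theta;\bar\theta)_n}{(1-\bar\theta)^n},
\]
since $\sum_{k=1}^n (k-1)=n(n-1)/2$. Substituting this into the series defining $e_\theta$ yields the first equality in \eqref{e_theta_prod_thetage1} term by term.

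\textbf{Step 2: the function is entire.} The coefficients satisfy
\[
\bigl|\bar\theta^{n(n-1)/2}(1-\bar\theta)^n/(\bar\theta;\bar\theta)_n\bigr|\le \bar\theta^{n(n-1)/2}(1-\bar\theta)^n/(\bar\theta;\bar\theta)_\infty,
\]
because $(\bar\theta;\bar\theta)_n\ge(\bar\theta;\bar\theta)_\infty>0$. The $n$-th root of this bound tends to $0$, so the radius of convergence is infinite and $e_\theta$ is entire.

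\textbf{Step 3: the product formula.} Next I invoke Euler's identity
\[
(-a;q)_\infty=\sum_{n\ge0}\frac{q^{n(n-1)/2}a^n}{(q;q)_n},\qquad 0<q<1,\ a\in\C,
\]
which is the limiting case of the $q$-binomial theorem. I apply it with $q=\bar\theta$ and $a=(1-\bar\theta)z$. If one prefers not to quote it for all $a$, a fallback is available. The product $\prod_k(1+a q^k)$ converges absolutely and locally uniformly because $\sum_k |a|q^k<\infty$, so it is entire in $a$. It satisfies the functional equation $F(a)=(1+a)F(qa)$, and comparing coefficients $c_n$ of $F$ gives $c_n(1-q^n)=q^{n-1}c_{n-1}$ with $c_0=1$. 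This recursion produces exactly the coefficients above, and the identity theorem then extends the equality to all of $\C$.

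\textbf{Step 4: the zeros.} A locally uniformly absolutely convergent infinite product vanishes only where one of its factors vanishes. The factor $1+z(1-\bar\theta)\bar\theta^k$ vanishes exactly at $z_k=-1/((1-\bar\theta)\bar\theta^k)$. These points are real and negative, strictly decreasing in $k$ (hence pairwise distinct), and tend to $-\infty$, so they are isolated. Each zero is simple: at $z_k$ the $k$-th factor is linear with a simple zero, and the remaining product does not vanish there because it has no zero at $z_k$ (the $z_j$ are distinct).

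The main point needing care is Step 3, namely justifying Euler's identity for all complex $z$ rather than in a disc. The functional-equation argument above handles this cleanly and involves no real obstacle.
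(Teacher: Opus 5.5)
Your proof is correct. Steps 1 and 2 match the paper's argument: it also rewrites $[n]_\theta!=\theta^{n(n-1)/2}(\bar\theta;\bar\theta)_n/(1-\bar\theta)^n$ and uses $(\bar\theta;\bar\theta)_n\to(\bar\theta;\bar\theta)_\infty>0$ with Cauchy--Hadamard to conclude that the series is entire.

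The difference is in how you get the product formula. The paper does not invoke Euler's second identity. Instead it uses the reciprocity $e_\theta(z)=1/e_{\bar\theta}(-z)$ and inserts the product $e_{\bar\theta}(w)=1/(w(1-\bar\theta);\bar\theta)_\infty$ already proved in Lemma~\ref{e_theta_singularities_thetale1} (the $q$-binomial theorem with base $\bar\theta<1$). It then reads off the zeros from the product without further comment. The paper's route is shorter, and it highlights the duality $e_\theta\leftrightarrow 1/e_{\bar\theta}(-\,\cdot)$, which is reused in Lemma~\ref{estimate_e_theta_ge_1_prelim} and Corollaries~\ref{estimate_pos_e_prime_e} and~\ref{arctan_cor}. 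However, the paper quotes that reciprocity identity without proof.

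Your functional-equation argument is self-contained: from $F(a)=(1+a)F(\bar\theta a)$ you get $c_n(1-\bar\theta^n)=\bar\theta^{n-1}c_{n-1}$ with $c_0=1$. Combined with Lemma~\ref{e_theta_singularities_thetale1}, it actually proves the identity $e_\theta(z)e_{\bar\theta}(-z)=1$ rather than assuming it. Your Step 4 also spells out the distinctness and simplicity of the zeros, which the paper leaves implicit.
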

\begin{proof}
We have
\begin{align*}
    [n]_\theta!
    =\prod_{k=1}^n\theta^{k-1}\frac{1-\bar\theta^k}{1-\bar\theta}
    =\prod_{k=1}^n\theta^{k-1}[k]_{\bar\theta}
    =\theta^{\frac{n(n-1)}{2}}[n]_{\bar\theta}!
    =\theta^{\frac{n(n-1)}{2}}\frac{(\bar\theta;\bar\theta)_n}{(1-\bar\theta)^n},
\end{align*}
so that
\begin{align*}
    e_{\theta}(z)
        &= \sum_{n=0}^\infty \frac{z^{n}}{[n]_{\theta}!}
        =\sum_{n=0}^\infty\frac{\bar\theta^{\frac{n(n-1)}{2}}((1-\bar\theta)z)^n}{(\bar\theta;\bar\theta)_n}.
\end{align*}
The right-hand side is an entire function in $z$ since $\lim_{n\to\infty}(\bar\theta;\bar\theta)_n=(\bar\theta;\bar\theta)_\infty>0$ and $\bar\theta<1$.
By Cauchy-Hadamard, the power series thus has radius of convergence $R=\infty$. Moreover, using the identity $e_\theta(z)=\frac{1}{e_{\bar\theta}(-z)}$ and Lemma~\ref{e_theta_singularities_thetale1}, we obtain (\ref{e_theta_prod_thetage1}).
\end{proof}
The following comparison will be used to exclude non-real zeros of the relevant denominators.
\begin{Lemma}\label{estimate_e_theta_ge_1_prelim}
    For $\theta>0$ and $z\in\C$ outside the poles in case $0<\theta<1$, it holds
    \begin{align*}
        |e_\theta(z)|&>|e_\theta(-z)|,~~~\Re(z)>0,
        \qquad \text{and}\qquad
        |e_\theta(z)|<|e_\theta(-z)|,~~~\Re(z)<0.
    \end{align*}
\end{Lemma}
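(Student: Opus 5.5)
The natural route is through the product representations of $e_\theta$ in Lemma~\ref{e_theta_singularities_thetale1} (for $0<\theta<1$) and Lemma~\ref{e_theta_entire_thetage1} (for $\theta>1$). These reduce the comparison of $|e_\theta(z)|$ and $|e_\theta(-z)|$ to a factor-by-factor comparison. The case $\theta=1$ is classical: $|e^z|=e^{\Re z}$ and $|e^{-z}|=e^{-\Re z}$, so the claim is immediate.

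\emph{Case $\theta>1$.} By Lemma~\ref{e_theta_entire_thetage1},
\begin{align*}
\frac{|e_\theta(z)|^2}{|e_\theta(-z)|^2}=\prod_{k=0}^\infty\frac{|1+c_kz|^2}{|1-c_kz|^2},\qquad c_k\coloneqq(1-\bar\theta)\bar\theta^k>0.
\end{align*}
Each factor satisfies
\begin{align*}
|1+c_kz|^2-|1-c_kz|^2=4c_k\Re(z).
\end{align*}
So every factor is $>1$ when $\Re z>0$ and $<1$ when $\Re z<0$. Each factor in the two products is nonzero: for $\Re z\neq 0$, $1\pm c_kz$ cannot vanish because its real part is $1\pm c_k\Re z$ and its imaginary part is $\pm c_k\Im z$, and vanishing would force $z$ to be real with $z=\mp1/c_k$. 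A zero of $e_\theta(-z)$ would therefore need $z=1/c_k>0$; in that case $|e_\theta(z)|>0=|e_\theta(-z)|$ and the strict inequality still holds. Analogously, a zero of $e_\theta(z)$ occurs only for real negative $z$. Away from these points, both infinite products converge absolutely since $\sum c_k<\infty$. Their quotient is a convergent product of factors all strictly on the same side of $1$, so the quotient is strictly $>1$ (respectively $<1$). Strictness survives the limit because already the first factor is strict and the remaining partial products are monotone.

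\emph{Case $0<\theta<1$.} By Lemma~\ref{e_theta_singularities_thetale1},
\begin{align*}
\frac{|e_\theta(z)|}{|e_\theta(-z)|}=\prod_{n=0}^\infty\frac{|1+d_nz|}{|1-d_nz|},\qquad d_n\coloneqq(1-\theta)\theta^n>0.
\end{align*}
The same identity $|1+d_nz|^2-|1-d_nz|^2=4d_n\Re z$ gives the claim, now with the roles of the products reversed but the same conclusion. The quotient is valid for $z$ outside the poles of both $e_\theta(z)$ and $e_\theta(-z)$. A pole of $e_\theta(-z)$ sits at a negative real $z$, and there the inequality $|e_\theta(z)|<|e_\theta(-z)|=\infty$ is trivially true, which is consistent with the statement's exclusion of poles.

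The only delicate step is justifying that the strict factorwise inequality passes to the infinite product, and handling zeros and poles. Absolute convergence of $\sum d_n$ and $\sum c_k$ settles the first point. The explicit real location of zeros and poles from the two lemmas settles the second, since for $\Re z\ne0$ no factor vanishes except at the real points already treated.
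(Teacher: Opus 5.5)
Your proof is correct and follows the paper's argument: both rest on the factorwise identity $|1+cz|^2-|1-cz|^2=4c\,\Re(z)$ applied to the product representation of Lemma~\ref{e_theta_entire_thetage1}. The differences are minor. For $0<\theta<1$ the paper reduces to $\theta>1$ via $e_\theta(z)=1/e_{\bar\theta}(-z)$, while you use the product from Lemma~\ref{e_theta_singularities_thetale1} directly, which amounts to the same thing. You also spell out why strictness survives the infinite product and what happens at the real zeros and poles, which the paper leaves implicit.
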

\begin{proof}
For $\theta=1$, the result follows from known properties of the standard exponential function, so consider $\theta\ne 1$.
  Let $z=x+iy$. 
  First, consider $\theta >1$. We use the product representation from \eqref{e_theta_prod_thetage1} to obtain
  \begin{align*}
       |e_{\theta}(\pm z)|^2&=\prod_{k=0}^\infty((1 \pm x(1-\bar\theta)\bar\theta^k)^2+(y(1-\bar\theta)\bar\theta^k)^2).
  \end{align*}
  Now, for each $k\in\N_0$, we have
  \begin{align*}
      (1+x(1-\bar\theta)\bar\theta^k)^2+(y(1-\bar\theta)\bar\theta^k)^2-((1-x(1-\bar\theta)\bar\theta^k)^2+(y(1-\bar\theta)\bar\theta^k)^2)=4x(1-\bar\theta)\bar\theta^k,
  \end{align*}
  which is strictly positive for $x>0$ and strictly negative for $x<0$. For $0<\theta<1$, using the identity $e_\theta(z)=\frac{1}{e_{\bar\theta}(-z)}$ and the assertion for $\theta>1$, the claim follows. 
\end{proof}
\begin{Corollary}\label{estimate_pos_e_prime_e}
    For $\theta>0$, $\theta\ne 1$ and $z\in\R_{>0}$, the quantity $\frac{e'_{\theta} (iz)}{e_{\theta} (iz)}+\frac{e'_{\theta}(-iz)}{e_{\theta}(-iz)}$ is real and positive.
\end{Corollary}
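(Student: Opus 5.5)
We need to show that for $\theta>0$, $\theta\neq1$, and $z>0$, the quantity $\frac{e'_\theta(iz)}{e_\theta(iz)}+\frac{e'_\theta(-iz)}{e_\theta(-iz)}$ is real and positive. The plan is to use the product representations in Lemma~\ref{e_theta_singularities_thetale1} and Lemma~\ref{e_theta_entire_thetage1}, and to differentiate them logarithmically.

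\emph{Reality.} The coefficients of the power series of $e_\theta$ are real, so $e_\theta(\bar w)=\overline{e_\theta(w)}$. The same holds for $e'_\theta$. Hence the second summand is the complex conjugate of the first, and the sum equals $2\Re\frac{e'_\theta(iz)}{e_\theta(iz)}$.

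The points $\pm iz$ are neither zeros nor poles. For $0<\theta<1$ the poles are real and positive, and for $\theta>1$ the zeros are real and negative. So the logarithmic derivative is well defined.

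\emph{Case $\theta>1$.} Set $c_k\coloneqq(1-\bar\theta)\bar\theta^k>0$. Locally uniform convergence of the product \eqref{e_theta_prod_thetage1} lets us differentiate termwise:
\begin{align*}
\frac{e'_\theta(w)}{e_\theta(w)}=\sum_{k=0}^\infty\frac{c_k}{1+c_kw}.
\end{align*}
The series converges absolutely away from the zeros, since $\sum c_k<\infty$. At $w=iz$ each term has real part
\begin{align*}
\Re\frac{c_k}{1+ic_kz}=\frac{c_k}{1+c_k^2z^2}>0.
\end{align*}
Therefore the sum equals $2\sum_k\frac{c_k}{1+c_k^2z^2}>0$.

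\emph{Case $0<\theta<1$.} Set $d_n\coloneqq(1-\theta)\theta^n>0$. From \eqref{e_theta_prod_thetale1},
\begin{align*}
\frac{e'_\theta(w)}{e_\theta(w)}=\sum_{n=0}^\infty\frac{d_n}{1-d_nw},
\end{align*}
valid off the poles by the same justification. Each term at $w=iz$ has real part $\frac{d_n}{1+d_n^2z^2}>0$, which again gives positivity.

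The only mildly delicate step is justifying termwise logarithmic differentiation of the infinite products. This follows from uniform convergence of $\sum\log(1\pm c_kw)$ on compacta avoiding the zeros and poles, because $\sum c_k<\infty$. Everything else is direct computation.

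A tempting alternative is to deduce the result from Lemma~\ref{estimate_e_theta_ge_1_prelim} by differentiating $\log|e_\theta(x+iz)|^2-\log|e_\theta(-x+iz)|^2$ at $x=0$. That only yields nonnegativity. The series argument above is cleaner and gives strict positivity.
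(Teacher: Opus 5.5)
Your proof is correct and follows the paper's argument: you log-differentiate the product representation \eqref{e_theta_prod_thetage1} termwise, and the terms at $\pm iz$ combine to $2\sum_k c_k/(1+c_k^2z^2)>0$. The only cosmetic difference is for $0<\theta<1$: you differentiate the product \eqref{e_theta_prod_thetale1} directly, whereas the paper reduces to the case $\bar\theta>1$ via $e_\theta(w)=1/e_{\bar\theta}(-w)$, which yields exactly the same series with coefficients $d_n=(1-\theta)\theta^n$.
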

\begin{proof}
The quotients are well-defined since $e_\theta$ is holomorphic and non-zero in $\pm iz$: For $\theta>1$, by Lemma~\ref{e_theta_entire_thetage1}, $e_\theta$ is entire and its zeros are real, so $iz\notin \R$ is not a zero. For $0<\theta<1$, by Lemma~\ref{e_theta_singularities_thetale1}, $e_\theta$ is meromorphic with no zeros and only real singularities, so $iz\notin\R$ is neither a zero nor a singularity. It remains to show that the quantity is real and positive.
    First, let $\theta >1$.
    By \eqref{e_theta_prod_thetage1}, $ e_\theta(w)=\prod_{k=0}^\infty(1+(1-\bar\theta)\bar\theta^kw)$.
    Since $\sum_{k=0}^\infty (1-\bar\theta)\bar\theta^k<\infty$, the logarithmic derivatives
    of the partial products converge locally uniformly away from the
    zeros of $e_\theta$. Consequently,
    \begin{align*}
        \frac{e_\theta'(w)}{e_\theta(w)}=(\log e_\theta)'(w)=\sum_{k=0}^\infty\frac{(1-\bar\theta)\bar\theta^k}{1+(1-\bar\theta)\bar\theta^kw}.
    \end{align*}
    Thus, for $z>0$,
    \begin{align*}
        \frac{e_\theta'(iz)}{e_\theta(iz)}
        +\frac{e_\theta'(-iz)}{e_\theta(-iz)}
        &=
        2\sum_{k=0}^\infty
        \frac{(1-\bar\theta)\bar\theta^k}{1+((1-\bar\theta)\bar\theta^k)^2z^2}
        >0.
    \end{align*}
In particular, the quantity is real. For $0<\theta<1$, we can use the equality $e_\theta(w)=\frac{1}{e_{\bar\theta}(-w)}$ to derive the result.
\end{proof}
The next three corollaries follow directly from the representations
\begin{align}
     e_\theta(iz)&=\cos_\theta(z)+i\sin_\theta(z),\notag
        \\ \sin_\theta(z)&=\frac{e_\theta(iz)-e_\theta(-iz)}{2i},\qquad\text{ and}\qquad \cos_\theta(z)=\frac{e_\theta(iz)+e_\theta(-iz)}{2},\label{sin_sum_e}
     \end{align}
     together with Lemma~\ref{e_theta_singularities_thetale1} and Lemma~\ref{e_theta_entire_thetage1}.
\begin{Corollary}\label{sin_and_cos_prelim_1}
    For $0<\theta<1$, the functions $\sin_\theta$ and $\cos_\theta$ are meromorphic and have simple poles precisely at $z=\pm\frac{i}{(1-\theta)\theta^n}$, $n\in\N_0$.  
 \end{Corollary}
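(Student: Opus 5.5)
Throughout, fix $0<\theta<1$. The plan is to read off both claims from the representation \eqref{sin_sum_e}, which writes
\[
\sin_\theta(z)=\frac{e_\theta(iz)-e_\theta(-iz)}{2i},\qquad \cos_\theta(z)=\frac{e_\theta(iz)+e_\theta(-iz)}{2},
\]
combined with the pole structure of $e_\theta$ from Lemma~\ref{e_theta_singularities_thetale1}.

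\textbf{Meromorphy.} First I would check that the power series defining $\sin_\theta$ and $\cos_\theta$ are exactly the odd and even parts of $e_\theta(iz)$. This holds termwise, so the identities are valid on the disc $|z|<1/(1-\theta)$. The right-hand sides are meromorphic on $\C$ by Lemma~\ref{e_theta_singularities_thetale1}, so they provide the meromorphic continuations of $\sin_\theta$ and $\cos_\theta$.

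\textbf{Candidate poles.} The function $e_\theta(iz)$ has simple poles exactly where $iz=\frac{1}{(1-\theta)\theta^n}$, that is, at $z=-\frac{i}{(1-\theta)\theta^n}$. Likewise, $e_\theta(-iz)$ has simple poles exactly at $z=+\frac{i}{(1-\theta)\theta^n}$, $n\in\N_0$.

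\textbf{No cancellation.} These two pole sets are disjoint: one lies on the negative imaginary axis and the other on the positive imaginary axis. So at each point of the union, exactly one summand has a simple pole and the other is holomorphic there.

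\begin{itemize}
\item Take a point $w=\frac{i}{(1-\theta)\theta^n}$. Near $w$, the function $e_\theta(-iz)$ has a simple pole with nonzero residue, while $e_\theta(iz)$ is holomorphic. Hence both the sum and the difference have a simple pole at $w$, and neither can cancel it.
\item The points $w=-\frac{i}{(1-\theta)\theta^n}$ are handled symmetrically.
\item At every other point both summands are holomorphic, so no further poles occur.
\end{itemize}

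The only point needing care is that the residue is nonzero. This follows from the product representation \eqref{e_theta_prod_thetale1}: a single factor $\frac{1}{1-z(1-\theta)\theta^n}$ vanishes to order $-1$ there, while the remaining product converges to a nonzero finite value because its factors are distinct and $e_\theta$ has no zeros. This is where I expect the only (mild) care to be needed.
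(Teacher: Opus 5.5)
Your proposal is correct and is exactly the paper's argument: the paper reads the corollary off the representations $\sin_\theta(z)=\frac{e_\theta(iz)-e_\theta(-iz)}{2i}$ and $\cos_\theta(z)=\frac{e_\theta(iz)+e_\theta(-iz)}{2}$ together with the pole structure of $e_\theta$ from Lemma~\ref{e_theta_singularities_thetale1}, and your observation that the pole sets of $e_\theta(iz)$ and $e_\theta(-iz)$ lie on opposite halves of the imaginary axis is what rules out cancellation. Your final nonzero-residue check is harmless but redundant, since that lemma already states that the poles of $e_\theta$ are simple.
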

\begin{Corollary}\label{sin_and_cos_prelim_2}
    For $\theta\ge1$, the functions $\sin_\theta$ and $\cos_\theta$ are entire. 
\end{Corollary}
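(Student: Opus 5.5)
The claim is that for $\theta\ge1$ the functions $\sin_\theta$ and $\cos_\theta$ are entire. I would obtain this directly from the representations \eqref{sin_sum_e} together with Lemma~\ref{e_theta_entire_thetage1}, treating $\theta=1$ separately.

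For $\theta>1$, Lemma~\ref{e_theta_entire_thetage1} shows that $e_\theta$ is entire. So are $z\mapsto e_\theta(iz)$ and $z\mapsto e_\theta(-iz)$, since composing an entire function with a linear map preserves holomorphy on all of $\C$.

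The next step is to justify \eqref{sin_sum_e} on all of $\C$. Because the power series of $e_\theta$ has infinite radius of convergence, it converges absolutely everywhere, so its terms may be rearranged freely. Substituting $\pm iz$, the odd and even powers separate:
\begin{align*}
e_\theta(\pm iz)=\sum_{n=0}^\infty\frac{(-1)^nz^{2n}}{[2n]_\theta!}\pm i\sum_{n=0}^\infty\frac{(-1)^nz^{2n+1}}{[2n+1]_\theta!}.
\end{align*}
Each of these two series is a subseries of an absolutely convergent series, so it converges for every $z\in\C$. This means the power series of $\cos_\theta$ and $\sin_\theta$ themselves have infinite radius of convergence, and they are therefore entire. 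Taking half-sums and half-differences of the display then gives \eqref{sin_sum_e} for all $z$, though that identity is not even needed for entireness.

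For $\theta=1$ we have $[n]_1!=n!$ by the continuous extension. The series are then exactly the classical $\cos$ and $\sin$ series, which are entire.

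The only point needing care is that entireness must come from the radius of convergence, not from identifying the functions with some analytic continuation. A direct alternative is to note that $[n]_\theta!\ge\theta^{n(n-1)/2}(\bar\theta;\bar\theta)_\infty(1-\bar\theta)^{-n}$, as computed in the proof of Lemma~\ref{e_theta_entire_thetage1}. This grows superexponentially, so the Cauchy–Hadamard formula gives infinite radius of convergence immediately. I do not expect any step to be a real obstacle.
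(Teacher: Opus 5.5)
Your proof is correct and follows the paper's route: Lemma~\ref{e_theta_entire_thetage1} gives that $e_\theta$ is entire for $\theta>1$, this passes to $\sin_\theta$ and $\cos_\theta$ through the even/odd splitting behind \eqref{sin_sum_e}, and $\theta=1$ is the classical case. Your observation that absolute convergence of the subseries already gives infinite radius of convergence directly (the identity itself is not needed), and your Cauchy--Hadamard alternative, are harmless refinements of the same argument.
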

\begin{Corollary}\label{sin_and_cos_prelim_3}
   For $\theta>0$, $\sin_\theta$ and $\cos_\theta$ do not have any common zeros.
\end{Corollary}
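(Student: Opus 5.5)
Since $\sin_\theta(z)=\frac{e_\theta(iz)-e_\theta(-iz)}{2i}$ and $\cos_\theta(z)=\frac{e_\theta(iz)+e_\theta(-iz)}{2}$ by \eqref{sin_sum_e}, a common zero $z_0$ of $\sin_\theta$ and $\cos_\theta$ would force both $e_\theta(iz_0)=\cos_\theta(z_0)+i\sin_\theta(z_0)=0$ and $e_\theta(-iz_0)=\cos_\theta(z_0)-i\sin_\theta(z_0)=0$. The plan is to show this is impossible, distinguishing the three ranges of $\theta$.

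For $0<\theta<1$, Lemma~\ref{e_theta_singularities_thetale1} says $e_\theta$ has no zeros at all on its domain of meromorphy. Hence $e_\theta(iz_0)=0$ cannot occur. We only need to note that $z_0$ is a regular point of both $\sin_\theta$ and $\cos_\theta$, i.e.\ not one of the poles $\pm i/((1-\theta)\theta^n)$ from Corollary~\ref{sin_and_cos_prelim_1}. At such a point $e_\theta(\pm iz_0)$ are finite, because the poles of $e_\theta(\pm i\,\cdot)$ are exactly those points. The identities therefore hold there and give the contradiction.

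For $\theta>1$, Lemma~\ref{e_theta_entire_thetage1} says the zeros of $e_\theta$ are exactly the real negative numbers $w_n=-1/((1-\bar\theta)\bar\theta^n)$. From $e_\theta(iz_0)=0$ and $e_\theta(-iz_0)=0$ we would get $iz_0=w_n$ and $-iz_0=w_m$ for some $n,m$. Adding these gives $w_n+w_m=0$, which is impossible since both are strictly negative. For $\theta=1$, $e_1=\exp$ has no zeros, so the same argument applies.

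There is no real obstacle here. The only care needed is, in the case $0<\theta<1$, to confirm that the identities \eqref{sin_sum_e} are valid at non-pole points via meromorphic continuation. This holds because the power series identities on the disc of convergence extend by the identity theorem.
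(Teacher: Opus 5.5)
Your proof is correct and follows the paper's route: the paper obtains this corollary directly from the representations \eqref{sin_sum_e}, combined with Lemma~\ref{e_theta_singularities_thetale1} ($e_\theta$ has no zeros for $0<\theta<1$) and Lemma~\ref{e_theta_entire_thetage1} (the zeros of $e_\theta$ lie on the negative real axis for $\theta>1$). You do the same, and your remarks on the validity of \eqref{sin_sum_e} away from the poles and on the case $\theta=1$ fill in details the paper leaves implicit.
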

\begin{Corollary}\label{arctan_cor}
   For $\theta >0$ and all $t\ge 0$, it holds $\arctan_{\theta}(t)=\arctan_{\bar\theta }(t)$. In particular, $\arctan_{\bar\theta }$ is well-defined.
\end{Corollary}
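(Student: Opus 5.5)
The plan is to reduce $\tan_\theta$ to $\tan_{\bar\theta}$ by rescaling the argument, then check that the rescaled interval $[0,z_c)$ matches exactly. The key identity is $[n]_{\bar\theta}! = \bar\theta^{n(n-1)/2}[n]_\theta!$, from the proof of Lemma~\ref{e_theta_entire_thetage1} with the roles of $\theta$ and $\bar\theta$ swapped. It gives
\[
e_{\bar\theta}(z)=\sum_{n}\frac{\theta^{n(n-1)/2}z^n}{[n]_\theta!}.
\]
This is not a plain rescaling of $e_\theta$, so I will not try to compare $e_\theta$ and $e_{\bar\theta}$ termwise. I will use the product representations instead.

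For $\theta>1$, Lemma~\ref{e_theta_entire_thetage1} gives
\[
e_\theta(w)=\prod_{k\ge0}\bigl(1+(1-\bar\theta)\bar\theta^k w\bigr).
\]
Since $0<\bar\theta<1$, Lemma~\ref{e_theta_singularities_thetale1} gives
\[
e_{\bar\theta}(w)=\prod_{k\ge0}\bigl(1-(1-\bar\theta)\bar\theta^k w\bigr)^{-1}.
\]
From these, $e_{\bar\theta}(iz)=1/e_\theta(-iz)$. Hence
\[
e_{\bar\theta}(iz)=\frac{\overline{e_\theta(-iz)}}{|e_\theta(iz)|^2}\quad\text{for real }z,
\]
using $e_\theta(iz)=\overline{e_\theta(-iz)}$, which holds because the coefficients are real. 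Combining this with \eqref{sin_sum_e}, $\cos_{\bar\theta}(z)+i\sin_{\bar\theta}(z)=(\cos_\theta(z)+i\sin_\theta(z))/|e_\theta(iz)|^2$. Taking real and imaginary parts, $\cos_{\bar\theta}(z)=\cos_\theta(z)/|e_\theta(iz)|^2$ and $\sin_{\bar\theta}(z)=\sin_\theta(z)/|e_\theta(iz)|^2$ for all real $z$.

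The denominator $|e_\theta(iz)|^2$ is strictly positive, since the zeros of $e_\theta$ are real by Lemma~\ref{e_theta_entire_thetage1}. Therefore $\tan_{\bar\theta}(z)=\tan_\theta(z)$ wherever the cosines are nonzero. Moreover $\cos_\theta$ and $\cos_{\bar\theta}$ have the same real zeros, so $z_c(\theta)=z_c(\bar\theta)$.

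By Lemma~12 of \cite{AR}, $\tan_{\bar\theta}$ is a strictly increasing bijection $[0,z_c(\bar\theta))\to[0,\infty)$, because $\bar\theta\in(0,1)$. By the identity above, $\tan_\theta$ is the same function on the same interval. This gives the bijectivity claimed for $\theta>1$ in the introduction, and also $\arctan_\theta=\arctan_{\bar\theta}$ on $[0,\infty)$. The case $\theta<1$ follows by exchanging the roles of $\theta$ and $\bar\theta$, and $\theta=1$ is trivial.

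The main obstacle is that this corollary is used to define $\arctan_\theta$ for $\theta>1$. I therefore have to make sure the argument does not presuppose the bijectivity of $\tan_\theta$ for $\theta>1$, and deduces it only from the $\bar\theta<1$ case. The argument above satisfies this, provided the sign and conjugation bookkeeping in $e_{\bar\theta}(iz)=1/e_\theta(-iz)$ is done carefully.
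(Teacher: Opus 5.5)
Your proposal is correct and follows essentially the same route as the paper: both use $e_\theta(w)\,e_{\bar\theta}(-w)=1$ to obtain, on the real line, $\cos$ and $\sin$ for the parameter in $(0,1)$ as the corresponding functions for the reciprocal parameter divided by $e(iz)e(-iz)=|e(iz)|^2$. Since $e$ for the parameter greater than $1$ has only real zeros, this denominator never vanishes, so both pairs of functions have the same zeros, the same $z_c$ and the same $\tan$, and bijectivity transfers from Lemma~12 of \cite{AR} for the parameter in $(0,1)$, so nothing is circular. The only difference is cosmetic: you extract the identities through conjugation and real and imaginary parts, which tacitly uses that $\cos_{\bar\theta}$ and $\sin_{\bar\theta}$ are real on the real axis, while the paper gets them directly by adding and subtracting $e(\pm iz)$ as identities of meromorphic functions.
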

\begin{proof}
 The case $\theta=1$ is trivial, so without loss of generality, let $0<\theta<1$. The identity of meromorphic functions $ e_{\theta }(z)e_{\bar\theta }(-z)=1 $ implies $e_{\theta } (iz) = \frac{1}{e_{\bar\theta }(-iz)} $, so for all $z\in[0,z_c(\theta))$, it holds
    \begin{align*}
       \cos_\theta(z)&=\frac{e_\theta(iz)+e_\theta(-iz)}{2}
       =\frac{e_{\bar\theta}(iz)+e_{\bar\theta}(-iz)}{2e_{\bar\theta}(iz)e_{\bar\theta}(-iz)}=\frac{\cos_{\bar\theta}(z)}{e_{\bar\theta}(iz)e_{\bar\theta}(-iz)}.
    \end{align*}
    Lemma~\ref{e_theta_entire_thetage1} yields that $\cos_\theta$ and $\cos_{\bar\theta}$ have the same positive real zeros, which justifies the shorter notation $z_c\coloneqq z_c(\theta)=z_c(\bar\theta)$. A similar argument for $\sin_\theta$ and $\sin_{\bar\theta}$ implies that $\tan_{\bar\theta }$ is defined on the same interval $[0,z_c)$. Moreover, for $z\in[0,z_c(\theta))$, it holds
\begin{align*} 
    \tan_{\theta}(z)
    &= \frac{ e_{\theta } (iz)-e_{\theta }(-iz) }{ i\bigl(e_{\theta } (iz)+e_{\theta }(-iz)\bigr) } 
    = \frac{ e_{\bar\theta } (iz)-e_{\bar\theta }(-iz) }{ i\bigl( e_{\bar\theta } (iz)+e_{\bar\theta }(-iz) \bigr) } 
    = \tan_{\bar\theta }(z).
\end{align*} 
Recalling that $\tan_{\theta}:[0,z_c(\theta))\to[0,\infty)$ is a bijective function by Lemma 12 in \cite{AR} yields the claim.
\end{proof}
 \begin{Corollary}\label{sine_and_cose_prelim}
      For $\theta\ge1$ and $y<0$, the functions $\sin_\theta(z,y)$ and $\cos_\theta(z,y)$ are entire in $z$.
 \end{Corollary}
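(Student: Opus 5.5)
The strategy is to bound the coefficients of the series in absolute value uniformly in $z$ on compact sets, and then use the Weierstrass M-test. For $\theta\ge 1$ and $y<0$, look at $\sin_\theta(z,y)$. Its $j$-th term is $\frac{(-1)^j z^{2j+1}}{[2j+1]_\theta!}e^{\theta\frac{1-\theta^{2j+1}}{1-\theta}y}$. Consider the exponent $\theta\frac{1-\theta^{2j+1}}{1-\theta}y=\theta[2j+1]_\theta\, y$ (for $\theta=1$ this is read as the continuous extension $(2j+1)y$). Since $[2j+1]_\theta>0$ and $y<0$, the exponent is negative, so $0<e^{\theta[2j+1]_\theta y}\le 1$. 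The same holds for $\cos_\theta(z,y)$, where the factor is $e^{\theta[2j]_\theta y}\le 1$.

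From this, for $|z|\le R$ the $j$-th term is dominated by $R^{2j+1}/[2j+1]_\theta!$ (respectively $R^{2j}/[2j]_\theta!$). These are the terms of $\sum_n R^n/[n]_\theta!$, which is the series of $e_\theta(R)$. By Lemma~\ref{e_theta_entire_thetage1} for $\theta>1$, and trivially for $\theta=1$, that series converges for every $R>0$. The M-test then shows that the series converge uniformly on every compact subset of $\C$. Each partial sum is a polynomial in $z$, so by Weierstrass' theorem the limits are entire in $z$.

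The case $\theta=1$ needs separate treatment because the functions are defined there as continuous extensions. By definition, $\sin_1(z,y)=\sin(ze^y)$ and $\cos_1(z,y)=\cos(ze^y)$, which are obviously entire in $z$. This is consistent with the series, since $[n]_1!=n!$ and $e^{ny}$ then gives $\sum(-1)^j(ze^y)^{2j+1}/(2j+1)!$.

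Nothing here is a real obstacle. The only point requiring care is the sign of the exponent, and it is there that $y<0$ and $\theta\ge1$ are used: for $\theta>1$ the factor $\theta^{2j+1}$ grows, so with $y>0$ the damping factor would become $e^{+\text{huge}}$ and could destroy convergence. In fact, for $y<0$ the factor is super-exponentially small, so the estimate is very crude.
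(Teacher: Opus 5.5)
Your proposal is correct and follows the paper's proof. You bound the damping factor $e^{\theta[k]_\theta y}$ by $1$ for $y<0$, dominate the series by $e_\theta(|z|)$ (whose series has infinite radius of convergence by Lemma~\ref{e_theta_entire_thetage1}), and treat $\theta=1$ through the continuous extensions $\sin(ze^y)$ and $\cos(ze^y)$. Your explicit M-test on compact sets simply spells out the locally uniform convergence that the paper's pointwise bound leaves implicit.
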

 \begin{proof}
    For $\theta>1$ and $y<0$, we can use the estimate
\begin{align*}
    |\sin_{\theta}(z, y)|\leq \sum_{k~\text{odd}}\frac{|z|^{k}}{[k]_{\theta}!}|e^{\theta\frac{1-\theta^{k}}{1-\theta} y}|\leq \sum_{k=0}^\infty\frac{|z|^{k}}{[k]_{\theta}!}=e_\theta(|z|)
\end{align*}
and a similar one for $\cos_{\theta}(z,y)$. Lemma~\ref{e_theta_entire_thetage1} then yields that the functions $\sin_\theta(z,y)$ and $\cos_\theta(z,y)$ are entire. For $\theta=1$, we use the continuous extensions $\sin_{\theta}(z, y)=\sin(ze^y)$ and $\cos_{\theta}(z, y)=\cos(ze^y)$.
 \end{proof}
For the remainder of this paper, we will use the following notation: For $k\in\N_0$, set
   \begin{align}\label{def_zk}
       z_k^+\coloneqq \frac{i}{(1-\theta)\theta^k},\qquad z_k^-\coloneqq \frac{-i}{(1-\theta)\theta^k}\qquad\text{and}\qquad Z_\theta\coloneqq \{z_k^+,z_k^-:k\in\N_0\}.
   \end{align}
   We next relate the shifted $q$-trigonometric functions to their unshifted counterparts in order to control their meromorphic continuations and residues.
 \begin{Lemma}\label{sin_e_and_cos_e_prelim}
    For $0<\theta<1$ and $y\in\R$, we have
     \begin{align}
         \sin_\theta(z,y)&=e^{\frac{\theta y}{1-\theta}}\sum_{m=0}^\infty\frac{\left(-\frac{\theta y}{1-\theta}\right)^m}{m!}\sin_\theta(\theta^m z)\quad\text{and}\label{meromorhpic_sin}
         \\\cos_\theta(z,y)&=e^{\frac{\theta y}{1-\theta}}\sum_{m=0}^\infty\frac{\left(-\frac{\theta y}{1-\theta}\right)^m}{m!}\cos_\theta(\theta^m z),\label{meromorhpic_cos}
     \end{align}
     where the right-hand sides can be seen as a meromorphic continuation to the whole complex plane, respectively.
 \end{Lemma}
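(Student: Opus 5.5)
The plan is to expand the exponential weight in each coefficient of $\sin_\theta(z,y)$ and resum. Write $s_k := \theta\frac{1-\theta^{k}}{1-\theta}$. Then
\[
s_k = \frac{\theta}{1-\theta} - \frac{\theta}{1-\theta}\,\theta^{k},
\qquad
e^{s_k y} = e^{\frac{\theta y}{1-\theta}}\, \exp\!\Bigl(-\frac{\theta y}{1-\theta}\,\theta^{k}\Bigr)
= e^{\frac{\theta y}{1-\theta}} \sum_{m=0}^\infty \frac{\bigl(-\frac{\theta y}{1-\theta}\bigr)^m}{m!}\,\theta^{km}.
\]
Insert this into
\[
\sin_\theta(z,y)=\sum_{j}\frac{(-1)^j z^{2j+1}}{[2j+1]_\theta!}\,e^{s_{2j+1}y}.
\]
Since $\theta^{(2j+1)m} z^{2j+1} = (\theta^m z)^{2j+1}$, exchanging the $j$- and $m$-sums gives
\[
e^{\frac{\theta y}{1-\theta}}\sum_m \frac{\bigl(-\frac{\theta y}{1-\theta}\bigr)^m}{m!}\,\sin_\theta(\theta^m z),
\]
which is \eqref{meromorhpic_sin}. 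The cosine identity \eqref{meromorhpic_cos} follows verbatim with $2j$ in place of $2j+1$.

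Next I justify the interchange. For $|z|<1/(1-\theta)$, the series for $\sin_\theta$ and $\cos_\theta$ converge absolutely, since $e_\theta$ has radius $1/(1-\theta)$ by Lemma~\ref{e_theta_singularities_thetale1}. Take absolute values in the double sum; because $\theta^{km}\le 1$, it is bounded by
\[
\sum_j \frac{|z|^{2j+1}}{[2j+1]_\theta!}\sum_m \frac{|c|^m}{m!}\le e^{|c|}\,e_\theta(|z|)<\infty,
\qquad c\coloneqq -\frac{\theta y}{1-\theta}.
\]
Fubini therefore gives the identity on this disc. Note that $\sin_\theta(z,y)$ itself, as a power series in $z$, also has radius at least $1/(1-\theta)$, since its coefficients are those of $\sin_\theta$ times the bounded factor $e^{s_ky}\le e^{\max(0,\theta y/(1-\theta))}$.

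Then I show the right-hand side is meromorphic on $\C$. By Corollary~\ref{sin_and_cos_prelim_1}, each $z\mapsto\sin_\theta(\theta^m z)$ is meromorphic with simple poles at $\theta^{-m}z_k^{\pm}$, that is, at the points $z_{k+m}^{\pm}$ of $Z_\theta$. Fix a compact set $K\subset\C\setminus Z_\theta$. For $m\ge m_0(K)$ we have $|\theta^m z|\le \frac{1}{2(1-\theta)}$ on $K$, so $|\sin_\theta(\theta^m z)|\le e_\theta\bigl(\frac{1}{2(1-\theta)}\bigr)$ there. The tail $\sum_{m\ge m_0}$ thus converges uniformly on $K$ by comparison with $\sum_m |c|^m/m!$. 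The finitely many remaining terms are holomorphic on $K$. Hence the series defines a holomorphic function on $\C\setminus Z_\theta$, agreeing with $\sin_\theta(z,y)$ near $0$.

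Finally I check that the points of $Z_\theta$ are at most poles. Near a point $z_n^\pm$, only the terms with $m\le n$ can be singular, and each is a simple pole. The argument above then gives uniform convergence of the rest on a punctured neighbourhood, and in fact on a full neighbourhood after removing these finitely many terms. So $Z_\theta$, which is discrete since $|z_k^\pm|\to\infty$, consists of at most simple poles, and the right-hand side is a meromorphic continuation to all of $\C$.

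The only step requiring care is this local uniform bound away from $Z_\theta$. It works because $\theta^m z\to 0$ uniformly on compacts, so no real obstacle is expected. One could additionally compute the residues via Corollary~\ref{sin_and_cos_prelim_1}, but the statement does not require it.
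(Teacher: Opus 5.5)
Your proof is correct and follows the paper's argument. You expand $e^{\theta\frac{1-\theta^{k}}{1-\theta}y}$ as an exponential series, interchange the sums on $|z|<1/(1-\theta)$ using the bound $e^{|c|}e_\theta(|z|)$, and get locally uniform convergence on $\C\setminus Z_\theta$ from $\theta^m z\to 0$. Your extra check that each $z_n^\pm$ is at most a simple pole (only the terms with $m\le n$ are singular there) is what the paper proves separately in the first part of Lemma~\ref{residue_sin_cos_prelim}; doing it here fully justifies the word ``meromorphic'' within this lemma.
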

 \begin{proof}
Let $0<\theta<1$ and $|z|<\frac{1}{1-\theta}$. 
Then, using the exponential series,
\begin{align}\label{eq_for_proof_sin_zy}
    \sin_\theta(z,y)
    &= \sum_{j=0}^\infty\frac{(-1)^{j}z^{2j+1}}{[2j+1]_{\theta}!}e^{\theta\frac{1-\theta^{2j+1}}{1-\theta} y}\notag
    \\&=e^{\frac{\theta y}{1-\theta}}\sum_{j=0}^\infty\frac{(-1)^{j}z^{2j+1}}{[2j+1]_{\theta}!}\sum_{m=0}^\infty\frac{\left(-\frac{ \theta y}{1-\theta}\right)^m}{m!}\theta^{(2j+1)m}\notag
    \\&=e^{\frac{\theta y}{1-\theta}}\sum_{m=0}^\infty\frac{\left(-\frac{\theta y}{1-\theta}\right)^m}{m!}\sum_{j=0}^\infty\frac{(-1)^{j}(\theta^m z)^{2j+1}}{[2j+1]_{\theta}!}\notag
    \\&=e^{\frac{\theta y}{1-\theta}}\sum_{m=0}^\infty\frac{\left(-\frac{\theta y}{1-\theta}\right)^m}{m!}\sin_\theta(\theta^m z),
\end{align}
where we exchanged the sums since, apart from the
finite factor $e^{\frac{\theta y}{1-\theta}}$,
\begin{align*}
    \sum_{j=0}^\infty\sum_{m=0}^\infty
    \frac{|z|^{2j+1}}{[2j+1]_\theta!}
    \frac{\left(\frac{\theta|y|}{1-\theta}\right)^m}{m!}
    \theta^{(2j+1)m}
    &\leq \left(\sum_{k=0}^\infty \frac{|z|^{k}}{[k]_\theta!}\right)\exp\left(\frac{\theta|y|}{1-\theta}\right)
    <\infty
\end{align*}
for $|z|<1/(1-\theta)$, by Lemma~\ref{e_theta_singularities_thetale1}.
By Corollary~\ref{sin_and_cos_prelim_1}, the function $\sin_\theta$ is meromorphic and has simple poles at  $z=\pm\frac{i}{(1-\theta)\theta^n}$, $n\in\N_0$. Thus, for fixed $m$, the possible poles of $\sin_\theta(\theta^m z)$ are given by $z=\pm\frac{i}{(1-\theta)\theta^{n+m}}\in Z_\theta$. Let $K$ be a compact subset of $\C\backslash Z_\theta$. For every $z\in K$, the choice $0<\theta<1$ yields that $\sup_{z\in K}|\theta^mz|\to 0$ as $m\to\infty$, so in particular, there exists some $m_0\in\N_0$ such that
$|\theta^m z|\le\frac{1}{2(1-\theta)}$ for all $z\in K$ and $m\ge m_0$. Again, by Corollary~\ref{sin_and_cos_prelim_1}, $\sin_\theta$ is holomorphic on the compact disk $|z|\le\frac{1}{2(1-\theta)}$ and thus is bounded there. In particular, with $C_K\coloneqq \max_{|w|\le\frac{1}{2(1-\theta)}}|\sin_\theta(w)|>0$, we have
   $|\sin_\theta(\theta^m z)|\le C_K$,
for all $z\in K$ and $m\ge m_0$. Therefore, for $z\in K$, the right-hand side in \eqref{eq_for_proof_sin_zy} can be estimated by
\begin{align*}
    \left|e^{\frac{\theta y}{1-\theta}}\sum_{m=0}^\infty\frac{\left(-\frac{\theta y}{1-\theta}\right)^m}{m!}\sin_\theta(\theta^m z)\right|
    \!\le\! e^{\frac{\theta y}{1-\theta}}\!\left(\sum_{m=0}^{m_0-1}\left|\frac{\left(-\frac{\theta y}{1-\theta}\right)^m}{m!}\sin_\theta(\theta^m z)\right|\!+\!\!\!\sum_{m=m_0}^\infty\!\!\frac{\left(\frac{\theta |y|}{1-\theta}\right)^m}{m!}C_K\right)\!<\!\infty.
\end{align*}
The first sum of the finitely many continuous terms with $m\le m_0-1$ is bounded on the compact set $K$ and the remaining terms are uniformly dominated on $K$ as well, so the series in \eqref{eq_for_proof_sin_zy} converges locally uniformly on $\C\backslash Z_\theta$. Moreover, it defines a holomorphic function on this set and since it agrees with $ \sin_\theta(z,y)$ on the disk $|z|<\frac{1}{1-\theta}$, the series provides a meromorphic continuation of $ \sin_\theta(z,y)$.
The proof for $\cos_\theta(z,y)$ follows by a similar argument.
 \end{proof}
 \begin{Lemma}\label{residue_sin_cos_prelim}
   For $0<\theta<1$ and $y\in\R$, every singularity $z_0$ of 
   $\sin_\theta(z,y)$ or $\cos_\theta(z,y)$ is at most a simple pole and it holds $z_0\in Z_\theta$. Furthermore, for every $k\in\N_0$, it holds
   \begin{align}
   \mathrm{Res}_{z=z_k^+}\sin_\theta(z)&= i\mathrm{Res}_{z=z_k^+}\cos_\theta(z),\label{Res_sin_cos_pos}
   \\\mathrm{Res}_{z=z_k^-}\sin_\theta(z)&= -i\mathrm{Res}_{z=z_k^-}\cos_\theta(z),\label{Res_sin_cos_neg}
      \\ \mathrm{Res}_{z=z_k^+}\sin_\theta(z,y)&= i\mathrm{Res}_{z=z_k^+}\cos_\theta(z,y),\quad\text{and}\notag
       \\ \mathrm{Res}_{z=z_k^-}\sin_\theta(z,y)&=-i \mathrm{Res}_{z=z_k^-}\cos_\theta(z,y).\notag
   \end{align}
 \end{Lemma}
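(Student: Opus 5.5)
The plan is to read everything off the representations \eqref{sin_sum_e} and the meromorphic continuations from Lemma~\ref{sin_e_and_cos_e_prelim}. First I treat the unshifted functions. By Lemma~\ref{e_theta_singularities_thetale1}, $e_\theta$ is meromorphic with simple poles exactly at the real points $w_n\coloneqq\frac{1}{(1-\theta)\theta^n}$. Hence $w\mapsto e_\theta(iw)$ has simple poles exactly at $w=-iw_n=z_n^-$, and $w\mapsto e_\theta(-iw)$ has simple poles exactly at $w=iw_n=z_n^+$. Since $z_n^+\neq z_m^-$ for all $n,m$, near $z_k^+$ the term $e_\theta(iz)$ is holomorphic. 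Therefore
\[
\mathrm{Res}_{z=z_k^+}\sin_\theta(z)=-\tfrac{1}{2i}\mathrm{Res}_{z=z_k^+}e_\theta(-iz)
\quad\text{and}\quad
\mathrm{Res}_{z=z_k^+}\cos_\theta(z)=\tfrac12\mathrm{Res}_{z=z_k^+}e_\theta(-iz).
\]
The ratio of the two residues is $-\frac{1}{i}=i$, which gives \eqref{Res_sin_cos_pos}. In the same way, at $z_k^-$ only $e_\theta(iz)$ contributes. The residues are then $\frac{1}{2i}R$ and $\frac12 R$ with $R\coloneqq\mathrm{Res}_{z=z_k^-}e_\theta(iz)$. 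This gives $\mathrm{Res}\sin_\theta=\frac1i\mathrm{Res}\cos_\theta=-i\,\mathrm{Res}\cos_\theta$, which is \eqref{Res_sin_cos_neg}.

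For the shifted functions, I use \eqref{meromorhpic_sin} and \eqref{meromorhpic_cos}. The series converge locally uniformly on $\C\setminus Z_\theta$, so every singularity lies in $Z_\theta$. Fix $z_k^{\pm}$ and choose a small punctured disk $D$ around it that contains no other point of $Z_\theta$. The term $\sin_\theta(\theta^m z)$ has a pole at $z_k^\pm$ only if $\theta^m z_k^\pm=z_n^\pm$ for some $n$, that is, only for $m\le k$; in that case $n=k-m$. So I split the series into the finitely many terms $m\le k$ and the tail $m>k$. For the tail I apply the domination argument of Lemma~\ref{sin_e_and_cos_e_prelim} on a closed disk around $z_k^\pm$. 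The points $\theta^m z$ for $m>k$ and $z$ near $z_k^\pm$ stay near $\theta^{m}z_k^\pm$, which are pole-free points of modulus at most $\theta/(1-\theta)$ times $\theta^{-k}$. The only nearby poles are of the form $z_n^\pm$ with $n\ge 0$, and the moduli $\theta^{m-k}/(1-\theta)$ for $m>k$ are strictly smaller than $1/(1-\theta)$. So for $m>k$ the arguments avoid the poles uniformly: once $m$ is large the domination from the earlier proof applies, and the finitely many remaining terms are holomorphic near $z_k^\pm$. Hence the tail is holomorphic at $z_k^\pm$. Each head term has at most a simple pole there, since $z\mapsto\sin_\theta(\theta^m z)$ is a rescaling of a function with simple poles. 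A finite sum of such terms has at most a simple pole, and the same holds for $\cos_\theta(z,y)$.

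The residue identities for the shifted functions now follow by linearity. We have
\[
\mathrm{Res}_{z=z_k^\pm}\sin_\theta(\theta^m z)=\theta^{-m}\,\mathrm{Res}_{w=z_{k-m}^\pm}\sin_\theta(w),
\]
and the same for $\cos$, with identical factors $\theta^{-m}$ and coefficients $e^{\frac{\theta y}{1-\theta}}\frac{(-\theta y/(1-\theta))^m}{m!}$. By the unshifted identities applied at $z_{k-m}^\pm$, each summand satisfies the relation with factor $\pm i$, so the sums do too. The main obstacle is the second step: justifying that the infinite tail of the series is holomorphic at $z_k^\pm$. I would handle it by redoing the compact-set domination of Lemma~\ref{sin_e_and_cos_e_prelim} on a closed disk around $z_k^\pm$, using that $m\mapsto|\theta^m z|$ is strictly decreasing, so only indices $m\le k$ can hit a pole.
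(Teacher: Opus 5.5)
Your proposal is correct and follows essentially the same route as the paper: the residue relations for $\sin_\theta,\cos_\theta$ come from \eqref{sin_sum_e} because only one of $e_\theta(\pm iz)$ is singular at $z_k^\pm$. They then transfer to $\sin_\theta(\theta^m z),\cos_\theta(\theta^m z)$ with the common factor $\theta^{-m}$, and to the shifted functions via the finitely many terms $m\le k$ of \eqref{meromorhpic_sin} and \eqref{meromorhpic_cos}. Your explicit check that the tail $m>k$ is holomorphic at $z_k^\pm$ fills in a point the paper leaves implicit: all arguments $\theta^m z$ stay in a closed disk of radius strictly less than $1/(1-\theta)$. The bound needed there is simply $|\theta^m z_k^\pm|=\theta^{m-k}/(1-\theta)\le\theta/(1-\theta)$, without the extra factor $\theta^{-k}$ you mention first.
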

 \begin{proof}
    Fix $k\in\N_0$. By Lemma~\ref{sin_e_and_cos_e_prelim}, we can use the meromorphic continuations \eqref{meromorhpic_sin} and \eqref{meromorhpic_cos}
     and therefore start with the properties of the singularities of $\sin_\theta(\theta^m z)$ and $\cos_\theta(\theta^m z)$ and then transfer them to $\sin_\theta(z,y)$ and $\cos_\theta(z,y)$. The functions $\sin_\theta(\theta^m z)$ and $\cos_\theta(\theta^m z)$, respectively, can only have a singularity at $z=z_k^\pm$ if $m\le k$ since 
    \begin{align*}
        \theta^mz_k^\pm=\frac{\pm i}{(1-\theta)\theta^{k-m}}
    \end{align*}
    is equal to $ z_{k-m}^\pm\in Z_\theta$ if and only if $m\le k$. Since $k$ is fixed, there are only finitely many summands on the right-hand side in \eqref{meromorhpic_sin} and \eqref{meromorhpic_cos}, respectively, with a singularity in $z_k^\pm$. Moreover, all poles of $\sin_\theta$ and $\cos_\theta$ are simple, so the singularities of $\sin_\theta(z,y)$ and $\cos_\theta(z,y)$ are at most simple poles, which proves the first part of the claim.
    
    The residue relations will follow from the representations \eqref{sin_sum_e}. 
    First, consider $z_k^+$, so $-iz_k^+=\frac{1}{(1-\theta)\theta ^k}$ and $iz_k^+=\frac{-1}{(1-\theta)\theta ^k}$. Hence, by Lemma~\ref{e_theta_singularities_thetale1}, the function $z\mapsto e_\theta(-iz)$ has a simple pole at $z=z_k^+$ and $z\mapsto e_\theta(iz)$ is holomorphic at $z=z_k^+$. Therefore, by \eqref{sin_sum_e},
    it holds
    \begin{align*}
        \mathrm{Res}_{z=z_k^+}\sin_\theta(z)=\frac{-\mathrm{Res}_{z=z_k^+}e_\theta(-iz)}{2i}=
        \frac{i\mathrm{Res}_{z=z_k^+}e_\theta(-iz)}{2}
    \end{align*}
    and
    \begin{align*}
        \mathrm{Res}_{z=z_k^+}\cos_\theta(z)=\frac{\mathrm{Res}_{z=z_k^+}e_\theta(-iz)}{2},
    \end{align*}
    which yields \eqref{Res_sin_cos_pos}.
    The same idea applied to $z_k^-$ gives \eqref{Res_sin_cos_neg}.
    
    Now, we transfer these identities to the scaled functions $\sin_\theta(\theta^m z)$ and $\cos_\theta(\theta^m z)$. As we have seen above, for $m\le k$, it holds with $\theta^m z_k^\pm=z_{k-m}^\pm$
    \begin{align*}
        \mathrm{Res}_{z=z_k^\pm}\sin_\theta(\theta ^mz)=\theta^{-m}\mathrm{Res}_{z=z_{k-m}^\pm}\sin_\theta(z)
    \end{align*}
    and analogously for $\cos_\theta(\theta^m z)$. For $m>k$, there is no singularity. Therefore, the residue relations between $\sin_\theta(z)$ and $\cos_\theta(z)$ also hold true between $\sin_\theta(\theta^mz)$ and $\cos_\theta(\theta^m z)$.
    
    The last step is to deduce the claim for $\sin_\theta(z,y)$ and $\cos_\theta(z,y)$. Using  the meromorphic continuation given in \eqref{meromorhpic_sin} and \eqref{meromorhpic_cos} and the fact that only finitely many terms of the sum on their right-hand side contribute to the residue at $z_k^\pm$, we may take residues term by term. Therefore, it holds for the residue at $z_k^\pm$
    \begin{align*}
        \mathrm{Res}_{z=z_k^+}\sin_\theta(z,y)&=e^{\frac{\theta y}{1-\theta}}\sum_{m=0}^k\frac{\left(-\frac{\theta y}{1-\theta}\right)^m}{m!}\mathrm{Res}_{z=z_k^+}\sin_\theta(\theta^m z)
        \\&=ie^{\frac{\theta y}{1-\theta}}\sum_{m=0}^k\frac{\left(-\frac{\theta y}{1-\theta}\right)^m}{m!}\mathrm{Res}_{z=z_k^+}\cos_\theta(\theta^m z)
        \\&=i\mathrm{Res}_{z=z_k^+}\cos_\theta(z,y)
    \end{align*}
    and the same for $\cos_\theta(z,y)$ and at $z_k^-$, respectively. This shows the claim.
 \end{proof}
\section{The case \texorpdfstring{$\theta>0$}{theta>0}}\label{sec_theta_pos}
In the first subsection, we find a closed formula for the generating function for all $\theta>0,\theta\neq 1$.
Then, we consider the cases $0<\theta<1$ and $\theta>1$ separately in Section~\ref{sec_thetale1} and Section~\ref{sec_thetage1}, respectively. Finally, the invariant distribution of both cases is treated in Section~\ref{invariant distributions}.
\subsection{Generating functions}
Lemma 4 for $y\geq0$ and Lemma 5 for $y\leq 0$ from \cite{AR} give an explicit formula for the persistence probability $p_n^{\xi,y}(\theta)$:
\begin{Lemma}\label{P_x_theta_positiv}
    Let $\theta>0$ and $\xi\in[0,1]$. For all $n\geq1$, it holds
        \begin{align*}
        p_n^{\xi,y}(\theta)=\begin{cases}
            c_ne^{-\alpha_ny}, & y\geq 0,
        \\
        \sum_{j=1}^n \frac{(-\xi)^{j-1} r_{n-j+1}}{[j]_\theta!} \left(1 - e^{\alpha_j y} \right)+c_n, &y\leq 0,
        \end{cases}
    \end{align*}
where $\alpha_1\coloneqq\theta$, $c_1\coloneqq 1-\xi$, $r_1\coloneqq \xi$ and the further constants for $n\ge 2$ are defined via the recursions
\begin{align*}
    \alpha_n&\coloneqq (\alpha_{n-1}+1)\theta=\sum_{i=1}^n\theta^i,
    \\c_n&\coloneqq\frac{c_{n-1}(1-\xi)}{\alpha_{n-1}+1}=(1-\xi)^n\prod_{j=1}^{n-1}\frac{1}{\sum_{i=0}^j\theta^i}=\frac{(1-\xi)^n}{[n]_\theta!},\qquad \text{and}
    \\r_n&\coloneqq\sum_{j=1}^{n-1}\frac{(-1)^{j-1}\xi^jr_{n-j}}{\prod_{m=1}^{j-1} (\alpha_m + 1)}+\xi c_{n-1}=-\sum_{j=1}^{n-1}\frac{(-\xi)^jr_{n-j}}{[j]_\theta!}+\xi\frac{(1-\xi)^{n-1}}{[n-1]_\theta!}.
\end{align*}
\end{Lemma}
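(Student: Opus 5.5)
We prove Lemma~\ref{P_x_theta_positiv} by induction on $n$, using the one-step Markov recursion
\begin{align*}
p_n^{\xi,y}(\theta)=\int_{\{x>\theta y\}} p_{n-1}^{\xi,x}(\theta)\,\phi(x)\,\d x,\qquad n\ge1,
\end{align*}
with $p_0^{\xi,x}(\theta)=1$. This is the formula from Lemmas 4 and 5 of \cite{AR}, and it can be cited directly. For completeness we verify it, treating the two cases $y\ge0$ and $y\le0$ separately. The threshold $\theta y$ has the same sign as $y$ because $\theta>0$.

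\emph{Base case $n=1$.} For $y\ge0$ we have $p_1^{\xi,y}=\int_{\theta y}^\infty(1-\xi)e^{-x}\d x=(1-\xi)e^{-\theta y}=c_1e^{-\alpha_1y}$. For $y\le0$ we have $p_1^{\xi,y}=\xi(1-e^{\theta y})+(1-\xi)$, which matches the formula since $r_1=\xi$ and $[1]_\theta!=1$.

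\emph{Induction step, $y\ge0$.} Here only $x>\theta y\ge0$ contributes, so the induction hypothesis in its $x\ge0$ form applies:
\begin{align*}
p_n^{\xi,y}=\int_{\theta y}^\infty c_{n-1}e^{-\alpha_{n-1}x}(1-\xi)e^{-x}\d x=\frac{c_{n-1}(1-\xi)}{\alpha_{n-1}+1}\,e^{-(\alpha_{n-1}+1)\theta y}.
\end{align*}
This is exactly the recursion for $c_n$ and $\alpha_n$. The closed forms then follow from $\alpha_j+1=[j+1]_\theta$.

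\emph{Induction step, $y\le0$.} We split the integral at $0$. The part over $x>0$ equals $\int_0^\infty c_{n-1}(1-\xi)e^{-(\alpha_{n-1}+1)x}\d x=c_n$, which is the constant term. For the part over $\theta y<x<0$ we insert the $x\le0$ form of the hypothesis and integrate term by term against $\xi e^x$, using
\begin{align*}
\int_{\theta y}^0 \xi e^{x}\bigl(1-e^{\alpha_j x}\bigr)\d x=\xi\Bigl[(1-e^{\theta y})-\frac{1-e^{(\alpha_j+1)\theta y}}{\alpha_j+1}\Bigr].
\end{align*}
Since $(\alpha_j+1)\theta=\alpha_{j+1}$, the exponentials produced are again of the form $e^{\alpha_k y}$.

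\emph{Matching coefficients.} We regroup everything as $\sum_j(\cdot)(1-e^{\alpha_jy})+c_n$.
\begin{itemize}
\item Each term $j$ of the hypothesis for $n-1$ contributes $-\xi\frac{(-\xi)^{j-1}r_{n-j}}{[j]_\theta![j+1]_\theta}(1-e^{\alpha_{j+1}y})=\frac{(-\xi)^{j}r_{n-j}}{[j+1]_\theta!}(1-e^{\alpha_{j+1}y})$. This produces the terms with index $j+1\ge2$ of the claimed sum, since $r_{n-(j+1)+1}=r_{n-j}$.
\item The remaining pieces are all multiples of $(1-e^{\theta y})=(1-e^{\alpha_1y})$. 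They are the $\xi(1-e^{\theta y})$ parts of each term, namely $\xi\sum_{j=1}^{n-1}\frac{(-\xi)^{j-1}r_{n-j}}{[j]_\theta!}$, together with $\xi c_{n-1}(1-e^{\theta y})$ coming from the constant $c_{n-1}$.
\end{itemize}
The total coefficient of $(1-e^{\alpha_1y})$ is therefore
\begin{align*}
-\sum_{j=1}^{n-1}\frac{(-\xi)^jr_{n-j}}{[j]_\theta!}+\xi c_{n-1}.
\end{align*}
This is precisely the definition of $r_n$, and it is the $j=1$ term of the claim.

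The main obstacle is this bookkeeping of index shifts and signs in the last step, in particular confirming that the $(1-e^{\theta y})$ contributions assemble exactly into the stated recursion for $r_n$. The rest consists of elementary integrals.
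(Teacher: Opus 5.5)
Your proof is correct. The index and sign bookkeeping you flagged as the main obstacle checks out:
\begin{itemize}
\item The $j$-th term of the hypothesis for $n-1$ yields exactly $\frac{(-\xi)^{j}r_{n-j}}{[j+1]_\theta!}(1-e^{\alpha_{j+1}y})$, using $(\alpha_j+1)\theta=\alpha_{j+1}$ and $[j]_\theta!\,[j+1]_\theta=[j+1]_\theta!$.
\item The collected multiples of $(1-e^{\theta y})$ give precisely the second expression for $r_n$.
\item That expression agrees with the first one, since $\prod_{m=1}^{j-1}(\alpha_m+1)=[j]_\theta!$ and $(-1)^{j-1}\xi^j=-(-\xi)^j$.
\item Both branches equal $c_n$ at $y=0$, so using the $x\ge0$ form on $(0,\infty)$ and the $x\le0$ form on $(\theta y,0)$ is consistent.
\end{itemize}

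The paper gives no argument for this lemma; it imports the formula from Lemmas 4 and 5 of \cite{AR}. Your route differs in that it is a self-contained induction on the one-step recursion $p_n^{\xi,y}(\theta)=\int_{\{x>\theta y\}}p_{n-1}^{\xi,x}(\theta)\phi(x)\,\d x$. This buys two things:
\begin{itemize}
\item It shows where the recursions come from. The $c_n$-recursion is the contribution of $x>0$, and the $r_n$-recursion is the coefficient of $(1-e^{\alpha_1 y})$ collected from the part over $(\theta y,0)$.
\item It handles $\theta=1$ and the endpoints $\xi\in\{0,1\}$ uniformly, since only $\alpha_j+1=[j+1]_\theta$ is used.
\end{itemize}
The citation only buys brevity.
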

\begin{Remark}
    Note that for all $j\ge 1$, $[j]_\theta!=\prod_{m=1}^{j-1} (\alpha_m + 1)$. For $\theta\neq1$, we have $\alpha_n=\frac{\theta(1-\theta^n)}{1-\theta}$.
\end{Remark}
The recursion for $r_n$ can be encoded in the following generating-function identity.
\begin{Lemma}\label{def_R}
    For $\theta>0$, $\xi\in[0,1]$ and $R(z)\coloneqq \sum_{n=1}^{\infty}r_nz^n$, we have, on the common disk of convergence,
    \begin{align*}
        R(z)=\xi z\frac{e_\theta((1-\xi)z)}{e_\theta(-\xi z)}.
    \end{align*}
\end{Lemma}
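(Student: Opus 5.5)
Multiplying the recursion for $r_n$ by $z^n$ and summing over $n\ge 2$ turns it into a functional equation for $R(z)$, which we then solve using the known series $e_\theta$. The recursion reads
\begin{align*}
\sum_{j=0}^{n-1}\frac{(-\xi)^j r_{n-j}}{[j]_\theta!}=\xi\frac{(1-\xi)^{n-1}}{[n-1]_\theta!},\qquad n\ge 2.
\end{align*}
The $j=0$ term is $r_n$, since $[0]_\theta!=1$. For $n=1$ the left-hand side is $r_1=\xi$ and the right-hand side is $\xi\,(1-\xi)^0/[0]_\theta!=\xi$, so the identity holds for all $n\ge1$.

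Multiplying by $z^n$ and summing over $n\ge1$, the left-hand side is a Cauchy product. Writing $n=j+k$ with $k\ge1$, $j\ge0$, it equals
\begin{align*}
\sum_{j=0}^\infty\frac{(-\xi z)^j}{[j]_\theta!}\sum_{k=1}^\infty r_kz^k=e_\theta(-\xi z)R(z).
\end{align*}
The right-hand side equals
\begin{align*}
\xi z\sum_{m=0}^\infty\frac{((1-\xi)z)^m}{[m]_\theta!}=\xi z\, e_\theta((1-\xi)z).
\end{align*}
Dividing gives the claim wherever $e_\theta(-\xi z)\neq0$.

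The only real issue is justifying the Cauchy product, i.e.\ convergence on a common disk. For $\theta>1$ the series $e_\theta$ is entire by Lemma~\ref{e_theta_entire_thetage1}. For $0<\theta<1$ it has radius $1/(1-\theta)$ by Lemma~\ref{e_theta_singularities_thetale1}, so both $e_\theta$-series converge absolutely for $|z|<1/(1-\theta)$.

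For $R$, we need a positive radius of convergence. Since $0\le r_n\le1$ (see below), $R$ has radius at least $1$, so the product identity holds as formal power series and hence on any disk where all three series converge absolutely. The bound on $r_n$ comes from
\begin{align*}
r_n=\P(\text{path survives }n-1\text{ steps, then specific event})\le1,
\end{align*}
using the probabilistic meaning of $r_n$ in Lemma~\ref{P_x_theta_positiv}. If that meaning is unavailable, an inductive bound $|r_n|\le C^n$ follows directly from the recursion, because $\sum_j\xi^j/[j]_\theta!$ is finite in the relevant regimes.

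Alternatively, one can avoid analytic concerns altogether. Work in the ring of formal power series: $e_\theta(-\xi z)$ has constant term $1$, so it is invertible there, and the identity $R=\xi z\,e_\theta((1-\xi)z)/e_\theta(-\xi z)$ holds formally. The right-hand side is analytic near $0$, since $e_\theta(-\xi z)\ne0$ near $0$. Its Taylor coefficients are therefore exactly the $r_n$, and the equality holds on the disk where both sides converge.
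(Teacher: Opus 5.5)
Your proof is correct and is essentially the generating-function calculation that the paper does not write out but delegates to \cite{AR}, Lemma 6. Rewriting the recursion as $\sum_{j=0}^{n-1}\frac{(-\xi)^j r_{n-j}}{[j]_\theta!}=\xi\frac{(1-\xi)^{n-1}}{[n-1]_\theta!}$ for all $n\ge1$ and inverting $e_\theta(-\xi z)$ in the ring of formal power series is rigorous on its own, so you can drop the appeal to a ``probabilistic meaning'' of $r_n$: the paper provides no such interpretation, and the bound is not needed. The paper's own proof instead spends its effort identifying the radius of convergence of $R$ in each regime (via the nearest pole of $e_\theta((1-\xi)z)$ for $\theta<1$, or the nearest zero of $e_\theta(-\xi z)$ for $\theta>1$); the statement does not require this, but it is used later in Lemmas~\ref{gf_pos_theta_le1} and \ref{gf_pos_theta_ge1}.
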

\begin{proof}
        See \cite{AR}, Lemma 6 for the calculation. For $\xi=0$, we have $R\equiv 0$ and $R$ has an infinite radius of convergence. For $\theta=1$, $R(z)=\xi ze^z$ is entire. For $0<\xi<1$ and $\theta<1$, by Lemma~\ref{e_theta_singularities_thetale1}, the radius of convergence is given by the singularity with smallest modulus of $e_\theta((1-\xi)z)$. For $\xi=1$ and $\theta<1$, $e_\theta(0)=1$ has no singularities, so the radius of convergence is infinite. For $0<\xi\le1$ and $\theta> 1$, the function $e_\theta$ is entire by Lemma~\ref{e_theta_entire_thetage1}, so the radius of convergence is given by the zero with smallest modulus of $e_\theta(-\xi z)$. 
    \end{proof}
    For the remainder of Section~\ref{sec_theta_pos}, consider $\theta\neq 1$.
Combining Lemmas~\ref{P_x_theta_positiv} and \ref{def_R} yields the generating function for an arbitrary starting point.
\begin{Lemma}\label{gf_pos_theta}
   For $\theta>0$, $\theta\neq 1$ and $\xi\in[0,1]$, we have
    \begin{align*}
      \hat{P}_{\xi,y,\theta}(z)=\begin{cases}
        \sum_{j=0}^\infty \frac{((1-\xi)z)^j}{[j]_\theta!}e^{-\frac{\theta(1-\theta^j)}{1-\theta}y},&y>0,
          \\\frac{e_\theta((1-\xi)z)}{e_\theta(-\xi z)}\sum_{j=0}^\infty \frac{(-\xi z)^{j}}{[j]_\theta!}e^{\frac{\theta(1-\theta^j)}{1-\theta}y},~~&y\le 0
      \end{cases}
    \end{align*}
    on the common disk of convergence and at most within the radius of convergence of $R$ defined in Lemma~\ref{def_R}.
\end{Lemma}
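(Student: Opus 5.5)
The plan is to insert the explicit formula of Lemma~\ref{P_x_theta_positiv} into $\hat P_{\xi,y,\theta}(z)=1+\sum_{n\ge1}p_n^{\xi,y}(\theta)z^n$, treating $y>0$ and $y\le 0$ separately. Throughout we use $c_n=(1-\xi)^n/[n]_\theta!$ and $\alpha_j=\theta(1-\theta^j)/(1-\theta)$, which also holds for $j=0$ with $\alpha_0=0$.

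For $y>0$ the computation is immediate. We have
\begin{align*}
\hat P_{\xi,y,\theta}(z)=1+\sum_{n\ge1}\frac{((1-\xi)z)^n}{[n]_\theta!}e^{-\alpha_n y},
\end{align*}
and the term $j=0$ equals $1$, which gives the first line. Absolute convergence holds at least where $e_\theta(|(1-\xi)z|)$ converges, because $e^{-\alpha_n y}\le1$.

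For $y\le0$ we write $p_n^{\xi,y}=c_n+\sum_{j=1}^n \frac{(-\xi)^{j-1}r_{n-j+1}}{[j]_\theta!}-\sum_{j=1}^n\frac{(-\xi)^{j-1}r_{n-j+1}}{[j]_\theta!}e^{\alpha_j y}$.

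1. Multiply by $z^n$ and sum over $n\ge1$. Each double sum is a Cauchy product. With the index $k=n-j+1\ge1$ we have $z^n=z^{j-1}z^k$, so
\begin{align*}
\sum_{n\ge1}z^n\sum_{j=1}^n\frac{(-\xi)^{j-1}r_{n-j+1}}{[j]_\theta!}g_j=R(z)\,\frac{1}{z}\sum_{j\ge1}\frac{(-\xi z)^{j-1}\cdot z}{[j]_\theta!}g_j\cdot\frac1z .
\end{align*}
More precisely, the sum equals $\frac{R(z)}{-\xi z}\sum_{j\ge1}\frac{(-\xi z)^j}{[j]_\theta!}g_j$ for $\xi>0$, with $g_j\in\{1,e^{\alpha_j y}\}$.

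2. By Lemma~\ref{def_R}, $\frac{R(z)}{-\xi z}=-\frac{e_\theta((1-\xi)z)}{e_\theta(-\xi z)}$. Write $E:=e_\theta((1-\xi)z)/e_\theta(-\xi z)$ and $S(y):=\sum_{j\ge0}\frac{(-\xi z)^j}{[j]_\theta!}e^{\alpha_j y}$. The $g_j=1$ part then contributes $-E\,(e_\theta(-\xi z)-1)$, and the $g_j=e^{\alpha_j y}$ part contributes $+E\,(S(y)-1)$.

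3. The $c_n$ part together with the $n=0$ term $1$ gives $e_\theta((1-\xi)z)=E\,e_\theta(-\xi z)$.

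4. Adding the three contributions gives
\begin{align*}
E\,e_\theta(-\xi z)-E\,e_\theta(-\xi z)+E-E+E\,S(y)=E\,S(y),
\end{align*}
which is the claimed second line.

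5. In the case $\xi=0$ we have $R\equiv0$ and $S(y)=1$, so both sides reduce to $e_\theta(z)$ directly and the identity also holds.

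The only delicate point is justifying the interchange of summation in the Cauchy products. This requires absolute convergence of $\sum r_k z^k$, which holds inside the radius of convergence of $R$. It also requires absolute convergence of $\sum(-\xi z)^j e^{\alpha_j y}/[j]_\theta!$, which is dominated by $e_\theta(\xi|z|)$ since $e^{\alpha_j y}\le 1$ for $y\le0$. Both conditions hold on the stated common disk, so Fubini/Tonelli for absolutely convergent double series applies. For $\theta>1$ the quantity $e_\theta$ is entire, and for $\theta<1$ it converges for $|z|<1/((1-\theta)\max(\xi,1-\xi))$. Hence the restriction to the common disk within the radius of convergence of $R$ suffices.
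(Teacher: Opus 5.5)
Your proposal is correct and follows essentially the same route as the paper: you substitute Lemma~\ref{P_x_theta_positiv}, reorganize the double sum as a Cauchy product against $R(z)$, insert Lemma~\ref{def_R}, and justify the interchange by the bound $e_\theta(\xi|z|)\sum_k|r_k||z|^k$. One slip: the first display in your step~1 carries a spurious extra factor $1/z$, but the formula you state immediately after it, $\frac{R(z)}{-\xi z}\sum_{j\ge1}\frac{(-\xi z)^j}{[j]_\theta!}g_j$, is the right one and is what you actually use.
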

\begin{proof}
    For $y\ge 0$, Lemma~\ref{P_x_theta_positiv} yields
    \begin{align*}
        \hat{P}_{\xi,y,\theta}(z)
        &=1+\sum_{j=1}^\infty c_je^{-\alpha_jy}z^j
        =\sum_{j=0}^\infty \frac{((1-\xi)z)^j}{[j]_\theta!}e^{-\frac{\theta(1-\theta^j)}{1-\theta}y}.
    \end{align*}
    For $y\le 0$, Lemma~\ref{P_x_theta_positiv} and Lemma~\ref{def_R} yield
    \begin{align*}
        \hat{P}_{\xi,y,\theta}(z)-1
        &=\sum_{n=1}^\infty \sum_{j=1}^n \frac{(-\xi)^{j-1} r_{n-j+1}}{[j]_\theta!} \left(1 - e^{\alpha_j y} \right)z^n+\sum_{n=1}^\infty c_nz^n
         \\&=\sum_{j=1}^\infty \frac{(-\xi)^{j-1}}{[j]_\theta!} \left(1 - e^{\alpha_j y} \right)z^{j-1}\sum_{n=j}^\infty r_{n-j+1}z^{n-j+1}+\sum_{n=1}^\infty \frac{((1-\xi)z)^n}{[n]_\theta!}
        \\&=\sum_{j=1}^\infty \frac{(-\xi z)^{j-1} }{[j]_\theta!} \left(1 - e^{\alpha_j y} \right)R(z)+e_\theta((1-\xi)z)-1
        \\&=R(z)\left(\sum_{j=1}^\infty \frac{(-\xi z)^{j-1} }{[j]_\theta!}-\sum_{j=1}^\infty \frac{(-\xi z)^{j-1}}{[j]_\theta!}e^{\alpha_j y}\right)+e_\theta((1-\xi)z)-1
        \\&= \frac{e_\theta((1-\xi)z)}{e_\theta(-\xi z)}\left(-(e_\theta(-\xi z)-1)-\xi z\sum_{j=1}^\infty \frac{(-\xi z)^{j-1}}{[j]_\theta!}e^{\alpha_j y}\right)+e_\theta((1-\xi)z)-1
        \\&=e_\theta((1-\xi)z)\left(-1+\frac{1}{e_\theta(-\xi z)}+ \frac{1}{e_\theta(-\xi z)}\sum_{j=1}^\infty \frac{(-\xi z)^{j}}{[j]_\theta!}e^{\alpha_j y}+1\right)-1
        \\&=\frac{e_\theta((1-\xi)z)}{e_\theta(-\xi z)}\sum_{j=0}^\infty \frac{(-\xi z)^{j}}{[j]_\theta!}e^{\frac{\theta(1-\theta^j)}{1-\theta}y}-1.
    \end{align*}
   For $z$ in the common disk of absolute convergence, we have
\begin{align*}
    \sum_{n=1}^\infty\sum_{j=1}^n
    \left|\frac{(-\xi)^{j-1}r_{n-j+1}}{[j]_\theta!} \bigl(1-e^{\alpha_jy}\bigr)z^n
    \right|
    \leq\left(\sum_{j=1}^\infty\frac{\xi^{j-1}|z|^{j-1}}{[j]_\theta!}\right)
    \left( \sum_{k=1}^\infty |r_k||z|^k\right)
    <\infty.
\end{align*}
Thus, the order of summation in the calculations above may be exchanged. 
\end{proof}
\subsection{The case \texorpdfstring{$0<\theta<1$}{0<theta<1}}\label{sec_thetale1}
In this subsection, assume $0<\theta<1$, $\xi<1$ and set $\lambda\coloneqq (1-\xi)(1-\theta)$. The case $0<\theta<1$ and $\xi=1$ is treated in Section~\ref{trivialcases}. First, we consider $y>0$ and the rate of decay of the $(p_n^{\xi,y}(\theta))_n$ from Lemma~\ref{P_x_theta_positiv}.
\begin{Lemma}\label{asymptotic_pos_theta_pos_y}
  For $0<\theta<1$, $\xi\in[0,1)$ and $y>0$, as  $n\to\infty$,
    \begin{align*}
        p_n^{\xi,y}(\theta)\sim \frac{1}{\lambda(\theta;\theta)_\infty}e^{\frac{\theta}{\theta-1}y}\lambda^{n+1}.
    \end{align*}
 \end{Lemma}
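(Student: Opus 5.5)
For $y>0$ I would use the explicit formula of Lemma~\ref{P_x_theta_positiv} directly; no generating function or singularity analysis is needed. It gives $p_n^{\xi,y}(\theta)=c_ne^{-\alpha_ny}$ with $c_n=(1-\xi)^n/[n]_\theta!$ and $\alpha_n=\theta(1-\theta^n)/(1-\theta)$. I would treat the two factors separately.

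\emph{The exponential factor.} Since $0<\theta<1$, we have $\alpha_n\to\theta/(1-\theta)$ as $n\to\infty$. Hence
$$e^{-\alpha_ny}\to e^{-\frac{\theta}{1-\theta}y}=e^{\frac{\theta}{\theta-1}y}.$$

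\emph{The constant $c_n$.} By the definition of the $q$-factorial, $[n]_\theta!=(\theta;\theta)_n/(1-\theta)^n$. Therefore
$$c_n=\frac{\bigl((1-\xi)(1-\theta)\bigr)^n}{(\theta;\theta)_n}=\frac{\lambda^n}{(\theta;\theta)_n}.$$
Because $0<\theta<1$, the product $(\theta;\theta)_n$ converges to $(\theta;\theta)_\infty>0$, which is nonzero since $\sum_k\theta^k<\infty$ and every factor $1-\theta^k$ is positive. Thus $c_n\sim\lambda^n/(\theta;\theta)_\infty$.

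Multiplying the two limits gives
$$p_n^{\xi,y}(\theta)\sim\frac{\lambda^n}{(\theta;\theta)_\infty}e^{\frac{\theta}{\theta-1}y}=\frac{1}{\lambda(\theta;\theta)_\infty}e^{\frac{\theta}{\theta-1}y}\lambda^{n+1},$$
which is the claim. The assumption $\xi<1$ is needed only to ensure $\lambda>0$.

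There is no real obstacle here. The only point needing care is that $(\theta;\theta)_\infty$ is strictly positive, and this follows from the absolute convergence of $\sum_k\theta^k$.
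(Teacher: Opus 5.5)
Your proposal is correct and follows the paper's proof essentially verbatim. The paper also inserts the explicit formula $p_n^{\xi,y}(\theta)=c_ne^{-\alpha_ny}$ from Lemma~\ref{P_x_theta_positiv}, rewrites $c_n=\lambda^n/(\theta;\theta)_n$, and lets $n\to\infty$ using $(\theta;\theta)_n\to(\theta;\theta)_\infty>0$ and $\alpha_n\to\theta/(1-\theta)$.
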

\begin{proof}
 By Lemma~\ref{P_x_theta_positiv},
    \begin{align*}
        p_n^{\xi,y}(\theta)
        &=c_ne^{-\alpha_ny}
        =\frac{(1-\xi)^n}{[n]_\theta!}e^{-\alpha_ny}=\frac{(1-\xi)^n(1-\theta)^n}{[n]_\theta!(1-\theta)^n}e^{-\frac{\theta(1-\theta^n)}{1-\theta}y}
        =\frac{\lambda^n}{(\theta;\theta)_n}e^{\frac{\theta(1-\theta^n)}{\theta-1}y}
        \\&\sim\frac{1}{\lambda(\theta;\theta)_\infty}e^{\frac{\theta}{\theta-1}y}\lambda^{n+1}.\qedhere
    \end{align*}
\end{proof}
Now, we treat the case $y\le0$ by deducing the asymptotic behavior of $(p_n^{\xi,y}(\theta))$ from its generating function.
To abbreviate, set 
$\ell(z)\coloneqq e_\theta((1-\xi)z)$ and 
$$v(z)\coloneqq v_y(z)\coloneqq  \frac{1}{e_\theta(-\xi z)}\sum_{j=0}^\infty \frac{(-\xi z)^{j}}{[j]_\theta!}e^{\frac{\theta(1-\theta^j)}{1-\theta}y}.$$
Note that the following result also holds for $\xi=1$.
\begin{Lemma}\label{singularities_pos_theta}
   For $0<\theta<1$, $\xi\in\big[0,1]$ and $y\le 0$, we have on the common disk of convergence
   \begin{align}\label{repr_v_thetale1}
    v(z)=e^{\frac{\theta y}{1-\theta}}\sum_{m=0}^\infty\frac{\left(-\frac{\theta y}{1-\theta}\right)^m}{m!}(-\xi z(1-\theta);\theta)_m.
\end{align}
The right-hand side defines an entire function of $z$ and hence gives an entire analytic continuation of $v$.
\end{Lemma}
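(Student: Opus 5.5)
Our plan mirrors the proof of Lemma~\ref{sin_e_and_cos_e_prelim}: expand the exponential factor, exchange the sums, and recognise a $q$-binomial product.

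\emph{Step 1 (expansion).} Write $\alpha_j=\frac{\theta(1-\theta^j)}{1-\theta}$, so that $e^{\alpha_j y}=e^{\frac{\theta y}{1-\theta}}e^{-\frac{\theta y}{1-\theta}\theta^j}$. Expanding the second factor as an exponential series gives
\begin{align*}
\sum_{j=0}^\infty \frac{(-\xi z)^{j}}{[j]_\theta!}e^{\alpha_j y}
=e^{\frac{\theta y}{1-\theta}}\sum_{m=0}^\infty\frac{\left(-\frac{\theta y}{1-\theta}\right)^m}{m!}\sum_{j=0}^\infty\frac{(-\xi z\theta^m)^j}{[j]_\theta!}
=e^{\frac{\theta y}{1-\theta}}\sum_{m=0}^\infty\frac{\left(-\frac{\theta y}{1-\theta}\right)^m}{m!}e_\theta(-\xi\theta^m z).
\end{align*}
The exchange of summation is justified for $|\xi z|<1/(1-\theta)$. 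In that range the double series is dominated by $e_\theta(\xi|z|)\exp\bigl(\frac{\theta|y|}{1-\theta}\bigr)$, which is finite by Lemma~\ref{e_theta_singularities_thetale1}.

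\emph{Step 2 (division by $e_\theta(-\xi z)$).} Use the product formula \eqref{e_theta_prod_thetale1}, $e_\theta(w)=1/(w(1-\theta);\theta)_\infty$, for both $w=-\xi z$ and $w=-\xi\theta^m z$. This yields
\begin{align*}
\frac{e_\theta(-\xi\theta^m z)}{e_\theta(-\xi z)}=\frac{(-\xi z(1-\theta);\theta)_\infty}{(-\xi z(1-\theta)\theta^m;\theta)_\infty}=(-\xi z(1-\theta);\theta)_m,
\end{align*}
which is a finite product. Substituting this into Step 1 gives \eqref{repr_v_thetale1} on the common disk of convergence. This disk is nonempty and contains a neighbourhood of $0$. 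By Lemma~\ref{e_theta_singularities_thetale1}, $e_\theta$ has no zeros, so the division is harmless. The case $\xi=0$ is trivial, since then every Pochhammer factor equals $1$ and $v\equiv e^{\frac{\theta y}{1-\theta}}e^{-\frac{\theta y}{1-\theta}}=1$.

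\emph{Step 3 (entire continuation).} This is the only point that needs an estimate. Each term $(-\xi z(1-\theta);\theta)_m$ is a polynomial in $z$. For $|z|\le r$ we bound
\begin{align*}
|(-\xi z(1-\theta);\theta)_m|\le\prod_{k=0}^{\infty}\bigl(1+r(1-\theta)\theta^k\bigr)=e_{\bar\theta}(r)\eqqcolon C_r<\infty,
\end{align*}
which is finite by Lemma~\ref{e_theta_entire_thetage1}, or simply because $\sum_k\theta^k<\infty$. The series in \eqref{repr_v_thetale1} is therefore dominated on $|z|\le r$ by $C_r\exp\bigl(\frac{\theta|y|}{1-\theta}\bigr)$. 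By the Weierstrass M-test it converges locally uniformly on $\C$ and hence defines an entire function. This function agrees with $v$ near $0$ by Step 2, so it is the analytic continuation of $v$. The case $\xi=1$ is covered without change, since $\xi$ enters only through the bounded factor $\xi\le1$.
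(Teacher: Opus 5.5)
Your proposal is correct and follows the paper's proof essentially step for step. Both arguments expand the exponential factor with the same $e_\theta(\xi|z|)$ domination on $|z|<1/(\xi(1-\theta))$, use the product formula to reduce $e_\theta(-\xi\theta^m z)/e_\theta(-\xi z)$ to the finite Pochhammer symbol $(-\xi z(1-\theta);\theta)_m$, and apply the Weierstrass M-test with a bound uniform in $m$. Your constant $C_r=e_{\bar\theta}(r)$ is just a relabeling of the paper's $(-\xi R(1-\theta);\theta)_\infty$ after using $\xi\le 1$.
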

\begin{proof}
If $\xi=0$, then $v(z)\equiv1$, so the claim is immediate.
In the remainder of the proof, assume $\xi>0$. Fix $|z|<\frac{1}{\xi(1-\theta)}$.  
Then, using the exponential series,
\begin{align*}
    \sum_{j=0}^\infty \frac{(-\xi z)^{j}}{[j]_\theta!}e^{\frac{\theta(1-\theta^j)}{1-\theta}y}
    &=e^{\frac{\theta y}{1-\theta}}\sum_{m=0}^\infty\frac{\left(-\frac{\theta y}{1-\theta}\right)^m}{m!}\sum_{j=0}^\infty \frac{(-\xi z\theta^m)^{j}}{[j]_\theta!}
    =e^{\frac{\theta y}{1-\theta}}\sum_{m=0}^\infty\frac{\left(-\frac{\theta y}{1-\theta}\right)^m}{m!}e_\theta(-\xi z\theta^m),
\end{align*}
where we exchanged the sums since 
\begin{align*}
    \sum_{m=0}^\infty\sum_{j=0}^\infty
    \frac{\left(-\frac{\theta y}{1-\theta}\right)^m}{m!}
    \frac{(\xi|z|\theta^m)^j}{[j]_\theta!}
    \leq\exp\left(-\frac{\theta y}{1-\theta}\right)e_\theta(\xi|z|)
    <\infty
\end{align*}
for $y\le 0$ and $|z|<1/(\xi(1-\theta))$.
In conclusion, 
\begin{align*}
    v(z)=e^{\frac{\theta y}{1-\theta}}\sum_{m=0}^\infty\frac{\left(-\frac{\theta y}{1-\theta}\right)^m}{m!}\frac{e_\theta(-\xi z\theta^m)}{e_\theta(-\xi z)}.
\end{align*}
Now we use the product representation of $e_\theta(\cdot)$ from \eqref{e_theta_prod_thetale1} and get
\begin{align*}
    \frac{e_\theta(-\xi z\theta^m)}{e_\theta(-\xi z)}
    =\frac{(-\xi z(1-\theta);\theta)_\infty}{(-\xi z\theta^m(1-\theta);\theta)_\infty}
    =(-\xi z(1-\theta);\theta)_m,
\end{align*}
which implies (\ref{repr_v_thetale1}).
By definition,
\begin{align*}
    |(-\xi z(1-\theta);\theta)_m|
    \le\prod_{k=0}^{m-1}(1+\xi|z|(1-\theta)\theta^k)
    \le\prod_{k=0}^{\infty}(1+\xi|z|(1-\theta)\theta^k)
   \! =\!(-\xi |z|(1-\theta);\theta)_\infty,
\end{align*}
where we used $1+\xi|z|(1-\theta)\theta^k\ge 1$ in the second step. Note that the upper bound is a constant in $m$. Moreover, since $\sum_{k=0}^\infty\xi|z|(1-\theta)\theta^k=\xi|z|(1-\theta)\sum_{k=0}^\infty\theta^k<\infty$ for $0<\theta<1$, it holds that $0\le (-\xi |z|(1-\theta);\theta)_\infty<\infty$. For every $R>0$, the preceding estimate holds uniformly for $|z|\leq R$, with the finite bound
$C_R\coloneqq(-\xi R(1-\theta);\theta)_\infty$. Hence, by the Weierstrass $M$-test, the right-hand side of the representation in \eqref{repr_v_thetale1} converges locally uniformly on $\C$ and therefore defines an entire function.
\end{proof}
The next lemma identifies the candidate eigenfunction, verifies its
eigenvalue equation, and proves its strict positivity. Since the
positivity argument relies on the eigenvalue equation, we establish
the latter directly rather than applying
Lemma~\ref{eigenfunction_lemma}.
\begin{Lemma}\label{h_eigenfct_pos_theta}
    For $0<\theta<1$ and $\xi\in[0,1)$, the function $h:\R\to\R$
    \begin{align}\label{def_h_pos_theta_2}
     h(y)\coloneqq h_{\theta,\xi}(y)\coloneqq  \begin{cases}
          e^{\frac{\theta}{\theta-1}y},~~~&y>0,
          \\e^{\frac{\theta y}{1-\theta}}\sum_{m=0}^\infty\frac{\left(-\frac{\theta y}{1-\theta}\right)^m}{m!}\left(\frac{-\xi}{\lambda}(1-\theta);\theta\right)_m,~~~~&y\le 0,
      \end{cases}
  \end{align}
  satisfies the eigenvalue equation \eqref{eq_eigenfunction} with eigenvalue $\lambda$. Further,  $h(x)>0$ for all $x\in\R$.
\end{Lemma}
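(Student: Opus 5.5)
We verify the eigenvalue equation \eqref{eq_eigenfunction} separately for $x>0$ and $x\le 0$, and for $x\le 0$ we reduce it to a functional differential equation that also yields positivity. Throughout, $\lambda=(1-\xi)(1-\theta)$ and $\alpha_j=\frac{\theta(1-\theta^j)}{1-\theta}$.

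\emph{Case $x>0$.} Here $\theta x>0$, so only the positive part of $\phi$ contributes. Since $-y+\frac{\theta y}{\theta-1}=-\frac{y}{1-\theta}$, we get
$\int_{\theta x}^\infty (1-\xi)e^{-y}e^{\frac{\theta}{\theta-1}y}\,\d y=(1-\xi)(1-\theta)e^{-\frac{\theta x}{1-\theta}}=\lambda h(x)$.

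\emph{Case $x\le 0$.} Using the formula for $y>0$, the right-hand side of \eqref{eq_eigenfunction} equals $\lambda+\xi\int_{\theta x}^0 e^y h(y)\,\d y$. Both sides equal $\lambda$ at $x=0$, because $h(0)=1$ (only the $m=0$ term survives). Since $\frac{\d}{\d x}\int_{\theta x}^0 e^yh(y)\,\d y=-\theta e^{\theta x}h(\theta x)$, it therefore suffices to show that $h$ is $C^1$ on $(-\infty,0]$ and satisfies
\begin{align*}
\lambda h'(x)=-\theta\xi e^{\theta x}h(\theta x),\qquad x\le 0 .
\end{align*}

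\emph{Proof of the functional differential equation.} By Lemma~\ref{singularities_pos_theta}, for $y\le 0$ we have $h(y)=v_y(1/\lambda)$, and $z\mapsto v_y(z)$ is entire. Both sides of the equation are entire in $z$: the $y$-derivative of the locally uniformly convergent series is controlled by the same Weierstrass bound. So it suffices to prove the corresponding identity
\begin{align*}
\partial_y v_y(z)=-\theta\xi z\, e^{\theta y}v_{\theta y}(z)
\end{align*}
for small $|z|$ and then extend it by the identity theorem. For small $|z|$, set $c=-\xi z$ and $f_c(y)=\sum_j \frac{c^j}{[j]_\theta!}e^{\alpha_j y}$, so that $v_y(z)=f_c(y)/e_\theta(c)$ with a constant denominator. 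Termwise, $f_c'(y)=\sum_j \alpha_j\frac{c^j}{[j]_\theta!}e^{\alpha_j y}$. On the other hand, $e^{\theta y}f_c(\theta y)=\sum_j\frac{c^j}{[j]_\theta!}e^{\alpha_{j+1}y}$ because $\alpha_{j+1}=\theta(\alpha_j+1)$. Since $\alpha_{j+1}=\theta[j+1]_\theta$, we have $\alpha_{j+1}\frac{c^{j+1}}{[j+1]_\theta!}=\theta\frac{c^{j+1}}{[j]_\theta!}$, which gives $f_c'(y)=\theta c\,e^{\theta y}f_c(\theta y)$. Dividing by $e_\theta(c)$ and setting $z=1/\lambda$ yields the required equation.

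The step I expect to be the main obstacle is this extension, since $\xi/\lambda$ may exceed the radius of convergence $1/(1-\theta)$ of the $j$-series. It is handled by working at small $|z|$ and then using analyticity in $z$ together with the representation \eqref{repr_v_thetale1}.

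\emph{Positivity.} For $y>0$, positivity is clear, and for $\xi=0$ we have $h\equiv1$ on $(-\infty,0]$. Now let $\xi>0$ and suppose $h$ vanishes somewhere on $(-\infty,0]$. Since $h$ is continuous with $h(0)=1$, the zero set is closed and stays away from $0$, so there is a largest zero $x_0<0$. For $x\in[x_0,0]$ we have $\theta x\in(x_0,0]$, because $0<\theta<1$, and hence $h(\theta x)>0$. The functional differential equation then gives $h'\le 0$ on $[x_0,0]$. Thus $h(x_0)\ge h(0)=1$, which contradicts $h(x_0)=0$. Hence $h>0$ on $\R$.
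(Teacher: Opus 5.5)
Your proof is correct, but it verifies the eigenvalue equation by a different mechanism than the paper.

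\textbf{Eigenvalue equation.} The paper substitutes the $m$-series for $h$ into $\int_{\theta x}^0 \xi e^y h(y)\,\d y$ and integrates term by term, which produces incomplete-gamma expressions. It then uses $\lambda(Q_m-Q_{m+1})=\xi(\theta-1)\theta^mQ_m$, summation by parts, and a tail estimate on $Q_{N+1}$ to recover $\lambda h(x)$. You instead differentiate. You combine agreement at $x=0$ with the functional differential equation $\lambda h'(x)=-\theta\xi e^{\theta x}h(\theta x)$, obtain that equation from the $j$-series via $\alpha_{j+1}=\theta(\alpha_j+1)=\theta[j+1]_\theta$ for small $|z|$, and carry it to $z=1/\lambda$ by the identity theorem. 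This is the same device the paper uses for $\theta>1$ (the equation $s'(x)=\frac{\theta}{1-\bar\theta}e^{-\theta x}s(\theta x)$ in Lemma~\ref{thetage1_h_pos}), and it is shorter than the summation-by-parts computation.

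\textbf{The step you flagged as the main obstacle can be avoided.} Write $a=-\frac{\theta y}{1-\theta}$ and differentiate the $m$-series directly. This gives $\frac{\d h}{\d a}=e^{-a}\sum_m\frac{a^m}{m!}(Q_{m+1}-Q_m)$. Then $Q_{m+1}-Q_m=\frac{\xi}{\lambda}(1-\theta)\theta^mQ_m$ and $e^{\theta y}e^{-\theta a}=e^{-a}$ yield $\lambda h'(y)=-\theta\xi e^{\theta y}h(\theta y)$ at once, with no analytic continuation in $z$.

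\textbf{Positivity.} The paper spreads positivity from $[-r,\infty)$ to $[-r/\theta^n,\infty)$ using the integral equation. Your largest-zero argument uses monotonicity from the differential equation instead. The two are equivalent in substance. Indeed, $\lambda h(x)=\lambda+\xi\int_{\theta x}^0e^yh(y)\,\d y$ gives $h(x)\ge1$ as soon as $h\ge0$ on $(\theta x,0]$, so either route actually shows $h\ge 1$ on $(-\infty,0]$.
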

\begin{proof}
For $x>0$, we have $\theta x>0$, so that the computation for positive arguments is straightforward.
For $x\le0$, we have $\theta x\le0$, so \eqref{eq_eigenfunction} is equivalent to
  \begin{align}\label{eq_eigenvalue_concrete}
      \lambda h(x)=\int_{(\theta x,0)}\!\!\!\!\!\!\!\xi  e^yh(y)\d y+\int_{(0,\infty)}\!\!\!\!\!\!\!(1-\xi)  e^{\frac{1}{\theta-1}y}\d y=\int_{(\theta x,0)}\!\!\!\!\!\!\!\xi  e^yh(y)\d y+\lambda. 
  \end{align}
 For every $m\ge 0$, the substitution $t=\frac{y}{\theta-1}$ gives
  \begin{align*}
     \int_{(\theta x,0)} e^ye^{\frac{\theta y}{1-\theta}}\left(-\frac{\theta y}{1-\theta}\right)^m\d y
     &=\theta^m(\theta-1)\int_{-\frac{\theta x}{1-\theta}}^0 t^m e^{-t}\d t
     \\&=(\theta-1)\theta^mm!\left(e^{\frac{\theta x}{1-\theta}}\sum_{k=0}^m\frac{1}{k!}\left(\frac{-\theta x}{1-\theta}\right)^k-1\right).
  \end{align*}
  Set $Q_m\coloneqq \left(\frac{-\xi}{\lambda}(1-\theta);\theta\right)_m$. We have, by definition of the $q$-Pochhammer symbol, 
  \begin{align*}
      Q_{m+1}=Q_m\left(1+\frac{\xi}{\lambda}(1-\theta)\theta^m\right),
  \end{align*}
  so that
      $\lambda\left(Q_{m}-Q_{m+1}\right)=\xi(\theta-1)\theta^m Q_m$.
  For $h$ given in \eqref{def_h_pos_theta_2}, we thus have
  \begin{align}\label{calculation_eigenfunction}
      &\int_{(\theta x,0)}\xi  e^yh(y)\d y+\lambda \notag
      \\&=\sum_{m=0}^\infty\xi\frac{Q_m}{m!}\int_{(\theta x,0)} e^ye^{\frac{\theta y}{1-\theta}}\left(-\frac{\theta y}{1-\theta}\right)^m\d y+\lambda\notag
    \\&=\sum_{m=0}^\infty\frac{\xi(\theta-1)\theta^mQ_m}{m!}m!\left(e^{\frac{\theta x}{1-\theta}}\sum_{k=0}^m\frac{1}{k!}\left(\frac{-\theta x}{1-\theta}\right)^k-1\right)+\lambda\notag
    \\&=\lambda\sum_{m=0}^\infty \left(Q_{m}-Q_{m+1}\right)\left(e^{\frac{\theta x}{1-\theta}}\sum_{k=0}^m\frac{1}{k!}\left(\frac{-\theta x}{1-\theta}\right)^k-1\right)+\lambda,
  \end{align}
where one could exchange the sum and the integral since the sum is absolutely convergent by Lemma~\ref{singularities_pos_theta}. For every fixed $N\in\N$, it holds, using summation by parts,
\begin{align*}
    \sum_{m=0}^N \left(Q_{m}-Q_{m+1}\right)\sum_{k=0}^m\frac{1}{k!}\left(\frac{-\theta x}{1-\theta}\right)^k
    =\sum_{m=0}^NQ_m\frac{1}{m!}\left(\frac{-\theta x}{1-\theta}\right)^m-Q_{N+1}\sum_{k=0}^{N}\frac{1}{k!}\left(\frac{-\theta x}{1-\theta}\right)^k,
\end{align*}
and
\begin{align*}
    &\lambda\sum_{m=0}^N \left(Q_{m}-Q_{m+1}\right)\left(e^{\frac{\theta x}{1-\theta}}\sum_{k=0}^m\frac{1}{k!}\left(\frac{-\theta x}{1-\theta}\right)^k-1\right)+\lambda
   \\&=\lambda e^{\frac{\theta x}{1-\theta}}\sum_{m=0}^NQ_m\frac{1}{m!}\left(\frac{-\theta x}{1-\theta}\right)^m+\lambda Q_{N+1}\left(1-e^{\frac{\theta x}{1-\theta}}\sum_{k=0}^{N}\frac{1}{k!}\left(\frac{-\theta x}{1-\theta}\right)^k\right),
\end{align*}
where the second term converges to zero as $N\to\infty$: On the one hand, 
\begin{align*}
    |Q_{N+1}|\le \left(\frac{-\xi}{\lambda}(1-\theta);\theta\right)_\infty
\end{align*}
and on the other hand, $\lim_{N\to\infty} 1-e^{\frac{\theta x}{1-\theta}}\sum_{k=0}^{N}\frac{1}{k!}\left(\frac{-\theta x}{1-\theta}\right)^k=0$. In particular, letting $N\to\infty$ and combining with \eqref{calculation_eigenfunction}, we get
\begin{align*}
    &\int_{(\theta x,0)}\xi  e^yh(y)\d y+\lambda
    =\lambda e^{\frac{\theta x}{1-\theta}}\sum_{m=0}^\infty Q_m\frac{1}{m!}\left(\frac{-\theta x}{1-\theta}\right)^m
    =\lambda h(x).
\end{align*}
Therefore, this choice of $h$ satisfies the eigenvalue equation \eqref{eq_eigenvalue_concrete} and thus, \eqref{eq_eigenfunction}. It remains to show that $h$ is strictly positive. By definition, $h$ is continuous with $h(x)>0$ for $x>0$ and $h(0)=1$. Therefore, there is some $r>0$ such that $h(x)>0$ for all $x\ge-r$. For any $x$ with $\theta x\ge -r$, the eigenvalue equation \eqref{eq_eigenfunction} now implies $\lambda h(x)>0$, so we get $h(x)>0$ for $x\ge\frac{-r}{\theta}$. Iterating this argument, we get $h(x)>0$ for $x\ge\frac{-r}{\theta^n}$ for all $n\in\N$ and the claim follows with $\theta^n\to0$ for $n\to\infty$.
\end{proof}
We can now identify the unique dominant singularity for $y\leq 0$ and subsequently extract the coefficient asymptotics.
\begin{Lemma}\label{gf_pos_theta_le1}
  For $0<\theta<1$, $\xi\in[0,1)$ and $y\le 0$, we have
    \begin{align*}
      \hat{P}_{\xi,y,\theta}(z)= \ell(z)v(z)
    \end{align*}
    with radius of convergence $\frac{1}{\lambda}$. More concretely, the unique singularity with smallest modulus of $\hat{P}_{\xi,y,\theta}(z)$ is simple, isolated and given by $\frac{1}{\lambda}$.
\end{Lemma}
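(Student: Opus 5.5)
By Lemma~\ref{gf_pos_theta}, for $y\le 0$ and $z$ in the common disk of convergence we have $\hat{P}_{\xi,y,\theta}(z)=\ell(z)v(z)$ with $\ell(z)=e_\theta((1-\xi)z)$. By Lemma~\ref{singularities_pos_theta}, $v$ extends to an entire function through the representation \eqref{repr_v_thetale1}. Therefore the product $\ell v$ is a meromorphic continuation of $\hat{P}_{\xi,y,\theta}$ to all of $\C$, and every singularity of $\hat P$ is a pole of $\ell$ at which $v$ does not vanish.

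The first step locates the poles of $\ell$. Lemma~\ref{e_theta_singularities_thetale1} gives $e_\theta(w)$ simple poles exactly at $w=\frac{1}{(1-\theta)\theta^n}$, $n\in\N_0$. Since $\xi<1$, the poles of $\ell$ are therefore exactly the points
\[
z=\frac{1}{(1-\xi)(1-\theta)\theta^n}=\frac{1}{\lambda\theta^n},\qquad n\in\N_0 .
\]
These are real, positive, simple and isolated. Because $0<\theta<1$, the pole of smallest modulus is $z_0=\frac1\lambda$, and the next one, $\frac{1}{\lambda\theta}$, lies strictly further out. Hence $\hat P$ is holomorphic on $|z|<\frac1\lambda$ and has at most a simple pole at $\frac1\lambda$, with no other singularity on the circle $|z|=\frac1\lambda$.

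The second step, which is the main obstacle, is to show that the pole at $\frac1\lambda$ is not cancelled, i.e.\ $v(\frac1\lambda)\neq0$. For this I compare \eqref{repr_v_thetale1} with the definition \eqref{def_h_pos_theta_2} of $h$. Setting $z=\frac1\lambda$ turns the Pochhammer factor $(-\xi z(1-\theta);\theta)_m$ into $\left(\frac{-\xi}{\lambda}(1-\theta);\theta\right)_m$, so that
\[
v\!\left(\tfrac1\lambda\right)=h(y),\qquad y\le 0 .
\]
Lemma~\ref{h_eigenfct_pos_theta} gives $h(y)>0$ for all $y$, so $v(\frac1\lambda)>0$. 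Since $\ell$ has a simple pole with nonzero residue at $\frac1\lambda$, the product $\ell v$ also has a genuine simple pole there.

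Consequently, $\frac1\lambda$ is the unique singularity of smallest modulus, it is a simple isolated pole, and the radius of convergence of the power series of $\hat P_{\xi,y,\theta}$ equals $\frac1\lambda$. Identifying this power series with its meromorphic continuation is justified because both agree on a small disk. For later use, one can write $\hat P=\frac{f(z)}{1/\lambda-z}$ with $f$ analytic on $|z|<\frac{1}{\lambda\theta}$ and $f(\frac1\lambda)\neq0$, which is exactly the hypothesis of Lemma~\ref{asymptotic_powerseries}.
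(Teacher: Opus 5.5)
Your proof is correct and follows the paper's argument. You locate the simple poles of $\ell$ at $\frac{1}{\lambda\theta^n}$, use the entire continuation of $v$ from Lemma~\ref{singularities_pos_theta}, and get $v(\frac1\lambda)=h(y)>0$ from Lemma~\ref{h_eigenfct_pos_theta}. The only difference is in extending the identity $\hat P_{\xi,y,\theta}=\ell v$ to all of $|z|<\frac1\lambda$: the paper notes that $R$ from Lemma~\ref{def_R} also has radius of convergence $\frac1\lambda$, whereas you use the equivalent (and slightly cleaner) fact that the Taylor series at $0$ of a function holomorphic on $|z|<\frac1\lambda$ converges there.
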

\begin{proof}
    For $y\le 0$, Lemma~\ref{gf_pos_theta} implies $\hat{P}_{\xi,y,\theta}(z)=\ell(z)v(z)$ within the common radius of convergence, but at most within the radius of convergence of $R$. By Lemma~\ref{e_theta_singularities_thetale1}, the singularities of the function $\ell$ are isolated and simple, and the unique singularity with smallest modulus is given by $\frac{1}{\lambda}$. By Lemma~\ref{singularities_pos_theta}, $v$ has an analytic continuation which is strictly positive at $z=\frac{1}{\lambda}$ by Lemma~\ref{h_eigenfct_pos_theta}. By the same argument, the radius of convergence of $R(z)$ used in Lemma~\ref{def_R} is also $\frac{1}{\lambda}$ for $\xi>0$ and it is known to be infinite for $\xi=0$, so that $\hat{P}_{\xi,y,\theta}(z)=\ell(z)v(z)$ holds for all $|z|<\frac{1}{\lambda}$. In particular, $\frac{1}{\lambda}$ is the unique singularity with smallest modulus of $\hat{P}_{\xi,y,\theta}(z)$ and it is simple and isolated.
\end{proof}
\begin{Lemma}\label{asymptotic_pos_theta_neg_y}
  For $0<\theta<1$, $\xi\in[0,1)$ and $y\le 0$, we have, with $h$ defined in \eqref{def_h_pos_theta_2}, as $n\to\infty$,
  \begin{align*}
       p_n^{\xi,y}(\theta)\sim \frac{1}{\lambda(\theta;\theta)_\infty}\,h(y)\lambda^{n+1}.
  \end{align*}
\end{Lemma}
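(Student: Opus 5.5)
We apply Lemma~\ref{asymptotic_powerseries} to the factorisation $\hat{P}_{\xi,y,\theta}(z)=\ell(z)v(z)$ from Lemma~\ref{gf_pos_theta_le1}, with $z_0\coloneqq 1/\lambda$.

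By the product representation \eqref{e_theta_prod_thetale1},
\begin{align*}
\ell(z)=e_\theta((1-\xi)z)=\frac{1}{1-\lambda z}\prod_{n=1}^\infty\frac{1}{1-\lambda z\theta^n}=\frac{1}{(1-\lambda z)(\lambda z\theta;\theta)_\infty}.
\end{align*}
Hence, for $|z|<1/\lambda$,
\begin{align*}
\hat{P}_{\xi,y,\theta}(z)=\frac{f(z)}{z_0-z},\qquad f(z)\coloneqq \frac{v(z)}{\lambda\,(\lambda z\theta;\theta)_\infty}.
\end{align*}

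Next we check that $f$ is holomorphic on a larger disk. By Lemma~\ref{singularities_pos_theta}, $v$ is entire. The factor $1/(\lambda z\theta;\theta)_\infty$ has its poles at $z=1/(\lambda\theta^n)$ for $n\ge1$, and the smallest of these is $1/(\lambda\theta)$. So $f$ is analytic on $|z|<z_1\coloneqq 1/(\lambda\theta)$, and $z_1>z_0$ because $\theta<1$.

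We also need $f(z_0)\ne0$. At $z=z_0$ the denominator equals $\lambda(\theta;\theta)_\infty$, which is positive. For the numerator, comparing the representation \eqref{repr_v_thetale1} at $z=1/\lambda$ with \eqref{def_h_pos_theta_2} shows that $v(1/\lambda)=h(y)$. This is because $-\xi z(1-\theta)$ becomes $-\frac{\xi}{\lambda}(1-\theta)$. Lemma~\ref{h_eigenfct_pos_theta} gives $h(y)>0$, so
\begin{align*}
f(z_0)=\frac{h(y)}{\lambda(\theta;\theta)_\infty}\neq 0.
\end{align*}

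Lemma~\ref{asymptotic_powerseries} now yields
\begin{align*}
p_n^{\xi,y}(\theta)\sim \frac{f(z_0)}{z_0}\,z_0^{-n}=\lambda f(z_0)\lambda^{n}=\frac{h(y)}{\lambda(\theta;\theta)_\infty}\lambda^{n+1},
\end{align*}
which is the claim.

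The step that needs the most care is justifying that the identity $\hat P=\ell v$ holds on the full disk $|z|<1/\lambda$, not just near $0$. This is exactly the radius-of-convergence statement for $R$ in Lemma~\ref{gf_pos_theta_le1}, which we take as given. Beyond that, the argument only requires carefully matching the constant $v(1/\lambda)$ with $h(y)$.
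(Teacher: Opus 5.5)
Your proposal is correct and is essentially the paper's own proof. Both use the factorisation $\hat P_{\xi,y,\theta}=\ell v$ from Lemma~\ref{gf_pos_theta_le1}, the choice $f(z)=(\frac{1}{\lambda}-z)\ell(z)v(z)=\frac{v(z)}{\lambda(\lambda z\theta;\theta)_\infty}$, the identification $v_y(1/\lambda)=h(y)>0$ via \eqref{repr_v_thetale1} and Lemma~\ref{h_eigenfct_pos_theta}, and Lemma~\ref{asymptotic_powerseries}; the only difference is that you exhibit $z_1=1/(\lambda\theta)$ explicitly from the product formula, whereas the paper obtains some $z_1>1/\lambda$ from the isolatedness of the pole asserted in Lemma~\ref{gf_pos_theta_le1}.
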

\begin{proof}
    We want to apply Lemma~\ref{asymptotic_powerseries}, so we check that there is a function $f$ with
    \begin{align}
        \hat{P}_{\xi,y,\theta}(z)=\frac{f(z)}{\frac{1}{\lambda}-z}~~~~\text{for all}~|z|<\frac{1}{\lambda}\label{eq_Lemma_1},
    \end{align}
   where $f$ is analytic on $|z|<z_1$ with $\frac{1}{\lambda}<z_1$ and $f\left(\frac{1}{\lambda}\right)\neq 0$. 
   By Lemma~\ref{gf_pos_theta_le1}, $f(z)=\left(\frac{1}{\lambda}-z\right)\ell(z)v(z)$  is a suitable choice for \eqref{eq_Lemma_1}. Lemma~\ref{gf_pos_theta_le1} yields that $\frac{1}{\lambda}$ is the unique singularity with smallest modulus of $\ell(z)v(z)$ and it is simple and isolated. In particular, there is some $z_1>\frac{1}{\lambda}$ such that the function $f$ is analytic on $|z|<z_1$ and, by \eqref{repr_v_thetale1} and Lemma~\ref{h_eigenfct_pos_theta}, $h(y)=v_y\left(\frac{1}{\lambda}\right)>0$. The remaining part now follows from the representation in \eqref{e_theta_prod_thetale1}
   \begin{align*}
       \left(\frac{1}{\lambda}-z\right)\ell(z)=\left(\frac{1}{\lambda}-z\right)\frac{1}{1-z\lambda}\prod_{n=1}^\infty\frac{1}{1-z\lambda \theta^n}=\frac{1}{\lambda}\prod_{n=1}^\infty\frac{1}{1-z\lambda \theta^n},
   \end{align*}
   which is equal to $\frac{1}{\lambda}\prod_{n=1}^\infty\frac{1}{1-\theta^n}=\frac{1}{\lambda(\theta;\theta)_\infty}>0$ at $z=\frac{1}{\lambda}$, so $f\left(\frac{1}{\lambda}\right)=\frac{1}{\lambda(\theta;\theta)_\infty}h(y)>0$.
  Thus, $f$ satisfies the  conditions for Lemma \ref{asymptotic_powerseries}, which now yields $p_n^{\xi,y}(\theta)\sim \frac{1}{\lambda(\theta;\theta)_\infty}h(y)\lambda^{n+1}$.
\end{proof}
\begin{proof}[Proof of Theorem~\ref{maintheorem} (i)]
    By Lem\-ma~\ref{h_eigenfct_pos_theta}, $h$ satisfies the eigenvalue equation \eqref{eq_eigenfunction}. To use Lem\-ma~\ref{Kernel_lim_process}, first note that the asymptotics from both Lemma~\ref{asymptotic_pos_theta_pos_y} and Lemma~\ref{asymptotic_pos_theta_neg_y} have the same constant $c(\theta,\xi)=\frac{1}{\lambda(\theta;\theta)_\infty}$. Moreover, Equation (5) and Theorem 1 (d) from \cite{AR}
   yield that there is some constant $c_2(\theta,\xi)$ such that for $n\to\infty$, $p_n^{\xi}(\theta)\sim c_2(\theta,\xi)\lambda^{n+1}$, so
      Lemma~\ref{Kernel_lim_process} yields the claim.
\end{proof}
\subsection{The case \texorpdfstring{$\theta>1$}{theta>1}}\label{sec_thetage1}
Now consider $\theta >1$, $\xi>0$. The special case $\xi=0$ is treated in Section~\ref{trivialcases}. Recall the notation $\bar\theta\coloneqq\theta^{-1}\in(0,1)$ and fix $\lambda\coloneqq (1-\bar\theta)\xi$. 

The following result also holds for $\xi=0$.
\begin{Lemma}\label{asymptotic_pos_theta_ge1_pos_y}
  For $\theta >1$, $\xi\in[0,1]$ and $y\ge0$, the persistence probabilities $\left(p_n^{\xi,y}(\theta)\right)_{n\in\N_0}$ have a superexponential decay.
 \end{Lemma}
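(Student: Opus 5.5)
The argument rests directly on the explicit formula of Lemma~\ref{P_x_theta_positiv}. For $y\ge 0$ it gives
\begin{align*}
p_n^{\xi,y}(\theta)=c_ne^{-\alpha_n y}=\frac{(1-\xi)^n}{[n]_\theta!}e^{-\alpha_n y}.
\end{align*}
Since $y\ge 0$ and $\alpha_n=\sum_{i=1}^n\theta^i>0$, the exponential factor is at most $1$. Hence
\begin{align*}
0\le p_n^{\xi,y}(\theta)\le \frac{(1-\xi)^n}{[n]_\theta!}\le\frac{1}{[n]_\theta!}.
\end{align*}
It therefore suffices to show that $[n]_\theta!$ grows faster than any exponential, that is, $\frac{1}{n}\log [n]_\theta!\to\infty$.

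For $\theta>1$ I would use the identity already derived in the proof of Lemma~\ref{e_theta_entire_thetage1}, namely
\begin{align*}
[n]_\theta!=\theta^{\frac{n(n-1)}{2}}\frac{(\bar\theta;\bar\theta)_n}{(1-\bar\theta)^n}.
\end{align*}
The factor $(\bar\theta;\bar\theta)_n$ decreases to $(\bar\theta;\bar\theta)_\infty>0$, so it is bounded below by a positive constant. Since $0<1-\bar\theta<1$, the factor $(1-\bar\theta)^{-n}$ is at least $1$. This yields $[n]_\theta!\ge (\bar\theta;\bar\theta)_\infty\,\theta^{n(n-1)/2}$, and consequently
\begin{align*}
p_n^{\xi,y}(\theta)\le \frac{\theta^{-n(n-1)/2}}{(\bar\theta;\bar\theta)_\infty}.
\end{align*}
This bound is Gaussian-type decay in $n$, so it decays faster than $\mu^n$ for every $\mu>0$, which is the meaning of superexponential decay. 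For $\xi=1$ the sequence is simply $0$ for $n\ge1$, which trivially fits the claim.

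An alternative route would be to note that $\sum_n z^n/[n]_\theta!=e_\theta(z)$ is entire by Lemma~\ref{e_theta_entire_thetage1}, so the coefficients decay faster than any geometric sequence. The direct bound above is cleaner and also gives a rate.

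There is no real obstacle here. The only points needing care are the lower bound on $(\bar\theta;\bar\theta)_n$, which holds since each factor $1-\bar\theta^k$ lies in $(0,1)$ and the infinite product converges, and the observation that the exponential factor is bounded by $1$ because $y\ge0$.
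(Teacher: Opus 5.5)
Your proposal is correct and follows the paper's proof: both start from $p_n^{\xi,y}(\theta)=c_ne^{-\alpha_ny}$ in Lemma~\ref{P_x_theta_positiv}, rewrite $[n]_\theta!=\theta^{n(n-1)/2}(\bar\theta;\bar\theta)_n/(1-\bar\theta)^n$, and identify $\theta^{-n(n-1)/2}$ as the superexponential factor, with the other factors bounded. Your explicit bound $p_n^{\xi,y}(\theta)\le \theta^{-n(n-1)/2}/(\bar\theta;\bar\theta)_\infty$ just states that same factor-by-factor argument in a more quantitative form.
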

\begin{proof}
For $\theta>1$ and $y\ge 0$, we have, by Lemma~\ref{P_x_theta_positiv} and $[n]_\theta!=\frac{(\bar\theta;\bar\theta)_n}{(1-\bar\theta)^n}\theta^{\frac{n(n-1)}{2}}$,
    \begin{align*}
        p_n^{\xi,y}(\theta)
        &=c_ne^{-\alpha_ny}
        =\frac{(1-\xi)^n}{[n]_\theta!}e^{-\frac{\theta(1-\theta^n)}{1-\theta}y}
       =((1-\bar\theta)(1-\xi))^n\cdot \frac{1}{(\bar\theta;\bar\theta)_n}\cdot \theta^{-\frac{n(n-1)}{2}}\cdot e^{-\frac{\theta(1-\theta^n)}{1-\theta}y}.
    \end{align*}
The first term is an exponentially decaying factor. Since $\bar\theta<1$, we have $(\bar\theta;\bar\theta)_n\to (\bar\theta;\bar\theta)_\infty\in(0,\infty)$, so that the second term converges. As $\theta>1$, the third term decays superexponentially. The last term is bounded by $1$ (and decays superexponentially for $y>0$). This shows the claim.
\end{proof}
For the case $y\le 0$, abbreviate $\ell(z)\coloneqq\frac{1}{e_\theta(-\xi z)}$ and
\begin{align*}
  v(z)\coloneqq  v_y(z)\coloneqq  e_\theta((1-\xi)z)\sum_{j=0}^\infty \frac{(-\xi z)^{j}}{[j]_\theta!}e^{\frac{\theta(1-\theta^j)}{1-\theta}y}.
\end{align*}
\begin{Lemma}\label{v_entire_thetage1}
   For $\theta>1$, $\xi\in[0,1]$ and $y\le0$,  $v_y$ is entire.
\end{Lemma}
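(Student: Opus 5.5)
The entire function $v_y$ is a product of two factors, so I will show that each factor is entire and conclude that the product is entire.

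\emph{First factor.} Since $\theta>1$, Lemma~\ref{e_theta_entire_thetage1} says that $e_\theta$ is entire. Hence $z\mapsto e_\theta((1-\xi)z)$ is entire for every $\xi\in[0,1]$.

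\emph{Second factor.} It remains to show that
\begin{align*}
    g_y(z)\coloneqq \sum_{j=0}^\infty \frac{(-\xi z)^{j}}{[j]_\theta!}e^{\frac{\theta(1-\theta^j)}{1-\theta}y}
\end{align*}
defines an entire function of $z$. I will argue exactly as in the proof of Corollary~\ref{sine_and_cose_prelim}.

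For $\theta>1$ and $j\ge0$, the exponent coefficient $\frac{\theta(1-\theta^j)}{1-\theta}=\alpha_j$ is nonnegative. Since $y\le 0$, this gives $0<e^{\alpha_j y}\le 1$. Therefore the $j$-th term of the series is bounded in modulus by $\frac{(\xi|z|)^j}{[j]_\theta!}$, and
\begin{align*}
    \sum_{j=0}^\infty \frac{(\xi|z|)^j}{[j]_\theta!}=e_\theta(\xi|z|)<\infty
\end{align*}
for every $z\in\C$, because the power series of $e_\theta$ has infinite radius of convergence by Lemma~\ref{e_theta_entire_thetage1}.

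On a disk $|z|\le R$, every term is bounded by $\frac{(\xi R)^j}{[j]_\theta!}$, and these bounds are summable. The Weierstrass $M$-test therefore gives locally uniform convergence on $\C$ of a series of polynomials, so $g_y$ is entire. Equivalently, $g_y$ is a power series in $z$ with coefficients $\frac{(-\xi)^j e^{\alpha_j y}}{[j]_\theta!}$, which are dominated by those of $e_\theta(\xi\,\cdot)$. By Cauchy--Hadamard its radius of convergence is then infinite.

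\emph{Conclusion.} A product of two entire functions is entire, so $v_y$ is entire. The case $\xi=0$ needs no separate treatment: there $g_y\equiv 1$ and the claim is immediate.

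There is no real obstacle. The only point to check is the sign $\alpha_j\ge0$, which together with $y\le0$ gives the bound $e^{\alpha_j y}\le 1$.
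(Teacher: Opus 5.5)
Your proof is correct and takes essentially the same route as the paper. You get entireness of $e_\theta((1-\xi)z)$ from Lemma~\ref{e_theta_entire_thetage1}, and you dominate the second series on $|z|\le R$ by $e_\theta(\xi R)$ using $e^{\alpha_j y}\le 1$ for $y\le 0$; your explicit check that $\alpha_j\ge 0$ for $\theta>1$ is a detail the paper leaves implicit.
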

\begin{proof}
    By Lemma~\ref{e_theta_entire_thetage1}, $z\mapsto e_\theta((1-\xi)z)$ is entire and, for $y\le 0$ and $z\in\C$, it holds
\begin{align*}
    \left|\sum_{j=0}^\infty \frac{(-\xi z)^{j}}{[j]_\theta!}e^{\frac{\theta(1-\theta^j)}{1-\theta}y}\right|
    \le\sum_{j=0}^\infty \frac{|\xi z|^{j}}{[j]_\theta!}e^{\frac{\theta(1-\theta^j)}{1-\theta}y}
     \le\sum_{j=0}^\infty \frac{|\xi z|^{j}}{[j]_\theta!}
     =e_\theta(|\xi z|),
\end{align*}
so for every $R>0$, the series is uniformly dominated on the compact disk $|z|\le R$ by the convergent series $e_\theta(\xi R)$. In conclusion, $v_y$ is entire for all $y\le 0$.
\end{proof}
For the rest of this section, let $\xi> 0$, so $\lambda>0$. Define $h:\R\to\R$ by
\begin{align}\label{def_h_thetage1}
    h(y)\coloneqq h_{\theta,\xi}(y)\coloneqq  \begin{cases}
          0,~~~&y>0,
          \\ v_y\left(\frac{1}{\lambda}\right)={e_\theta\left(\frac{1-\xi}{\lambda}\right)}\sum_{j=0}^\infty \frac{\left(\frac{-\xi}{\lambda}\right)^{j}}{[j]_\theta!}e^{\frac{\theta(1-\theta^j)}{1-\theta}y},~~~~&y\le 0.
      \end{cases}
  \end{align}
  Evaluating $v_y$ at the candidate dominant pole gives the eigenfunction; the remaining key point is its positivity.
\begin{Lemma}\label{thetage1_h_pos}
$h$ is continuous and non-negative on $\R$ with $h(0)=0$ and $h(x)>0$ for all $x\in(-\infty,0)$.
\end{Lemma}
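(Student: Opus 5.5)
The plan is to reduce everything to the normalized series
\[
g(y)\coloneqq \sum_{j=0}^\infty \frac{(-c)^{j}}{[j]_\theta!}e^{\alpha_j y},\qquad c\coloneqq \frac{\xi}{\lambda}=\frac{1}{1-\bar\theta},\qquad \alpha_j=\frac{\theta(1-\theta^j)}{1-\theta},
\]
so that $h(y)=e_\theta\bigl(\tfrac{1-\xi}{\lambda}\bigr)g(y)$ for $y\le0$. The prefactor $e_\theta\bigl(\frac{1-\xi}{\lambda}\bigr)$ is strictly positive, because its argument is $\ge0$ and the product representation \eqref{e_theta_prod_thetage1} has only positive factors there. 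The argument then has three steps: the easy properties ($h(0)=0$, continuity), nonnegativity of $g$ obtained from a probabilistic sign argument, and strict positivity obtained from an integral equation.

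\emph{Continuity and $h(0)=0$.} For $y\le 0$ we have $\alpha_j\ge0$, so each term is bounded by $c^j/[j]_\theta!$, which is summable by Lemma~\ref{e_theta_entire_thetage1}. Hence the series converges uniformly on $(-\infty,0]$ and $g$ is continuous there. At $y=0$ we get $g(0)=e_\theta(-c)$. By \eqref{e_theta_prod_thetage1},
\[
e_\theta(-c)=\prod_{k\ge0}\bigl(1-c(1-\bar\theta)\bar\theta^k\bigr),
\]
and the $k=0$ factor is $1-1=0$. So $h(0)=0$, which matches $h\equiv0$ on $(0,\infty)$ and gives continuity on all of $\R$. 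Moreover, every term with $j\ge1$ tends to $0$ as $y\to-\infty$, so dominated convergence gives $g(y)\to1$ as $y\to-\infty$.

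\emph{Integral equation.} Use $\alpha_j=\theta[j]_\theta$ and $\alpha_{j+1}=\theta(1+\alpha_j)$, and differentiate termwise; this is justified by uniform convergence of the differentiated series on compacts of $(-\infty,0]$. This gives
\[
g'(y)=-c\theta e^{\theta y}g(\theta y).
\]
Integrating from $y$ to $0$ and substituting $s=\theta t$, together with $g(0)=0$, yields
\[
g(y)=c\int_{\theta y}^0 e^{s}g(s)\,\d s,\qquad y\le0.
\]
This is just the eigenvalue equation \eqref{eq_eigenfunction} for $x\le0$. Once $g\ge0$ on $(-\infty,0]$ is known, strict positivity follows. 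Suppose $g(y_0)=0$ for some $y_0<0$. Then $g\equiv0$ on $(\theta y_0,0)$, since $g$ is continuous and nonnegative. Every point $y_1\in(\theta y_0,y_0)$ is then a zero, which forces $g\equiv 0$ on $(\theta y_1,0)$. Iterating, the zero interval grows to $(\theta^n y_0,0)$ for every $n$, and since $\theta>1$ this covers $(-\infty,0)$. That contradicts $g(y)\to1$.

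\emph{Nonnegativity (main obstacle).} The series alternates, so nonnegativity is not visible directly. I would obtain it from $p_n^{\xi,y}(\theta)\ge0$. By Lemma~\ref{gf_pos_theta}, $\hat P_{\xi,y,\theta}(z)=\ell(z)v_y(z)$ for $|z|<1/\lambda$, which is the radius of convergence of $R$ when $\xi>0$. By Lemma~\ref{v_entire_thetage1}, $v_y$ is entire. By Lemma~\ref{e_theta_entire_thetage1},
\[
e_\theta(-\xi z)=\prod_{k\ge0}\bigl(1-\lambda\bar\theta^k z\bigr),
\]
whose zeros are the simple zeros $\theta^k/\lambda$, and the smallest of these is $1/\lambda$. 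Thus $f(z)\coloneqq(1/\lambda-z)\ell(z)v_y(z)$ is analytic on $|z|<\theta/\lambda$, with
\[
f(1/\lambda)=\frac{h(y)}{\lambda(\bar\theta;\bar\theta)_\infty}.
\]
If $h(y)<0$, Lemma~\ref{asymptotic_powerseries} would give $p_n^{\xi,y}(\theta)\sim f(1/\lambda)\lambda^{n+1}<0$ for large $n$. This is impossible, so $h(y)\ge0$ for all $y\le0$. The delicate points are that the generating-function identity really holds on the full disk $|z|<1/\lambda$, and that the value $h(y)=0$ is allowed here; the latter is fine, because we only need to exclude negative values. Strict positivity is then supplied by the integral-equation argument above.
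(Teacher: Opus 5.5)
Your proof is correct. For continuity, $h(0)=0$, the functional-differential equation $g'(y)=-c\theta e^{\theta y}g(\theta y)$ (the paper's \eqref{eqn:dgl} with $g(y)=s(-y)$), and the propagation-of-zeros argument for strict positivity, it essentially matches the paper, up to the substitution $x=-y$ and your use of the integrated form.

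The genuine difference is the nonnegativity step. The paper proves $s\ge 0$ purely analytically. It introduces the truncated approximants $S_n$ with weights $(\bar\theta;\bar\theta)_n/(\bar\theta;\bar\theta)_{n-j}$ and shows three things: $S_n(0)=0$ via the Cauchy $q$-binomial theorem, $S_n(\infty)=1$, and $S_n'(x)\propto e^{-\theta x}S_{n-1}(\theta x)$. Induction then gives $S_n>0$, and $s=\lim S_n\ge 0$.

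You instead read $h(y)$ through the generating function of the nonnegative sequence $p_n^{\xi,y}(\theta)$. You use $\lim_{z\to 1/\lambda}(1/\lambda-z)\hat P_{\xi,y,\theta}(z)=h(y)/(\lambda(\bar\theta;\bar\theta)_\infty)$, so a negative value of $h(y)$ would force eventually negative probabilities.

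Your route is shorter and reuses machinery the paper already has (Lemmas~\ref{gf_pos_theta}, \ref{v_entire_thetage1}, \ref{e_theta_entire_thetage1}, \ref{asymptotic_powerseries}). None of these depends on the present lemma, so there is no circularity. The paper's argument, by contrast, is self-contained. It does not depend on the probabilistic interpretation or on the generating-function identity, and it also yields monotonicity of $s$ and explicit positive approximants.

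The point you flag as delicate is in fact routine. $\ell v_y$ is analytic on $|z|<1/\lambda$, since the zero of $e_\theta(-\xi z)$ of smallest modulus is at $1/\lambda$ and $v_y$ is entire. It also agrees with $\hat P_{\xi,y,\theta}$ near $0$. Hence its Taylor series, which is $\hat P_{\xi,y,\theta}$, converges on the whole disk.

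You could even bypass Lemma~\ref{asymptotic_powerseries}. If $h(y)<0$, then $\hat P_{\xi,y,\theta}(r)=f(r)/(1/\lambda-r)\to-\infty$ as $r\uparrow 1/\lambda$. This is impossible for a power series with nonnegative coefficients.
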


\begin{proof}
The fact that $h(0)=0$ follows directly from 
\begin{align*}
    h(0)={e_\theta\left(\frac{1-\xi}{\lambda}\right)}{e_\theta\left(\frac{-\xi}{\lambda}\right)},
\end{align*}
where ${e_\theta\left(\frac{-\xi}{\lambda}\right)}={e_\theta\left(\frac{-1}{ 1-\bar \theta}\right)}=0$ by Lemma~\ref{e_theta_entire_thetage1}. In particular, $h$ is continuous. To prove the positivity on the negative real line, note that
 ${e_\theta\left(\frac{1-\xi}{\lambda}\right)}>0$
since $\frac{1-\xi}{\lambda}\ge0$, $e_\theta(0)=1$ and by Lemma~\ref{e_theta_entire_thetage1}, the zeros of $e_\theta$ are negative. Hence, proving positivity of $h$ on the negative real line is equivalent to showing that for any $x>0$, the term 
\begin{align*}
    s(x)\coloneqq \frac {h(-x)}{e_\theta\left(\frac{1-\xi}{\lambda}\right)}=\sum_{j=0}^\infty \frac{\left(\frac{-\xi}{\lambda}\right)^{j}}{[j]_\theta!}e^{\frac{\theta(1-\theta^j)}{1-\theta}(-x)}=\sum_{j=0}^\infty \frac{(-1)^j}{ (1-\bar \theta)^j [j]_\theta!}\, e^{-\alpha_j x}
\end{align*}
is positive. To do so, we proceed as follows. First, we compute a differential equation for $s$. Then, we approximate $s$ with a positive function to get non-negativity. We conclude by combining these two steps to derive strict positivity.

Using $\alpha_j = \theta + \theta \alpha_{j-1}$ with $\alpha_0\coloneqq 0$, we get
\begin{align}\label{eqn:dgl}
s'(x)
 & = \sum_{j=1}^\infty \frac{(-1)^j}{ (1-\bar \theta)^j [j]_\theta!}\, (-\alpha_j) e^{-\alpha_j x}\notag
\\
 & = \frac{1}{1-\bar \theta}\sum_{j=1}^\infty \frac{(-1)^{j-1}}{ (1-\bar \theta)^{j-1} [j-1]_\theta! \frac{1-\theta^j}{1-\theta}}\, \frac{\theta (1-\theta^j)}{1-\theta} e^{-\theta x} e^{-\alpha_{j-1} \theta x}\notag
\\
 & = \frac{\theta}{1-\bar \theta}\,e^{-\theta x}\sum_{j=0}^{\infty} \frac{(-1)^{j}}{ (1-\bar \theta)^{j} [j]_\theta! }\,  e^{-\alpha_{j} \theta x}\notag
\\
&= \frac{\theta}{1-\bar \theta}\,e^{-\theta x} s(\theta x),
\end{align}
where the term-by-term differentiation is justified since $s'$ is locally uniformly convergent. Define approximations of $s$ by
\begin{align*}
S_n(x) & \coloneqq  \sum_{j=0}^n \frac{(-1)^j}{ (1-\bar \theta)^j [j]_\theta!}\,\frac{(\bar\theta;\bar\theta)_n}{(\bar\theta;\bar\theta)_{n-j}}\, e^{-\alpha_j x}
\end{align*}
and note that, for $n\ge 1$, using again $\alpha_j = \theta + \theta \alpha_{j-1}$,
\begin{align}
S_n'(x)
 & = \sum_{j=1}^n \frac{(-1)^j}{ (1-\bar \theta)^j [j]_\theta!}\, \frac{(\bar\theta;\bar\theta)_n}{(\bar\theta;\bar\theta)_{n-j}}\,(-\alpha_j) e^{-\alpha_j x} \notag
\\
 & = \frac{1}{1-\bar \theta}\sum_{j=1}^n \frac{(-1)^{j-1}}{ (1-\bar \theta)^{j-1} [j-1]_\theta! \frac{1-\theta^j}{1-\theta}}\, \frac{(1-\bar\theta^n)(\bar\theta;\bar\theta)_{n-1}}{(\bar\theta;\bar\theta)_{n-1-(j-1)}}\,\frac{\theta (1-\theta^j)}{1-\theta} e^{-\theta x} e^{-\alpha_{j-1} \theta x} \notag
\\
 & = \frac{\theta}{1-\bar \theta}\,e^{-\theta x}\sum_{j=0}^{n-1} \frac{(-1)^{j}}{ (1-\bar \theta)^{j} [j]_\theta! }\,  \frac{(1-\bar\theta^{n})(\bar\theta;\bar\theta)_{n-1}}{(\bar\theta;\bar\theta)_{n-1-j}}\, e^{-\alpha_{j} \theta x} \notag
\\
&= \frac{\theta(1-\bar\theta^{n})}{1-\bar \theta}\,e^{-\theta x} S_{n-1}(\theta x). \label{eqn:recursiondgl}
\end{align}
Now, by the Cauchy $q$-binomial theorem, for $n\ge 1$,
\begin{align}
S_n(0) & = \sum_{j=0}^n \frac{(-1)^j}{ (1-\bar \theta)^j [j]_\theta!}\,\frac{(\bar\theta;\bar\theta)_n}{(\bar\theta;\bar\theta)_{n-j}} \notag
\\
&= \sum_{j=0}^n \frac{(-1)^j(1-\theta)^j}{ (1-\bar \theta)^j (\theta;\theta)_j}\,\frac{(\bar\theta;\bar\theta)_n}{(\bar\theta;\bar\theta)_{n-j}}\notag
\\
&= \sum_{j=0}^n \frac{(-1)^j\bar\theta^{j(j-1)/2}(1-\bar\theta)^j}{(1-\bar \theta)^j} \,\frac{ (\bar\theta;\bar\theta)_n}{(\bar\theta;\bar\theta)_j(\bar\theta;\bar\theta)_{n-j}}\notag
\\
 &= \sum_{j=0}^n (-\theta)^j \bar\theta^{j(j+1)/2} \,\frac{ (\bar\theta;\bar\theta)_n}{(\bar\theta;\bar\theta)_j(\bar\theta;\bar\theta)_{n-j}}\notag
\\
&=\prod_{k=0}^{n-1}(1-\bar\theta^k)
 =0. \label{eqn:boundaryleft}
\end{align}
Further, as seen from the definition,
\begin{align} \label{eqn:boundaryright}
\lim_{x\to\infty} S_n(x) & = 1.
\end{align}
Now let us proceed by induction. For $n=0$, $S_0(x)=1$ for all $x>0$. In particular, $S_0(x)>0$ for all $x>0$. 
Assume that $S_{n-1}(x)>0$ for all $x>0$. Using (\ref{eqn:recursiondgl}), this shows that $S_n'(x)>0$ for all $x>0$. This and the boundary conditions (\ref{eqn:boundaryleft}) and (\ref{eqn:boundaryright}) show that we must have $S_n(x)>0$ for all $x>0$. 

Letting now $n\to\infty$, and using the dominated convergence theorem (with $\frac{(\bar\theta;\bar\theta)_n}{(\bar\theta;\bar\theta)_{n-j}}\le 1$ and observing that the remaining series is absolutely convergent) yields that
\begin{align*}
S_n(x) & = \sum_{j=0}^n \frac{(-1)^j}{ (1-\bar \theta)^j [j]_\theta!}\,\frac{(\bar\theta;\bar\theta)_n}{(\bar\theta;\bar\theta)_{n-j}}\, e^{-\alpha_j x} \to \sum_{j=0}^\infty \frac{(-1)^j}{ (1-\bar \theta)^j [j]_\theta!}\, e^{-\alpha_j x} = s(x),
\end{align*}
where we used the fact that $\frac{(\bar\theta;\bar\theta)_n}{(\bar\theta;\bar\theta)_{n-j}}=\prod_{i=n-j}^{n-1}(1-\bar\theta^{i+1}) \to 1$.

Since $S_n(x)>0$ for all $x>0$ and all $n$, we must have $s(x)\geq 0$ for all $x>0$.

Finally, the function $s$ satisfies $s(0)=0$ and, since $\alpha_j>0$ for every $j\ge1$, another application of
dominated convergence yields $\lim_{x\to\infty} s(x) = 1$. Moreover, we just proved that $s(x)\geq 0$ for all $x>0$. For the first property, note that $s(0)=0$ follows from $S_n(0)=0$ for all $n$.
From the fact that $s(x)\geq 0$, we see that $s$ must be a non-decreasing function, due to (\ref{eqn:dgl}). Assume that $s(x)=0$ for some $x>0$. Then, by monotonicity, $s(x')=0$ for all $0\leq x'\leq x$. This trivially implies that $s'(x')=0$ for all $0\leq x'< x$. Using the differential equation (\ref{eqn:dgl}), this shows that $s(x')=0$ for all $0\leq x' < \theta x$, i.e.\ a larger interval. Continuing this way, we deduce that $s(x)=0$ for all $x\geq 0$. This, however, contradicts the property $\lim_{x\to\infty} s(x) = 1$.
\end{proof}
With this positivity result, we can identify the dominant pole and derive the coefficient asymptotics.
\begin{Lemma}\label{gf_pos_theta_ge1}
     For $\theta>1$, $\xi\in(0,1]$ and $y< 0$, we have
    \begin{align*}
      \hat{P}_{\xi,y,\theta}(z)= \ell(z)v(z)
    \end{align*}
    with radius of convergence $\frac{1}{\lambda}$. More concretely, the unique singularity with smallest modulus of $\hat{P}_{\xi,y,\theta}(z)$ is simple, isolated and given by $\frac{1}{\lambda}$.
\end{Lemma}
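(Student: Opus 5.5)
The plan mirrors the proof of Lemma~\ref{gf_pos_theta_le1}, with the roles of the two factors swapped. By Lemma~\ref{gf_pos_theta}, for $y\le 0$ we have $\hat{P}_{\xi,y,\theta}(z)=\ell(z)v(z)$ on the common disk of convergence. This holds at most within the radius of convergence of $R$.

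First I determine the singularities of $\ell(z)=1/e_\theta(-\xi z)$. By Lemma~\ref{e_theta_entire_thetage1}, $e_\theta$ is entire with zeros exactly at the real, negative, simple points $-1/((1-\bar\theta)\bar\theta^n)$. Hence $e_\theta(-\xi z)$ vanishes exactly at
\[
z=\frac{1}{\xi(1-\bar\theta)\bar\theta^n}=\frac{\theta^n}{\lambda},\qquad n\in\N_0.
\]
These zeros are all simple, since $\xi>0$. So $\ell$ is meromorphic on $\C$ with simple poles at $\theta^n/\lambda$. Since $\theta>1$, the pole of smallest modulus is $1/\lambda$, it is unique, and it is isolated from the others because the next pole is $\theta/\lambda>1/\lambda$.

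By Lemma~\ref{v_entire_thetage1}, $v=v_y$ is entire. Moreover, $v_y(1/\lambda)=h(y)$ by \eqref{def_h_thetage1}, and Lemma~\ref{thetage1_h_pos} gives $h(y)>0$ for $y<0$. So $v$ does not cancel the pole at $1/\lambda$. This is exactly where the hypothesis $y<0$ (rather than $y\le 0$) is needed, because $h(0)=0$.

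Next I check that the identity $\hat{P}_{\xi,y,\theta}=\ell v$ really holds on the whole disk $|z|<1/\lambda$. By Lemma~\ref{def_R}, $R(z)=\xi z\,e_\theta((1-\xi)z)/e_\theta(-\xi z)$, and its radius of convergence is the smallest zero modulus of $e_\theta(-\xi z)$, namely $1/\lambda$. The numerator is entire and does not vanish at $1/\lambda$: indeed $(1-\xi)/\lambda\ge 0$, and $e_\theta$ has only negative zeros with $e_\theta(0)=1$. Also, $e_\theta((1-\xi)z)$ is entire, and the series in $v$ is entire. Hence all series involved in the proof of Lemma~\ref{gf_pos_theta} converge absolutely for $|z|<1/\lambda$, and the identity holds there. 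The power series $\hat{P}_{\xi,y,\theta}$ thus coincides with the meromorphic function $\ell v$ on $|z|<1/\lambda$. Since $\ell v$ has a genuine simple pole at $1/\lambda$ (the factor $v(1/\lambda)\neq0$), the radius of convergence is exactly $1/\lambda$, and $1/\lambda$ is the unique singularity of smallest modulus. It is simple and isolated, because $\ell v$ is analytic on $|z|<\theta/\lambda$ apart from $1/\lambda$.

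The main potential obstacle is the non-vanishing $v_y(1/\lambda)\neq0$. It is already settled by the positivity argument in Lemma~\ref{thetage1_h_pos}, so the remaining work is only bookkeeping about the domains of convergence.
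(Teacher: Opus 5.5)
Your proposal is correct and follows the paper's proof essentially step by step. It uses the factorization from Lemma~\ref{gf_pos_theta}, locates the smallest simple pole of $\ell$ at $1/\lambda$ via Lemma~\ref{e_theta_entire_thetage1}, rules out cancellation via $v_y(1/\lambda)=h(y)>0$ from Lemma~\ref{thetage1_h_pos}, and checks that the radius of convergence of $R$ is $1/\lambda$, so the identity extends to the whole disk; you are slightly more explicit than the paper about that last step and about the pole being isolated (the next one is at $\theta/\lambda$).
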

\begin{proof}
By Lemma~\ref{gf_pos_theta}, $\hat{P}_{\xi,y,\theta}(z)= \ell(z)v(z)$ within the common radius of convergence, but at most within the radius of convergence of $R$. By Lemma~\ref{v_entire_thetage1}, $v$ is entire and, by Lemma~\ref{e_theta_entire_thetage1}, the pole of $\ell$ with smallest modulus is located at $\frac{1}{(1-\bar\theta)\xi}=\frac{1}{\lambda}$ and it is simple and isolated. Lemma~\ref{thetage1_h_pos} yields that $v\left(\frac{1}{\lambda}\right)=v_y\left(\frac{1}{\lambda}\right)=h(y)>0$, so $\frac{1}{\lambda}$ is indeed the unique singularity with smallest modulus of $\hat{P}_{\xi,y,\theta}(z)$. Here, we implicitly used the representation of $\hat{P}_{\xi,y,\theta}(z)$ from Lemma~\ref{gf_pos_theta}, which only holds within the radius of convergence of $R$. By the same argument as for $\hat{P}_{\xi,y,\theta}(z)$, the radius of convergence of $R$ is given by $\frac{1}{\lambda}$, so the claim follows.
\end{proof}
\begin{Lemma}\label{asymptotic_theta_ge1_neg_y}
  For $\theta>1$, $y\in\R$ and $\xi\in(0,1]$, we have, with $h$ defined in \eqref{def_h_thetage1},
  \begin{align*}
      \lim_{n\to\infty} \frac{p_n^{\xi,y}(\theta)\lambda(\bar\theta;\bar\theta)_\infty}{\lambda^{n+1}}=h(y).
  \end{align*}
\end{Lemma}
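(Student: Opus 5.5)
We split according to the sign of $y$, in the same way as for $0<\theta<1$.

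\emph{Case $y\ge0$.} Here $h(y)=0$ for $y>0$, and $h(0)=0$ by Lemma~\ref{thetage1_h_pos}. Lemma~\ref{asymptotic_pos_theta_ge1_pos_y} shows that $p_n^{\xi,y}(\theta)$ decays superexponentially. Since $\lambda>0$, this gives $p_n^{\xi,y}(\theta)/\lambda^{n+1}\to0=h(y)$, which settles this case.

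\emph{Case $y<0$.} Here we use Lemma~\ref{asymptotic_powerseries}. By Lemma~\ref{gf_pos_theta_ge1}, $\hat P_{\xi,y,\theta}(z)=\ell(z)v_y(z)$ for $|z|<\frac1\lambda$, and $\frac1\lambda$ is the unique, simple and isolated singularity of smallest modulus. We set
\begin{align*}
f(z)\coloneqq\Bigl(\frac1\lambda-z\Bigr)\frac{v_y(z)}{e_\theta(-\xi z)}.
\end{align*}
By Lemma~\ref{v_entire_thetage1}, $v_y$ is entire. By Lemma~\ref{e_theta_entire_thetage1}, the zeros of $e_\theta(-\xi z)$ are $\frac{1}{\xi(1-\bar\theta)\bar\theta^{n}}=\frac{\theta^n}{\lambda}$. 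So the only zero on $|z|<\frac{\theta}{\lambda}$ is the simple zero at $\frac1\lambda$, and $f$ is analytic on $|z|<z_1\coloneqq\theta/\lambda>\frac1\lambda$.

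Next we evaluate $f$ at $\frac1\lambda$. The product representation \eqref{e_theta_prod_thetage1} gives
\begin{align*}
e_\theta(-\xi z)=\prod_{k=0}^\infty\bigl(1-\lambda\bar\theta^k z\bigr)=(1-\lambda z)\prod_{k=1}^\infty(1-\lambda\bar\theta^kz).
\end{align*}
Using $\frac1\lambda-z=\frac{1-\lambda z}{\lambda}$, we get
\begin{align*}
\Bigl(\frac1\lambda-z\Bigr)\frac{1}{e_\theta(-\xi z)}=\frac1\lambda\prod_{k\ge1}\frac{1}{1-\lambda\bar\theta^kz},
\end{align*}
which equals $\frac{1}{\lambda(\bar\theta;\bar\theta)_\infty}$ at $z=\frac1\lambda$. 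Together with $v_y(\frac1\lambda)=h(y)$ from \eqref{def_h_thetage1}, this yields
\begin{align*}
f\Bigl(\frac1\lambda\Bigr)=\frac{h(y)}{\lambda(\bar\theta;\bar\theta)_\infty}.
\end{align*}
This value is positive because $h(y)>0$ by Lemma~\ref{thetage1_h_pos}.

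Lemma~\ref{asymptotic_powerseries} with $z_0=\frac1\lambda$ then gives
\begin{align*}
p_n^{\xi,y}(\theta)\sim\lambda f\Bigl(\frac1\lambda\Bigr)\lambda^{n}=\frac{h(y)}{\lambda(\bar\theta;\bar\theta)_\infty}\lambda^{n+1},
\end{align*}
which is exactly the claimed limit.

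\emph{Main obstacle.} The only delicate point is that the representation of $\hat P$ is valid on the full disk $|z|<\frac1\lambda$, since Lemma~\ref{gf_pos_theta} restricts it to the disk of convergence of $R$. This is already handled in Lemma~\ref{gf_pos_theta_ge1}, so we only need to invoke it. Everything else is a routine check of the hypotheses of Lemma~\ref{asymptotic_powerseries}.
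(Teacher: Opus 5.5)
Your proof is correct and is essentially the paper's argument. For $y\ge 0$ you use superexponential decay together with $h(y)=0$. For $y<0$ you apply Lemma~\ref{asymptotic_powerseries} to $f(z)=(\frac1\lambda-z)\ell(z)v_y(z)$, with $f(\frac1\lambda)=\frac{h(y)}{\lambda(\bar\theta;\bar\theta)_\infty}$ computed from \eqref{e_theta_prod_thetage1}. Your explicit radius $z_1=\theta/\lambda$, read off from the zeros $\theta^n/\lambda$ of $e_\theta(-\xi z)$, and your appeal to Lemma~\ref{thetage1_h_pos} for $h(0)=0$ make two points slightly more precise than the paper states them.
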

\begin{proof}
   For $y\ge0$, the claim follows by Lemma~\ref{asymptotic_pos_theta_ge1_pos_y} and $h(y)=0$. For $y<0$, we want to apply Lemma~\ref{asymptotic_powerseries}, so we check that there is a function $f$ with
    \begin{align}
        \hat{P}_{\xi,y,\theta}(z)=\frac{f(z)}{\frac{1}{\lambda}-z}~~~~\text{for all}~|z|<\frac{1}{\lambda}\label{eq_Lemma_2},
    \end{align}
   where $f$ is analytic on $|z|<z_1$ with $\frac{1}{\lambda}<z_1$ and $f\left(\frac{1}{\lambda}\right)\neq 0$. 
   By Lemma~\ref{gf_pos_theta_ge1}, $f(z)=\left(\frac{1}{\lambda}-z\right)\ell(z)v(z)$  is a suitable choice for \eqref{eq_Lemma_2}. Lemma~\ref{gf_pos_theta_ge1} yields that $\frac{1}{\lambda}$ is the unique singularity with smallest modulus of $\ell(z)v(z)$ and it is simple and isolated. In particular, there is some $z_1>\frac{1}{\lambda}$ such that the function $f$ is analytic on $|z|<z_1$ and by Lemma~\ref{thetage1_h_pos}, $h(y)=v_y\left(\frac{1}{\lambda}\right)>0$. The remaining part follows from the representation from \eqref{e_theta_prod_thetage1}
   \begin{align*}
      \left(\frac{1}{\lambda}-z\right)\ell(z)=\left(\frac{1}{\lambda}-z\right)\frac{1}{1-z\lambda}\prod_{n=1}^\infty\frac{1}{1-z\lambda \bar\theta^n}=\frac{1}{\lambda}\prod_{n=1}^\infty\frac{1}{1-z\lambda \bar\theta^n},
   \end{align*}
   which is equal to $\frac{1}{\lambda}\prod_{n=1}^\infty\frac{1}{1-\bar\theta^n}=\frac{1}{\lambda(\bar\theta;\bar\theta)_\infty}>0$ at $z=\frac{1}{\lambda}$, so $f\left(\frac{1}{\lambda}\right)=\frac{1}{\lambda(\bar\theta;\bar\theta)_\infty}h(y)>0$.
  Thus, $f$ satisfies the  conditions for Lemma \ref{asymptotic_powerseries}, which now yields $p_n^{\xi,y}(\theta)\sim \frac{1}{\lambda(\bar\theta;\bar\theta)_\infty}h(y)\lambda^{n+1}$.
\end{proof}
\begin{proof}[Proof of Theorem~\ref{maintheorem} (ii)]
 Equation (5) and Theorem 1 (a) of \cite{AR}
 imply that for $n\to\infty$, $p_n^{\xi}(\theta)\sim c_2(\theta,\xi)\lambda^{n+1}$ for some constant $c_2(\theta,\xi)>0$,
       so Lemma~\ref{asymptotic_theta_ge1_neg_y} together with Lemma~\ref{eigenfunction_lemma} and Lemma~\ref{Kernel_lim_process} yield the claim.
\end{proof}
\subsection{Invariant distributions}\label{invariant distributions}
With the eigenvalue equation \eqref{eq_eigenfunction} shown in Theorem~\ref{maintheorem} (i) and (ii), we can now deduce the existence of the invariant distribution claimed in Theorem~\ref{invariant_distribution}.
\begin{proof}[Proof of Theorem~\ref{invariant_distribution} (i)]
Let $0<\theta<1$ and $\xi\in[0,1)$. Define $\bar\theta\coloneqq \theta^{-1}$ and
\begin{align*} 
    \tilde h_{\theta,\xi}(x) \coloneqq \mathds{1}_{\{x>0\}} h_{\bar\theta,\,1-\xi}(-x), \qquad x\in\R. 
\end{align*} 
We first verify that $\tilde h_{\theta,\xi}$ satisfies the left eigenfunction equation \eqref{eq_left_eigenfunction} from Lemma~\ref{invariant_distribution_limiting_process}. Recall that reflection interchanges the two sides of the asymmetric Laplace distribution. Thus, for every non-negative measurable function $f$, 
\begin{align*}
    \int_\R f(-x)\phi_\xi( x)\d x = \int_\R f(z)\phi_{1-\xi}( z)\d z. 
\end{align*} 
Let $y>0$. Since $\theta>0$, we have $y>\theta x $ if and only if $x<\bar\theta y$. 
Consequently, using the substitution $z=-x$, we obtain
 \begin{align*} 
    \int_\R \mathds{1}_S(y,x) \tilde h_{\theta,\xi}(x) \phi_\xi( x)\d x 
     &= \int_0^{\bar\theta y} h_{\bar\theta,\,1-\xi}(-x) \phi_\xi( x)\d x 
    \\ &= \int_{\R_{<0}} \mathds{1}_{\{z>\bar\theta(-y)\}} h_{\bar\theta,\,1-\xi}(z) \phi_{1-\xi}( z)\d z 
    \\ &= \lambda_{\bar\theta,\,1-\xi} h_{\bar\theta,\,1-\xi}(-y)
    \\ &= \lambda_{\theta,\xi} \tilde h_{\theta,\xi}(y), 
\end{align*}
where the last but one step follows from the right eigenfunction equation for the parameters $\bar\theta>1$ and $1-\xi$ from Theorem~\ref{maintheorem} (ii) and the last equality holds because corresponding eigenvalues agree:
\begin{align} 
    \lambda_{\bar\theta,\,1-\xi} 
    &= \left( 1-\frac{1}{\bar\theta} \right)(1-\xi) 
    = (1-\theta)(1-\xi) 
    = \lambda_{\theta,\xi}. \label{eigenvalue_identity}
\end{align} 
If $y\leq0$, then $x>0$ from the definition of $\tilde h_{\theta,\xi}$ and $y>\theta x$ cannot hold simultaneously. Hence, both sides of the left eigenfunction equation vanish. We have thus shown that 
\begin{align*} 
    \int_\R \mathds{1}_S(y,x) \tilde h_{\theta,\xi}(x) \phi_\xi( x)\d x = \lambda_{\theta,\xi} \tilde h_{\theta,\xi}(y), \qquad y\in\R. 
\end{align*} 
Set 
\begin{align*} 
    I  \coloneqq \int_\R h_{\theta,\xi}(x) \tilde h_{\theta,\xi}(x) \phi_\xi( x)\d x. 
\end{align*}
Since the two eigenfunctions and $\phi_\xi$ are strictly positive on $(0,\infty)$, we have $I >0$. Moreover, the persistence asymptotics from Lemma~\ref{asymptotic_pos_theta_pos_y}, Lemma~\ref{asymptotic_pos_theta_neg_y} and Lemma~\ref{asymptotic_theta_ge1_neg_y} applied with \eqref{eigenvalue_identity} together with Lemma~\ref{estimate_Px} and equation (5) from \cite{AR} imply that both $h_{\theta,\xi}$ and $h_{\bar\theta,\,1-\xi}$ are bounded on their respective state spaces. Since $\phi_\xi$ is a probability density, it follows that $I <\infty$.
Lemma~\ref{invariant_distribution_limiting_process} now shows that the following probability measure is invariant for $p'$:
\begin{align*} 
    \pi (\d x) 
    = \frac{ h_{\theta,\xi}(x) \tilde h_{\theta,\xi}(x) }{ I  } \phi_\xi( x)\d x 
    = \frac{ h_{\theta,\xi}(x) h_{\bar\theta,\,1-\xi}(-x) }{ I  } \mathds{1}_{\{x>0\}} \phi_\xi( x)\d x.
\end{align*}

Now, we identify this invariant distribution. Instead of the following proof, one could also plug in $h_{\theta,\xi}$ from \eqref{def_h_pos_theta_2} and $h_{\bar\theta,\,1-\xi}$ from \eqref{def_h_thetage1} and transform the term above by a direct calculation. However, we will do a shorter proof applying a fixed-point argument to an autoregressive representation of the conditioned chain. To do so, note that  $\pi$ is concentrated on $(0,\infty)$ and $(0,\infty)$ is closed under the transition kernel $p'$. On this state space, the following holds. For $x>0$, the right eigenfunction and eigenvalue are given by 
\begin{align*} 
    h_{\theta,\xi}(x) = \exp\left( -\frac{\theta}{1-\theta}x \right) 
\end{align*} 
and $\lambda_{\theta,\xi} = (1-\xi)(1-\theta)$.
Moreover, $y>\theta x>0$ implies $ \phi_\xi(y) = (1-\xi)e^{-y}$.
Thus, 
\begin{align*} 
    p' (x,\d y) 
    &= \mathds{1}_{\{y>\theta x\}} \frac{ \exp\left( -\frac{\theta}{1-\theta}y \right) }{ (1-\xi)(1-\theta) \exp\left( -\frac{\theta}{1-\theta}x \right) } (1-\xi)e^{-y}\d y 
    \\ &= \frac{1}{1-\theta} \exp\left( -\frac{y-\theta x}{1-\theta} \right) \mathds{1}_{\{y>\theta x\}} \d y. 
\end{align*} 
Hence, on $(0,\infty)$, the conditioned chain satisfies the autoregressive equation $Y_{n+1} = \theta Y_n+(1-\theta)E_{n+1}$, 
where $(E_n)_{n\geq1}$ are i.i.d. exponentially distributed random variables with parameter $1$. To find a fixed point of this equation, first note that the random variable 
\begin{align*} 
    Y_\infty \coloneqq (1-\theta) \sum_{k=0}^\infty \theta^kE_{k+1} 
\end{align*} 
is well defined almost surely since $\E[Y_\infty]=(1-\theta)\sum_{k=0}^\infty \theta^k=1$ and the summands are non-negative. If $E_0$ is another exponentially distributed random variable independent of $Y_\infty$, then 
\begin{align*} 
    (1-\theta)E_0+\theta Y_\infty 
    = (1-\theta)E_0 + (1-\theta) \sum_{k=0}^\infty \theta^{k+1}E_{k+1} 
    \overset{\d}{=} (1-\theta) \sum_{k=0}^\infty \theta^kE_{k+1} 
    = Y_\infty. 
\end{align*} 
Thus, the law of $Y_\infty$ is invariant for the restriction of $p'$ to $(0,\infty)$. 

To conclude, we show that the law of $Y_\infty$ is the unique invariant distribution of the restricted kernel. Let $\nu$ be an invariant probability measure for the restriction of $p'$ to $(0,\infty)$, and let $Y_0$ have distribution $\nu$ and be independent of $(E_n)_{n\geq1}$. Then
\begin{align*}
    Y_n  = \theta^nY_0  +(1-\theta)\sum_{j=1}^n\theta^{n-j}E_j.
\end{align*}
Since $0<\theta<1$, we have $\theta^nY_0\to0$ almost surely. Moreover,
\begin{align*}
    (1-\theta)\sum_{j=1}^n\theta^{n-j}E_j  \overset{\d}{=}(1-\theta)\sum_{k=0}^{n-1}\theta^kE_{k+1},
\end{align*}
where the right-hand side converges to $Y_\infty$ almost surely. Hence, by Slutsky's theorem, $Y_n$ converges to $Y_\infty$ in distribution. On the other hand, the invariance of $\nu$ implies that $Y_n$ has distribution $\nu$ for every $n\in\N_0$. Consequently, $\nu $ is given by the law of $Y_\infty$ and thus, the restriction of $p'$ to $(0,\infty)$ has the unique invariant distribution claimed in \eqref{inv_distr_i}.

It remains to prove uniqueness on the full state space $\R$. Let $\mu$ be an arbitrary invariant probability measure for $p'$ on $\R$ and set $A\coloneqq(0,\infty)$. Since $p'(x,A)=1$ for every $x\in A$, the invariance of $\mu$ on $\R$ yields
\begin{align*}
    \mu(A)
    &= \int_{\R}p'(x,A)\mu(\d x) 
    = \int_Ap'(x,A)\mu(\d x)
       +\int_{A^c}p'(x,A)\mu(\d x) 
    \\&= \mu(A)+\int_{A^c}p'(x,A)\mu(\d x).
\end{align*}
For every $x\in A^c$, the transition density of $p'$ is strictly positive on $A$, and hence $p'(x,A)>0$. Consequently, $\mu(A^c)=0$. Thus, every invariant probability measure for $p'$ is concentrated on $(0,\infty)$ and must coincide with the unique invariant distribution $\pi$ of the restricted kernel. Hence, $\pi$ is also the unique invariant distribution on $\R$.
\end{proof}
\begin{proof}[Proof of Theorem~\ref{invariant_distribution} (ii)]
Let $\theta>1$ and $\xi\in(0,1]$ and recall $
\bar\theta \coloneqq \frac{1}{\theta}\in(0,1)$.
Throughout this proof, we consider the eigenvalue problem and the
conditioned kernel restricted to the state space $\R_{<0}$. Define 
\begin{align*} 
    \tilde h_{\theta,\xi}(x) \coloneqq h_{\bar\theta,1-\xi}(-x), \qquad x\in\R_{<0}. 
\end{align*} 
Let $y<0$. Using the substitution $z=-x$ and the reflection property of the asymmetric Laplace distribution, we obtain
\begin{align*} 
    \int_{\R_{<0}} \mathds{1}_S(y,x) \tilde h_{\theta,\xi}(x) \phi_\xi( x)\d x 
     &= \int_{-\infty}^{\bar\theta y} h_{\bar\theta,1-\xi}(-x) \phi_\xi( x)\d x 
    \\ &= \int_\R \mathds{1}_{\{z>\bar\theta(-y)\}} h_{\bar\theta,1-\xi}(z) \phi_{1-\xi}( z)\d z 
    \\ &= \lambda_{\bar\theta,1-\xi} h_{\bar\theta,1-\xi}(-y)
        \\ &= \lambda_{\theta,\xi} \tilde h_{\theta,\xi}(y),
\end{align*} 
where we used the right eigenvalue equation \eqref{eq_eigenfunction} from Theorem~\ref{maintheorem} (i) and
\begin{align*} 
    \lambda_{\bar\theta,1-\xi}
    = (1-(1-\xi))(1-\bar\theta) 
    = \xi\left(1-\frac{1}{\theta}\right) 
    = \lambda_{\theta,\xi}. 
\end{align*} 
It follows that 
\begin{align*} 
    \int_{\R_{<0}} \mathds{1}_S(y,x) \tilde h_{\theta,\xi}(x) \phi_\xi( x)\d x = \lambda_{\theta,\xi} \tilde h_{\theta,\xi}(y), \qquad y<0. 
\end{align*}
Set 
\begin{align*} 
    I\coloneqq \int_{-\infty}^0 h_{\theta,\xi}(x) h_{\bar\theta,1-\xi}(-x) \phi_\xi( x)\d x.
\end{align*} 
The eigenfunctions and $\phi_\xi$ are strictly positive on the relevant state spaces, so $I >0$. Their boundedness follows from Lemma~\ref{estimate_Px} and the corresponding persistence asymptotics from Lemma~\ref{asymptotic_pos_theta_pos_y} and Lemma~\ref{asymptotic_theta_ge1_neg_y} as well as Equation (5) of \cite{AR}. Hence, $I <\infty$.
Lemma~\ref{invariant_distribution_limiting_process} gives that
\begin{align*} 
    \pi_{\theta,\xi} (\d x) = \frac{ h_{\theta,\xi}(x) h_{\bar\theta,1-\xi}(-x) }{ I  } \phi_\xi( x)\d x, \qquad x<0,
\end{align*} 
is invariant for $p'$. It remains to identify this distribution. Let $\pi_{\bar\theta,1-\xi}$ denote the invariant probability measure from Theorem~\ref{invariant_distribution}~(i). The density representation
obtained in its proof has normalization constant
\begin{align*}
    \bar I
    \coloneqq\int_0^\infty h_{\bar\theta,1-\xi}(z)h_{\theta,\xi}(-z)\phi_{1-\xi}(z)\d z
    =\int_{-\infty}^0h_{\bar\theta,1-\xi}(-x)h_{\theta,\xi}(x)\phi_\xi(x)\d x
    =I,
\end{align*}
where we used $z=-x$ and
$\phi_{1-\xi}(-x)=\phi_\xi(x)$. 
Further,
\begin{align*}
    \pi_{\theta,\xi}(\d x) 
    &=\frac{1}{I}\, h_{\theta,\xi}(x)h_{\bar\theta,1-\xi}(-x)\phi_\xi(x) 
    =\frac{1}{\bar I} \, h_{\bar\theta,1-\xi}(-x)h_{\theta,\xi}(x)\phi_{1-\xi}(-x)
    =\pi_{\bar\theta,1-\xi}(-\d x).
\end{align*}
By Theorem~\ref{invariant_distribution}~(i),
\begin{align*}
    \pi_{\bar\theta,1-\xi}(\d x)
    =
    \P\left(
        (1-\bar\theta)
        \sum_{k=0}^\infty\bar\theta^kE_{k+1}
        \in\d x
    \right).
\end{align*}
Consequently,
\begin{align*}
    \pi_{\theta,\xi}(\d x)
    =
    \P\left(
        -(1-\theta^{-1})
        \sum_{k=0}^\infty\theta^{-k}E_{k+1}
        \in\d x
    \right).
\end{align*}
Finally, uniqueness follows from irreducibility. Since $\xi>0$ and $h_{\theta,\xi}$ is strictly positive on $\R_{<0}$, the reduced transition density of $p'$ is strictly positive on $(\theta x,0)$ for every $x<0$. If $p^n(x,\cdot)$ denotes the $n$-step transition density of $p'$, an induction using the Chapman-Kolmogorov equation yields
\begin{align*}
p^n(x,y)>0,\qquad x<0,\quad y\in(\theta^n x,0),\quad n\in\N.
\end{align*}
Since $\theta^n x\to-\infty$, every Borel set of positive Lebesgue measure can be reached from every $x<0$ with positive probability in finitely many steps. Thus, $p'$ is Lebesgue-irreducible on $\R_{<0}$. By the standard uniqueness result for irreducible Markov chains, $p'$ admits at most one invariant probability measure and since $\pi_{\theta,\xi}$ is invariant, it is the unique invariant distribution.
\end{proof}
\section{The case \texorpdfstring{$\theta<0$}{theta<0}}\label{proofs_theta_neg}
The special cases $\xi=0$ and $\xi=1$ are treated in Section~\ref{trivialcases}. For the remainder of this section, consider $\xi\in(0,1)$ and abbreviate 
\begin{align*}
    \gamma\coloneqq\frac{1-\xi}{\xi}> 0\quad\text{and}\quad u\coloneqq (\xi-1)\gamma^{-1/2}=-\sqrt{\xi(1-\xi)}=-\xi \gamma^{1/2}.
\end{align*}
In particular, the value $\lambda\coloneqq \frac{-u}{\arctan_{-\theta}\left(\gamma^{-1/2}\right)}>\frac{-u}{z_c}$, where $z_c\coloneqq z_c(-\theta)$ is well-defined by Corollary~\ref{arctan_cor}.

\subsection{Computation of the generating functions}
Lemma 9 in \cite{AR} provides an explicit formula for the persistence probability $p_n^{\xi,y}(\theta)$:
\begin{Lemma}\label{pers_prob_neg_theta}
    Let $\theta<0$ and $\xi\in(0,1)$. For all $n\geq1,$, it holds
        \begin{align*}
        p_n^{\xi,y}(\theta)= \begin{cases}
        \sum_{j=1}^n\frac{(\xi-1)^{j-1}r^n_{n-j+1}}{[j]_{-\theta}!}e^{-\alpha_jy}, & y\leq 0,
       \\
       \sum_{j=1}^n\frac{(-\xi)^{j-1}s^n_{n-j+1}}{[j]_{-\theta}!}(1-e^{\alpha_jy})+q_n, & y\geq 0,
       \end{cases}
    \end{align*}
where $\alpha_1\coloneqq\theta$, $r_1^1\coloneqq 1-\xi$, $s_1^1\coloneqq \xi$, $q_1\coloneqq 1-\xi$ and the further constants for $n\ge 2$ are defined via the recursions
\begin{align*}
    \alpha_n&\coloneqq (1-\alpha_{n-1})\theta=-\sum_{i=1}^n(-\theta)^i,
    \\s_n^n&\coloneqq 0,
  \\ r_{i}^n&\coloneqq\left( \frac{\xi}{1-\xi}\right)^{n-i-1} s_{i}^{n-1}, \quad s_i^n\coloneqq(-1)\left( \frac{1-\xi}{\xi}\right)^{n-i-1} r_i^{n-1},\quad i=1,\ldots,n-1,
 \\ r_n^n&\coloneqq\sum_{j=1}^{n-1} \frac{(-1)^{j-1} \xi^{j-1} (1-\xi) s_{n-j}^{n-1}}{\prod_{m=1}^{j-1} (1-\alpha_m)}+ q_{n-1}(1-\xi),~~~\text{and}
 \\q_n&\coloneqq r_n^n- \sum_{j=1}^{n-1} \frac{(-1)^{j-1}\xi^{j-1} (1-\xi) s_{n-j}^{n-1}}{\prod_{m=1}^{j} (1-\alpha_m)}.
\end{align*}
\end{Lemma}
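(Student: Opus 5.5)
The statement is Lemma~9 of \cite{AR}, restated here, so one option is simply to cite it. To prove it directly, I would argue by induction on $n$, using the Markov property in the form
\begin{align*}
p_n^{\xi,y}(\theta)=\int_{\{x>\theta y\}} p_{n-1}^{\xi,x}(\theta)\,\phi(x)\,\d x ,
\end{align*}
with $p_0^{\xi,x}\equiv 1$. Since $\theta<0$, the integration domain changes type with the sign of $y$. For $y\le 0$ we have $\theta y\ge 0$, so only the positive half-line $x>\theta y$ contributes. For $y\ge 0$ the domain splits into $(\theta y,0)$ and $(0,\infty)$. This switching is exactly why the formula alternates between an $r$-form on $y\le0$ and an $s$-form on $y\ge0$.

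For the base case $n=1$, $p_1^{\xi,y}=\P(X>\theta y)$. For $y\le 0$ this equals $(1-\xi)e^{-\theta y}$, which matches $r_1^1=1-\xi$ and $\alpha_1=\theta$. For $y\ge0$ it equals $1-\xi+\xi(1-e^{\theta y})$, which matches $s_1^1=\xi$ and $q_1=1-\xi$.

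For the induction step, take first $y\le 0$. I insert the $y\ge 0$ form of $p_{n-1}^{\xi,x}$, integrate each term against $(1-\xi)e^{-x}$ over $(\theta y,\infty)$, and use
\begin{align*}
\int_{\theta y}^\infty e^{-x}e^{\alpha_j x}\,\d x=\frac{e^{-(1-\alpha_j)\theta y}}{1-\alpha_j}=\frac{e^{-\alpha_{j+1}y}}{1-\alpha_j},
\end{align*}
which holds because $\alpha_{j+1}=(1-\alpha_j)\theta$. The identity $[j+1]_{-\theta}!=[j]_{-\theta}!\,(1-\alpha_j)$ then turns the coefficients into the required shape with the index shifted by one. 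The constant terms, namely $q_{n-1}$ and the ``$1$'' parts, produce the new lowest exponential $e^{-\alpha_1 y}$, and collecting them gives the recursion for $r_n^n$. The case $y\ge0$ is analogous. I insert the $y\le0$ form of $p_{n-1}^{\xi,x}$ on $(\theta y,0)$ with weight $\xi e^{x}$, which produces terms $1-e^{\alpha_{j+1}y}$. The integral over $(0,\infty)$ is a constant, which together with the boundary contributions defines $q_n$.

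The main obstacle is bookkeeping rather than any conceptual difficulty. One has to match the reindexed coefficients against the stated recursions, including the factors $(\xi/(1-\xi))^{n-i-1}$ and the sign $(-1)$ in $s_i^n$, and check that $s_n^n=0$ is consistent. I would organise this by writing both sides as linear combinations of the functions $e^{-\alpha_j y}$ (respectively $1-e^{\alpha_j y}$ and $1$) and comparing coefficients. In practice I would defer to \cite{AR}, where this computation is carried out.
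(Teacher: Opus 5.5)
Your proposal is correct and matches the paper, which also does not reprove this formula but imports it as Lemma~9 of \cite{AR}. Your inductive sketch checks out as well. The one-step decomposition uses $\int_{\theta y}^\infty e^{-(1-\alpha_j)x}\,\d x=e^{-\alpha_{j+1}y}/(1-\alpha_j)$, which converges since $1-\alpha_j=[j+1]_{-\theta}>0$. Coefficient matching then reproduces exactly the stated recursions for $r_i^n$, $s_i^n$, $r_n^n$, $q_n$ and $s_n^n=0$. One small correction: the integral over $(\theta y,0)$ leaves no constant term, so $q_n$ comes solely from the integral over $(0,\infty)$.
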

\begin{Remark}
    Note that $[j]_{-\theta}!=\prod_{m=1}^{j-1} (1-\alpha_m)$ and for $\theta\neq-1$, $\alpha_n=\frac{\theta(1-(-\theta)^n)}{1+\theta}$. For $\theta=-1$, we have $\alpha_n=-n$.
\end{Remark}
In the following, we give formulas for the following power series: Define
\begin{align*}
    S^j(z)&\coloneqq\sum_{n=1}^\infty s_n^{n+j-1}z^n,\quad R^j(z)\coloneqq\sum_{n=1}^\infty r_n^{n+j-1}z^n,~~~j\in\N,~\text{and}\qquad
    Q(z)\coloneqq\sum_{n=1}^\infty q_nz^n.
\end{align*}
 From the proof of Theorem 2(h) in \cite{AR} and the symmetry of $\cos_\theta$ and the anti-symmetry of $\sin_\theta$, we deduce the following identities.
\begin{Lemma}\label{gf_other_neg_theta}
   Let $\theta<0$ and $\xi\in(0,1)$. We have
    \begin{align*}
        R^1(z)&=\frac{(1-\xi)z(\cos_{-\theta}(\xi\gamma^{1/2}z)+\gamma^{-1/2}\sin_{-\theta}(\xi\gamma^{1/2}z))}{\cos_{-\theta}(\xi\gamma^{1/2}z)-\gamma^{1/2}\sin_{-\theta}(\xi\gamma^{1/2}z)}
        \\&=\frac{(1-\xi)z(\cos_{-\theta}(uz)-\gamma^{-1/2}\sin_{-\theta}(uz))}{\cos_{-\theta}(uz)+\gamma^{1/2}\sin_{-\theta}(uz)}, 
        \\R^j(z)&=\gamma^{-(j-2)}S^{j-1}(z),~~~j\ge2,
        \\Q(z)&=\xi^{-1}\gamma^{-1/2}z^{-1}\sin_{-\theta}(\xi\gamma^{1/2}z) R^1(z)+\cos_{-\theta}(\xi\gamma^{1/2}z)-1
        \\&=-\xi^{-1}\gamma^{-1/2}z^{-1}\sin_{-\theta}(uz) R^1(z)+\cos_{-\theta}(uz)-1\qquad \text{and}
\\
    S^j(z)& =\begin{cases}
        (-1)^{j/2}\gamma^{(j-2)/2}R^1(z),~~~&j\ge 2\text{ even},
        \\(-1)^{(j-1)/2}\gamma^{(j-1)/2}\xi z,~~~&j\ge 1\text{ odd}.
    \end{cases}
\end{align*}
\end{Lemma}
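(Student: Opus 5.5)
The statement collects closed forms for the generating functions $R^1$, $R^j$, $S^j$ and $Q$ built from the recursions of Lemma~\ref{pers_prob_neg_theta}. I will split it into two parts. The genuinely analytic input, namely the first expression for $R^1$ and the first expression for $Q$ in terms of $\xi\gamma^{1/2}$, is taken from the proof of Theorem~2(h) in \cite{AR}. The remaining identities are then algebraic consequences of the recursions together with the parity of the $q$-trigonometric functions.

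\emph{Structural identities.} First I will derive the relations among the $S^j$ and $R^j$ directly from the recursions for $r_i^n$ and $s_i^n$ with $i<n$.
\begin{itemize}
\item Substituting $n\mapsto n+j-1$ in $r_i^n=\gamma^{-(n-i-1)}s_i^{n-1}$ gives $r_n^{n+j-1}=\gamma^{-(j-2)}s_n^{n+j-2}$ for $j\ge2$. Summing over $n$ yields $R^j=\gamma^{-(j-2)}S^{j-1}$.
\item In the same way, $s_n^{n+j-1}=-\gamma^{j-2}r_n^{n+j-2}$ gives $S^j=-\gamma^{j-2}R^{j-1}$ for $j\ge2$.
\item For $S^1$, we have $s_n^n=0$ for $n\ge2$ and $s_1^1=\xi$, so $S^1(z)=\xi z$.
\end{itemize}
Combining the two recursions gives $S^j=-\gamma^{j-2}\gamma^{-(j-3)}S^{j-2}=-\gamma\,S^{j-2}$ for $j\ge3$, and $S^2=-\gamma^0R^1=-R^1$. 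Induction on $j$, separately for even and odd $j$, then gives the stated formula for $S^j$: for even $j$ one gets $(-1)^{j/2}\gamma^{(j-2)/2}R^1$, and for odd $j$ one gets $(-1)^{(j-1)/2}\gamma^{(j-1)/2}\xi z$. The exponents check at $j=1,2$. Throughout I treat these as identities of formal power series, or of convergent series on a small disk.

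\emph{Rewriting in terms of $u$.} Next I will pass to the second forms using $u=-\xi\gamma^{1/2}$, so that $\xi\gamma^{1/2}z=-uz$. Since $\cos_{-\theta}$ is even and $\sin_{-\theta}$ is odd (their series contain only even, respectively odd, powers), we have $\cos_{-\theta}(-uz)=\cos_{-\theta}(uz)$ and $\sin_{-\theta}(-uz)=-\sin_{-\theta}(uz)$. Substituting these into the two given expressions flips the signs in front of $\gamma^{\pm1/2}\sin_{-\theta}$ in $R^1$ and in front of the sine term in $Q$, which is exactly what is claimed.

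\emph{Main obstacle.} The only real difficulty is the $R^1$ and $Q$ formulas themselves. If one does not simply quote \cite{AR}, one must solve the coupled recursion for $r_n^n$ and $q_n$. Multiplying by $z^n$ and using $[j]_{-\theta}!=\prod_{m=1}^{j-1}(1-\alpha_m)$, the sums over $j$ become convolutions of $S^{n-j}$-type coefficients with $e_{-\theta}$-type series in $\xi z$. Substituting the already-known $S^j$ splits these convolutions into even and odd parts, which produce $\cos_{-\theta}$ and $\sin_{-\theta}$ evaluated at $\xi\gamma^{1/2}z$. The result is a linear equation in $R^1$ and $Q$. Solving it gives the quotient, with denominator $\cos_{-\theta}-\gamma^{1/2}\sin_{-\theta}$. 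I would defer this bookkeeping to \cite{AR}, checking only the first coefficients for consistency, for example $r_1^1=1-\xi$, which matches the constant term $(1-\xi)z$ of $R^1$.
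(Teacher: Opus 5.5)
Your proposal is correct and follows essentially the paper's route. The paper also takes the analytic identities from the proof of Theorem~2(h) in \cite{AR} and obtains the forms in $u=-\xi\gamma^{1/2}$ from the evenness of $\cos_{-\theta}$ and the oddness of $\sin_{-\theta}$. The only difference is that you derive $R^j=\gamma^{-(j-2)}S^{j-1}$, $S^j=-\gamma^{j-2}R^{j-1}$ and $S^1=\xi z$ directly from the recursions of Lemma~\ref{pers_prob_neg_theta} instead of citing them. Those computations check out, as does the resulting parity formula for $S^j$.
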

\begin{Remark}\label{R1_analytic_0}
    By definition, the function $z\mapsto \frac{R^1(z)}{(1-\xi) z}$ has an analytic extension in $z=0$ with value $1$.
\end{Remark}
\begin{Corollary}\label{Rj_neg_theta}
  For $\theta<0$ and $\xi\in(0,1)$, it holds
    \begin{align*}
        R^j(z)=\begin{cases}
        (-1)^{(j-2)/2}\gamma^{(j-2)/2-(j-2)}\xi z,~~~&j\ge 2\text{ even},
        \\(-1)^{(j-1)/2}\gamma^{(j-3)/2-(j-2)}R^1(z),~~~&j\ge 1\text{ odd}.
        \end{cases}
    \end{align*}
\end{Corollary}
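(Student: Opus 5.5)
The corollary follows by substituting the closed forms for $S^{j}$ from Lemma~\ref{gf_other_neg_theta} into the relation $R^j(z)=\gamma^{-(j-2)}S^{j-1}(z)$, valid for $j\ge2$ by the same lemma. The only work is tracking parities and exponents of $\gamma$, plus treating $j=1$ separately.

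First, for $j=1$ the claimed formula reads $(-1)^{0}\gamma^{(1-3)/2-(1-2)}R^1(z)=\gamma^{0}R^1(z)=R^1(z)$. This is a tautology.

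Next, let $j\ge2$ be even. Then $j-1\ge1$ is odd, and the odd case of Lemma~\ref{gf_other_neg_theta} gives $S^{j-1}(z)=(-1)^{(j-2)/2}\gamma^{(j-2)/2}\xi z$. Multiplying by $\gamma^{-(j-2)}$ yields
\begin{align*}
R^j(z)=(-1)^{(j-2)/2}\gamma^{(j-2)/2-(j-2)}\xi z,
\end{align*}
which is the asserted expression.

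Finally, let $j\ge3$ be odd. Then $j-1\ge2$ is even, and the even case of Lemma~\ref{gf_other_neg_theta} gives $S^{j-1}(z)=(-1)^{(j-1)/2}\gamma^{(j-3)/2}R^1(z)$. Multiplying by $\gamma^{-(j-2)}$ gives
\begin{align*}
R^j(z)=(-1)^{(j-1)/2}\gamma^{(j-3)/2-(j-2)}R^1(z),
\end{align*}
which is again the asserted expression.

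All identities hold as formal power series, and hence on the common disk of convergence. There is no real obstacle here; the only place to be careful is matching the parity convention of Lemma~\ref{gf_other_neg_theta} correctly after the index shift $j\mapsto j-1$.
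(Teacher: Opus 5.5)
Your proposal is correct and is exactly the argument the paper has in mind: the paper just says the corollary follows immediately from Lemma~\ref{gf_other_neg_theta}. Your substitution of the odd and even cases of $S^{j-1}$ into $R^j=\gamma^{-(j-2)}S^{j-1}$, with the tautological check at $j=1$, spells this out with the exponents and signs tracked correctly.
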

\begin{proof}
    The proof follows immediately from Lemma~\ref{gf_other_neg_theta}.
\end{proof}
We now combine the coefficient formulas with the preceding generating-function identities to obtain $\hat{P}_{\xi,y,\theta}$.
\begin{Lemma}\label{gf_neg_theta}
Let $\theta<0$, $\theta\neq-1$, $\xi\in(0,1)$, and $y\in\R$.
Choose $\epsilon=\epsilon(\xi,y,\theta)>0$ such that all power series
occurring below converge absolutely for $|z|<\epsilon$. Then, for
$|z|<\epsilon$,
    \begin{align*}
      \hat{P}_{\xi,y,\theta}(z)=  
      \begin{cases}
          \gamma^{-1/2}\sin_{-\theta}(uz,-y)+\frac{R^1(z)}{(1-\xi) z}\cos_{-\theta}(uz,-y),~~~&y>0,
          \\\cos_{-\theta}(uz,y)
     -\gamma^{1/2}\frac{R^1(z)}{(1-\xi) z}\sin_{-\theta}(uz,y),&y\le 0,
      \end{cases}
    \end{align*}
    where $\frac{R^1(z)}{(1-\xi) z}=\frac{\cos_{-\theta}(uz)-\gamma^{-1/2}\sin_{-\theta}(uz)}{\cos_{-\theta}(uz)+\gamma^{1/2}\sin_{-\theta}(uz)}$.
\end{Lemma}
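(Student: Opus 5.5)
The plan mirrors the proof of Lemma~\ref{gf_pos_theta}: substitute the coefficient formulas of Lemma~\ref{pers_prob_neg_theta} into $\hat{P}_{\xi,y,\theta}(z)=1+\sum_{n\ge1}p_n^{\xi,y}(\theta)z^n$, interchange the summations over $n$ and $j$ (justified by absolute convergence for $|z|<\epsilon$), and then express everything through $R^1$ using Corollary~\ref{Rj_neg_theta} and Lemma~\ref{gf_other_neg_theta}. Throughout, $\alpha_j=\frac{\theta(1-(-\theta)^j)}{1+\theta}$ and $[j]_{-\theta}!$ are exactly the exponents and factorials appearing in $\sin_{-\theta}(\cdot,\pm y)$ and $\cos_{-\theta}(\cdot,\pm y)$ with parameter $-\theta$, since $\frac{(-\theta)(1-(-\theta)^j)}{1-(-\theta)}=-\alpha_j$.

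For $y\le 0$, I would write
\begin{align*}
\sum_{n\ge1}\sum_{j=1}^n\frac{(\xi-1)^{j-1}r^n_{n-j+1}}{[j]_{-\theta}!}e^{-\alpha_jy}z^n=\sum_{j\ge1}\frac{(\xi-1)^{j-1}z^{j-1}}{[j]_{-\theta}!}e^{-\alpha_jy}R^j(z),
\end{align*}
using the reindexing $m=n-j+1$, so that $r^{m+j-1}_m z^m$ sums to $R^j(z)$. Next I would split according to the parity of $j$. For even $j$, Corollary~\ref{Rj_neg_theta} gives $R^j(z)=(-1)^{(j-2)/2}\gamma^{-(j-2)/2}\xi z$. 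For odd $j$, it gives $R^j(z)=(-1)^{(j-1)/2}\gamma^{-(j-1)/2}R^1(z)$. Collecting powers, $(\xi-1)^{j-1}\gamma^{-(j-1)/2}$ should combine into $u^{j-1}$ up to signs, because $u=(\xi-1)\gamma^{-1/2}$. The odd terms then assemble to $\frac{R^1(z)}{z}\cdot\frac{1}{u}\sin_{-\theta}(uz,\cdot)$ with the factor $(-1)^{(j-1)/2}$, and the even terms assemble to a multiple of $\cos_{-\theta}(uz,\cdot)-1$; the constant $1$ of $\hat P$ fills in the $j=0$ term of the cosine. Since the exponent is $-\alpha_j y$ while $\cos_{-\theta}(z,y)$ carries $e^{-\alpha_j y}$, the argument here is $y$. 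Matching the constants $\xi\gamma^{1/2}=-u$ and $(1-\xi)/u=-\gamma^{1/2}$ should produce exactly $\cos_{-\theta}(uz,y)-\gamma^{1/2}\frac{R^1(z)}{(1-\xi)z}\sin_{-\theta}(uz,y)$.

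For $y>0$, I would proceed in the same way with the $s$-coefficients, splitting $1-e^{\alpha_j y}$ into two sums. The sum without exponentials, together with $Q(z)+1$, should cancel against the $y$-independent pieces by Lemma~\ref{gf_other_neg_theta}; this cancellation is the same structure as in Lemma~\ref{gf_pos_theta}. The exponential part, with $S^j$ again replaced by $R^1$ or $\xi z$ according to parity, yields $\gamma^{-1/2}\sin_{-\theta}(uz,-y)+\frac{R^1(z)}{(1-\xi)z}\cos_{-\theta}(uz,-y)$, because $e^{\alpha_jy}=e^{-\alpha_j(-y)}$. The roles of $\sin$ and $\cos$ swap here since the $S^j$ parities are opposite to the $R^j$ parities. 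The closing formula for $R^1(z)/((1-\xi)z)$ is just the second expression for $R^1$ in Lemma~\ref{gf_other_neg_theta}.

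I expect the main obstacle to be the sign and power bookkeeping, specifically tracking the factors $(-1)^{\lfloor j/2\rfloor}$, the powers of $\gamma$, and $(\xi-1)^{j-1}$ versus $(-\xi)^{j-1}$. The cancellation of the constant part in the case $y>0$ via the formula for $Q$ is the other delicate point. I would check the coefficients of $z^1$ and $z^2$ against $p_1^{\xi,y}$ and $p_2^{\xi,y}$ directly as a sanity test. Absolute convergence for small $|z|$ follows from the bounds $|\sin_{-\theta}(w,y)|,|\cos_{-\theta}(w,y)|\le\sum_k|w|^k e^{|\alpha_k y|}/[k]_{-\theta}!$, which converge near $0$, together with the analyticity of $R^1$ at $0$ (Remark~\ref{R1_analytic_0}).
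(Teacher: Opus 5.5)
Your proposal follows the paper's proof essentially step by step. You interchange the $n$- and $j$-sums, replace $R^j$ (resp.\ $S^j$) by $\xi z$ or $R^1$ according to parity via Corollary~\ref{Rj_neg_theta} and Lemma~\ref{gf_other_neg_theta}, and reassemble using $u=(\xi-1)\gamma^{-1/2}=-\xi\gamma^{1/2}$. For $y>0$ you cancel the $y$-independent part using the formula for $Q$ and the closed quotient form of $R^1$. Your constants and parities check out.

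One small correction concerns your closing convergence remark. For $\theta<-1$ and $y\neq 0$, the majorant $\sum_k |w|^k e^{|\alpha_k y|}/[k]_{-\theta}!$ diverges for every $w\neq0$. The reason is that $|\alpha_k|$ grows like $|\theta|^k$, while $\log [k]_{-\theta}!$ grows only like $k^2$. Use instead that $\alpha_k<0$ for all $k\ge1$. This makes the exponents that actually occur, $-\alpha_k y$ for $y\le 0$ and $\alpha_k y$ for $y>0$, non-positive, which gives the bound $e_{-\theta}(|w|)$ as in Corollary~\ref{sine_and_cose_prelim}.
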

\begin{proof}
  First, note that such an $\epsilon>0$ exists because $p_n^{\xi,y}(\theta)\leq1$ implies absolute convergence of $\hat P_{\xi,y,\theta}$ for $|z|<1$, while all $q$-trigonometric functions on the right-hand sides are holomorphic in a neighborhood of $0$ and 
  the singularity of the quotient in Remark~\ref{R1_analytic_0} at $z=0$ can be removed and the function can be extended analytically.
  
  Let $y\le0$. For $|z|<\epsilon$, by Lemma~\ref{pers_prob_neg_theta} and Corollary~\ref{Rj_neg_theta}, we have
  \begin{align*}
     \hat{P}_{\xi,y,\theta}(z)-1
     &=\sum_{n=1}^{\infty}p_n^{\xi,y}(\theta)z^n
     =\sum_{n=1}^{\infty}\sum_{j=1}^n\frac{(\xi-1)^{j-1}r^n_{n-j+1}}{[j]_{-\theta}!}e^{-\alpha_jy}z^n
     \\&=\sum_{j=1}^\infty\frac{(\xi-1)^{j-1}R^j(z)}{[j]_{-\theta}!}e^{-\alpha_jy}z^{j-1}
      \\&=\sum_{j=1}^\infty\frac{(z(\xi-1))^{2j-1}(-1)^{j-1}\gamma^{1-j}\xi z}{[2j]_{-\theta}!}e^{-\alpha_{2j}y}
     \\&~~~~
     +\sum_{j=0}^\infty\frac{(z(\xi-1))^{2j}(-1)^{j}\gamma^{-j}R^1(z)}{[2j+1]_{-\theta}!}e^{-\alpha_{2j+1}y}
     \\&=\frac{-\gamma \xi }{ \xi-1}\sum_{j=1}^\infty\frac{(-1)^{j}(z(\xi-1)\gamma^{-1/2})^{2j}}{[2j]_{-\theta}!}e^{-\frac{\theta(1-\theta^{2j})}{1+\theta}y}
     \\&~~~~+R^1(z)\sum_{j=0}^\infty\frac{(-1)^{j}(z(\xi-1)\gamma^{-1/2})^{2j}}{[2j+1]_{-\theta}!}e^{-\frac{\theta(1+\theta^{2j+1})}{1+\theta}y}
      \\&=\cos_{-\theta}(uz,y)-1
     +\gamma^{1/2}\frac{R^1(z)}{(\xi-1)z}\sin_{-\theta}(uz,y),
  \end{align*}
  where we used $\frac{-\gamma \xi z}{ z(\xi-1)}=1$ in the last step.  For $y>0$ and  $|z|<\epsilon$, we have, using Remark~\ref{R1_analytic_0},
   \begin{align*}
     \hat{P}_{\xi,y,\theta}(z)-1
     &=\sum_{n=1}^{\infty}p_n^{\xi,y}(\theta)z^n
     =\sum_{n=1}^{\infty}\sum_{j=1}^n\frac{(-\xi)^{j-1}s^n_{n-j+1}}{[j]_{-\theta}!}(1-e^{\alpha_jy})z^n+\sum_{n=1}^{\infty}q_nz^n
     \\&=\sum_{j=1}^\infty\frac{(-\xi z)^{j-1}S^j(z)}{[j]_{-\theta}!}(1-e^{\alpha_jy})+Q(z)
     \\&=\sum_{j=1}^\infty\frac{(-\xi z)^{2j-1}(-1)^{j}\gamma^{j-1}R^1(z)}{[2j]_{-\theta}!}(1-e^{\alpha_{2j}y})
     \\&~~~~+\sum_{j=0}^\infty\frac{(-\xi z)^{2j}(-1)^{j}\gamma^{j}\xi z}{[2j+1]_{-\theta}!}(1-e^{\alpha_{2j+1}y})+Q(z) 
     \\&=\frac{R^1(z)}{(\xi-1) z}\sum_{j=1}^\infty\frac{(-1)^{j}(-\xi z\gamma^{1/2})^{2j}}{[2j]_{-\theta}!}(1-e^{\frac{\theta(1-\theta^{2j})}{1+\theta}y})
     \\&~~~~+\xi z\sum_{j=0}^\infty\frac{(-1)^{j}(-\xi z\gamma^{1/2})^{2j}}{[2j+1]_{-\theta}!}(1-e^{\frac{\theta(1+\theta^{2j+1})}{1+\theta}y})+Q(z)
     \\&=\frac{R^1(z)}{(\xi-1) z}(\cos_{-\theta}\left(uz\right)-1)-\frac{R^1(z)}{(\xi-1) z}\sum_{j=1}^\infty\frac{(-1)^{j}(uz)^{2j}}{[2j]_{-\theta}!}e^{\frac{\theta(1-\theta^{2j})}{1+\theta}y}
     \\&~~~~-\gamma^{-1/2}\sin_{-\theta}(uz)-\xi z\sum_{j=0}^\infty\frac{(-1)^{j}(uz)^{2j}}{[2j+1]_{-\theta}!}e^{\frac{\theta(1+\theta^{2j+1})}{1+\theta}y}+Q(z)
      \\&=\frac{R^1(z)}{(\xi-1) z}\cos_{-\theta}\left(uz\right)-\frac{R^1(z)}{(\xi-1) z}\sum_{j=0}^\infty\frac{(-1)^{j}(uz)^{2j}}{[2j]_{-\theta}!}e^{\frac{1-\theta^{2j}}{1+\theta}\theta y}
     \\&~~~~-\gamma^{-1/2}\sin_{-\theta}(uz)-\xi z\sum_{j=0}^\infty\frac{(-1)^{j}(uz)^{2j}}{[2j+1]_{-\theta}!}e^{\frac{1+\theta^{2j+1}}{1+\theta}\theta y}+Q(z).
  \end{align*}
Using Lemma~\ref{gf_other_neg_theta}, this gives
 \begin{align*}
     \hat{P}_{\xi,y,\theta}(z)-1
     &=\cos_{-\theta}\left(uz\right)\left(\frac{R^1(z)}{(\xi-1) z}+1\right)
     -\gamma^{-1/2}\sin_{-\theta}\left(uz\right)\left(\frac{R^1(z)}{\xi z}+1\right)-1
     \\&~~~~~-\frac{R^1(z)}{(\xi-1) z}\cos_{-\theta}(uz,-y)
     +\gamma^{-1/2}\sin_{-\theta}(uz,-y).
    \end{align*}
With 
\begin{align*}
    \frac{R^1(z)}{(\xi-1) z}+1
    &=\frac{(\gamma^{1/2}+\gamma^{-1/2})\sin_{-\theta}(uz)}{\cos_{-\theta}(uz)+\gamma^{1/2}\sin_{-\theta}(uz)}, \quad \frac{R^1(z)}{\xi z}+1
    =\frac{(1+\gamma)\cos_{-\theta}(uz)}{\cos_{-\theta}(uz)+\gamma^{1/2}\sin_{-\theta}(uz)},
\end{align*}
we get 
\begin{align*}
    \hat{P}_{\xi,y,\theta}(z)
     &=\cos_{-\theta}\left(uz\right)\frac{(\gamma^{1/2}+\gamma^{-1/2})\sin_{-\theta}(uz)}{\cos_{-\theta}(uz)+\gamma^{1/2}\sin_{-\theta}(uz)}-\frac{R^1(z)}{(\xi-1) z}\cos_{-\theta}(uz,-y)
     \\&~~~~~-\gamma^{-1/2}\sin_{-\theta}\left(uz\right)\frac{(1+\gamma)\cos_{-\theta}(uz)}{\cos_{-\theta}(uz)+\gamma^{1/2}\sin_{-\theta}(uz)}
     +\gamma^{-1/2}\sin_{-\theta}(uz,-y)
     \\&=\frac{(\gamma^{1/2}+\gamma^{-1/2})\sin_{-\theta}(uz)\cos_{-\theta}(uz)-\gamma^{-1/2}(1+\gamma)\sin_{-\theta}(uz)\cos_{-\theta}(uz)}{\cos_{-\theta}(uz)+\gamma^{1/2}\sin_{-\theta}(uz)}
     \\&~~~~~+\gamma^{-1/2}\sin_{-\theta}(uz,-y)-\frac{R^1(z)}{(\xi-1) z}\cos_{-\theta}(uz,-y)
     \\&=\gamma^{-1/2}\sin_{-\theta}(uz,-y)-\frac{R^1(z)}{(\xi-1) z}\cos_{-\theta}(uz,-y).\tag*{\qedhere}
\end{align*}
\end{proof}
\begin{Remark}
    The identities in Lemma~\ref{gf_neg_theta} are initially obtained
for $|z|<\epsilon$. Since the expressions on their right-hand
sides are meromorphic in $z$, they provide meromorphic
continuations of $\hat P_{\xi,y,\theta}$. Hence, the
singularities of these continuations can be studied independently
of $\epsilon$. The radius of convergence $\delta$ of
$\hat P_{\xi,y,\theta}$ will be determined below.
\end{Remark}
The following is the version of Lemma~\ref{gf_neg_theta} for $\theta=-1$. It follows directly from the calculation in the proof of Lemma~\ref{gf_neg_theta} together with $\alpha_n=-n$.
\begin{Corollary}\label{gf_theta-1}
    For $\theta=-1$ and $\xi\in(0,1)$, it holds
    \begin{align*}
      \hat{P}_{\xi,y,\theta}(z)=  
      \begin{cases}
          \gamma^{-1/2}\sin(uze^{-y})+\frac{R^1(z)}{(1-\xi) z}\cos(uze^{-y}),~~~&y>0,
          \\\cos(uze^y)
     -\gamma^{1/2}\frac{R^1(z)}{(1-\xi) z}\sin(uze^y),&y\le 0,
      \end{cases}
    \end{align*}
where $\frac{R^1(z)}{(1-\xi) z}=\frac{\cos(uz)-\gamma^{-1/2}\sin(uz)}{\cos(uz)+\gamma^{1/2}\sin(uz)}$.
\end{Corollary}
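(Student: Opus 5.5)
The statement is Corollary~\ref{gf_theta-1}, the $\theta=-1$ version of Lemma~\ref{gf_neg_theta}. I will rerun the proof of Lemma~\ref{gf_neg_theta} line by line, with $\alpha_n=-n$ in place of $\alpha_n=\frac{\theta(1-(-\theta)^n)}{1+\theta}$.

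First, I check which inputs survive at $\theta=-1$. Lemma~\ref{pers_prob_neg_theta} is stated for all $\theta<0$, so it applies. The remark after it gives $[j]_{-\theta}!=[j]_1!=j!$ and $\alpha_j=-j$. The generating-function identities of Lemma~\ref{gf_other_neg_theta} and Corollary~\ref{Rj_neg_theta} are also stated for all $\theta<0$. With $-\theta=1$ we have $\cos_1=\cos$ and $\sin_1=\sin$, since $[n]_1!=n!$. Hence
\begin{align*}
\frac{R^1(z)}{(1-\xi)z}=\frac{\cos(uz)-\gamma^{-1/2}\sin(uz)}{\cos(uz)+\gamma^{1/2}\sin(uz)},
\end{align*}
and this is analytic near $0$ by Remark~\ref{R1_analytic_0}. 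Choose $\epsilon>0$ as in Lemma~\ref{gf_neg_theta}.

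For $y\le 0$, I expand $\hat P_{\xi,y,-1}(z)-1$ exactly as before: swap the sums, insert $R^j$ from Corollary~\ref{Rj_neg_theta}, and split into even and odd $j$. The only change is the exponential factor. It is now $e^{-\alpha_j y}=e^{jy}=(e^{y})^j$, so it merges with $z^j$. The even series becomes $\sum_j (-1)^j (uze^y)^{2j}/(2j)!=\cos(uze^y)$ minus its constant term $1$. The odd series becomes $\sum_j(-1)^j(uz)^{2j}e^{(2j+1)y}/(2j+1)!$, which equals $\sin(uze^y)/(uz)$. The prefactors are the ones already computed in Lemma~\ref{gf_neg_theta}, so this gives $\cos(uze^y)-\gamma^{1/2}\frac{R^1(z)}{(1-\xi)z}\sin(uze^y)$. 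These agree with the continuous extensions $\cos_1(z,y)=\cos(ze^y)$ and $\sin_1(z,y)=\sin(ze^y)$.

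For $y>0$, I follow the second computation. The factors $(1-e^{\alpha_j y})=(1-e^{-jy})$ split each series into the $y$-free part and a part with argument $uze^{-y}$. The $y$-free parts are $\cos(uz)$ and $\sin(uz)$, and they combine with $Q$ from Lemma~\ref{gf_other_neg_theta}. The cancellation step uses only the algebraic identities for $\frac{R^1}{(\xi-1)z}+1$ and $\frac{R^1}{\xi z}+1$, which do not depend on $\theta$. What remains is $\gamma^{-1/2}\sin(uze^{-y})+\frac{R^1(z)}{(1-\xi)z}\cos(uze^{-y})$.

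The only point needing care is the absolute convergence that justifies swapping the sums. This is easier here than in Lemma~\ref{gf_neg_theta}: for $|y|$ fixed all series are dominated by exponential series in $|z|e^{|y|}$, so they converge on $|z|<\epsilon$. There is no real obstacle; the corollary is a bookkeeping specialization.
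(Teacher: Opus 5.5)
Your proposal is correct and follows the paper's own route. The paper obtains this corollary by rerunning the computation in the proof of Lemma~\ref{gf_neg_theta} with $\alpha_n=-n$ and $[j]_1!=j!$. The factors $e^{jy}$ (for $y\le 0$) and $1-e^{-jy}$ (for $y>0$) are then absorbed into the arguments of $\cos$ and $\sin$, and the same $\theta$-independent cancellation with $Q$ finishes the proof, exactly as you describe.
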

\subsection{Singularities of the generating functions}
Before locating the dominant singularity, we show that the radius of convergence does not depend on the starting point.
\begin{Lemma}\label{roc_P_not_depending_on_y}
     For $\theta<0$ and $\xi\in(0,1)$, the radius of convergence $\delta$ of $\hat{P}_{\xi,y,\theta}(z)$ does not depend on $y$. In particular, $\hat{P}_{\xi,y,\theta}(z)$ has the same radius of convergence as $\hat{P}_{\xi,0,\theta}(z)$.
\end{Lemma}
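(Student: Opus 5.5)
The radius of convergence can be compared across starting points by squeezing $p_n^{\xi,y}(\theta)$ between constant multiples of $p_n^{\xi,0}(\theta)$, using only the Markov property and Lemma~\ref{estimate_Px}. Comparing coefficient sequences termwise then compares radii of convergence, since the radius is $1/\limsup_n (p_n^{\xi,y}(\theta))^{1/n}$.

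\emph{Upper bound.} By Lemma~\ref{estimate_Px}, $p_n^{\xi,y}(\theta)\le p_{n-1}^{\xi}(\theta)$ for every $y$. Since $\theta<0$, the condition $X_2>\theta\cdot 0=0$ and $X_1=X_2$-type relabelling give
\[
p_{n-1}^{\xi}(\theta)=\int_\R \phi(x)\,p_{n-2}^{\xi,x}(\theta)\,\d x .
\]
This is awkward, so instead I would compare with the start $0$ directly. Conditioning on the first step from $y=0$ gives
\[
p_n^{\xi,0}(\theta)=\int_{\{x>0\}}\phi(x)\,p_{n-1}^{\xi,x}(\theta)\,\d x .
\]
A more robust route uses monotonicity in the starting point. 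For $\theta<0$ the constraint $X_2>\theta y$ is weaker the larger $y$ is, so $y\mapsto p_n^{\xi,y}(\theta)$ is non-decreasing, and also $p_n^{\xi,y}(\theta)\le 1$.

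\emph{Lower bound for $y\le 0$.} Monotonicity gives $p_n^{\xi,y}\le p_n^{\xi,0}$. For the reverse direction, I would force the first innovation to be large. On the event $X_2>\max(\theta y,0)$, the Markov property and monotonicity yield
\[
p_n^{\xi,y}\ge \int_{\{x>\max(\theta y,0)\}}\phi(x)\,p_{n-1}^{\xi,x}\,\d x\ge \P(X>|\theta y|)\,p_{n-1}^{\xi,0}\ge c_y\,p_n^{\xi,0},
\]
where $c_y=\P(X>|\theta y|)>0$ because $\xi<1$.

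\emph{Bounds for $y>0$.} Monotonicity gives $p_n^{\xi,y}\ge p_n^{\xi,0}$. For the upper bound,
\[
p_n^{\xi,y}\le p_{n-1}^{\xi}=\int \phi(x)\,p_{n-2}^{\xi,x}\,\d x\le p_{n-2}^{\xi,\infty},
\]
so I need to control the supremum over starting points by the start at $0$. I would write
\[
p_{n-1}^{\xi}=\int_{\{x>0\}}\phi(x)\,p_{n-2}^{\xi,x}\,\d x+\int_{\{x\le 0\}}\phi(x)\,p_{n-2}^{\xi,x}\,\d x .
\]
The second integral is at most $p_{n-2}^{\xi,0}$. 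The first integral is exactly $p_{n-1}^{\xi,0}$, by the first-step decomposition from $0$. Hence $p_n^{\xi,y}\le p_{n-1}^{\xi,0}+p_{n-2}^{\xi,0}$.

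Combining the bounds, for each fixed $y$ the $n$-th root limsup of $p_n^{\xi,y}(\theta)$ equals that of $p_n^{\xi,0}(\theta)$. Index shifts by one or two and multiplicative constants do not affect $\limsup_n (\cdot)^{1/n}$. Therefore the radii of convergence coincide.

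The main point to verify carefully is the monotonicity claim in $y$. It follows because increasing $y$ shrinks the set $\{X_2\le\theta y\}$ when $\theta<0$, while all later constraints do not depend on $y$. Alternatively, one can avoid monotonicity by using the explicit formulas in Lemma~\ref{gf_neg_theta}: $\sin_{-\theta}(uz,\pm y)$ and $\cos_{-\theta}(uz,\pm y)$ introduce no new singularities by Lemma~\ref{sin_e_and_cos_e_prelim} and Corollary~\ref{sine_and_cose_prelim}. That route, however, requires excluding cancellations of poles, so I would use the probabilistic argument.
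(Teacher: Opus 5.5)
Your proof is correct. For $y\le 0$ it is the paper's argument: monotonicity of $y\mapsto p_n^{\xi,y}(\theta)$ (valid since $\theta<0$) gives $p_n^{\xi,y}\le p_n^{\xi,0}$. Restricting the first step to $X_2>\theta y\ge 0$ gives $p_n^{\xi,y}\ge(1-\xi)e^{-\theta y}p_{n-1}^{\xi,0}$.

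For $y>0$ your upper bound takes a different route. The paper stays with the first-step recursion from $0$ and discards the part $t\le y$. This gives $p_{n+1}^{\xi,0}\ge(1-\xi)\int_y^\infty e^{-t}p_n^{\xi,t}(\theta)\,\d t\ge(1-\xi)e^{-y}p_n^{\xi,y}(\theta)$. The comparison constant $e^{y}/(1-\xi)$ depends on $y$, but no further lemma is needed. You instead pass through Lemma~\ref{estimate_Px} to the stationary start and split according to the sign of the first value. That costs one extra lemma but gives a bound uniform in $y$. Together with the trivial inequality $p_n^{\xi,0}\le p_{n-1}^{\xi}$, the same split also shows that $\sum_n p_n^{\xi}(\theta)z^n$ has this radius of convergence.

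There is one off-by-one slip. Conditioning on $X_1=x$ gives $p_{n-1}^{\xi}(\theta)=\int_\R\phi(x)\,p_{n-1}^{\xi,x}(\theta)\,\d x$, not $p_{n-2}^{\xi,x}$. With the correct index, the split yields $p_n^{\xi,y}\le p_n^{\xi,0}+\xi\,p_{n-1}^{\xi,0}$. Your stated bound $p_{n-1}^{\xi,0}+p_{n-2}^{\xi,0}$ is still true: read your equality as $p_{n-1}^{\xi}\le p_{n-2}^{\xi}$ and use that $m\mapsto p_m^{\xi,0}$ is non-increasing. So the conclusion is unaffected.
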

\begin{proof}
    To simplify notation, abbreviate $p_n(x)\coloneqq p_n^{\xi,x}(\theta)$ and $\hat{P}_x(z)\coloneqq \hat{P}_{\xi,x,\theta}(z)$. Recall that $x\mapsto p_n(x)$ is monotonically increasing for all $n\in\N_0$ since $\theta<0$ and 
    \begin{align}\label{proof_roc}
        p_{n+1}(x)=\int_{\{t>\theta x\}\cap(-\infty,0)}\xi  e^tp_n(t)\d t+\int_{\{t>\theta x\}\cap(0,\infty)}(1-\xi)  e^{-t}p_n(t)\d t.
    \end{align}
Let $y>0$. By monotonicity, $p_n(y)\geq p_n(0)$ and by \eqref{proof_roc},
\begin{align*}
   p_{n+1}(0)&= \int_{(0,\infty)}(1-\xi)  e^{-t}p_n(t)\d t
   \ge (1-\xi)\int_{(y,\infty)}  e^{-t}p_n(t)\d t
   \ge (1-\xi)e^{-y}p_n(y),
\end{align*}
so $p_n(0)\le p_n(y)\le\frac{e^y}{1-\xi}p_{n+1}(0)$. In particular, for any $r>0$, it holds $\hat{P}_0(r)\le\hat P_y(r)$ as well as
\begin{align*}
    \hat P_y(r)=\sum_{n=0}^\infty p_n(y)r^n\le \frac{e^y}{1-\xi}\sum_{n=0}^\infty p_{n+1}(0)r^n=\frac{e^y}{r(1-\xi)}(\hat P_0(r)-1).
\end{align*}
Now let $y\le 0$, so $p_n(y)\le p_n(0)$ by monotonicity, and $\theta y\ge 0$ together with \eqref{proof_roc} yield
\begin{align*}
    p_{n+1}(y)&= \int_{(\theta y,\infty)}(1-\xi)  e^{-t}p_n(t)\d t
   \ge (1-\xi)p_n(0)\int_{(\theta y,\infty)}  e^{-t}\d t
   =(1-\xi)e^{-\theta y}p_n(0).
\end{align*}
For any $r>0$, we thus have $\hat P_y(r)\le \hat P_0(r)$ and 
\begin{align*}
     \hat P_y(r)-1=\sum_{n=0}^\infty p_{n+1}(y)r^{n+1}\ge (1-\xi)e^{-\theta y} r \hat P_0(r).
\end{align*}
In particular, for any $y\in\R$ and $r>0$, $\hat P_y(r)<\infty$ if and only if $\hat P_0(r)<\infty$, and since all coefficients are non-negative, the two power series have the same radius of convergence.
\end{proof} 
To determine the radius of convergence of $\hat{P}_{\xi,y,\theta}$, it suffices by the preceding lemma to consider the case $y=0$.  Lemma~\ref{gf_neg_theta} and Corollary~\ref{gf_theta-1} yield  
     \begin{align}\label{quotient_hat_P_thetaneg}
      \hat{P}_{\xi,0,\theta}(z)&=  
      \cos_{-\theta}(uz)
     -\gamma^{1/2}\frac{R^1(z)}{(1-\xi) z}\sin_{-\theta}(uz)\notag
     \\&=\cos_{-\theta}(uz)
     -\gamma^{1/2}\frac{\cos_{-\theta}(uz)-\gamma^{-1/2}\sin_{-\theta}(uz)}{\cos_{-\theta}(uz)+\gamma^{1/2}\sin_{-\theta}(uz)}\sin_{-\theta}(uz)\notag
     \\&=\frac{\cos_{-\theta}(uz)^2+\sin_{-\theta}(uz)^2}{\cos_{-\theta}(uz)+\gamma^{1/2}\sin_{-\theta}(uz)}.
    \end{align}
We denote the denominator by $$g(z)\coloneqq\cos_{-\theta}(uz)+\gamma^{1/2}\sin_{-\theta}(uz)=\cos_{-\theta}(-uz)-\gamma^{1/2}\sin_{-\theta}(-uz),$$
and set $z_0\coloneqq\frac{1}{\lambda}$, where we recall that $\lambda=\frac{-u}{\arctan_{-\theta}\left(\gamma^{-1/2}\right)}$.

The following lemmas provide the analytic ingredients needed to extract the coefficient asymptotics of $\hat{P}_{\xi,y,\theta}$. We first show that $z_0$ is the unique zero of $g$ in the closed disk $|z|\leq z_0$ and that this zero is simple. We then verify that all additional possible singularities arising from the representation of $\hat{P}_{\xi,y,\theta}$ are removable. These results identify $z_0$ as the radius of convergence and as the unique dominant simple pole. Finally, as in the other cases, Lemma~\ref{asymptotic_powerseries} yields the
persistence asymptotics once the corresponding residue and its
sign have been determined.
\begin{Lemma}\label{zeros_g_neg_theta}
    For $\theta<0$ and $\xi\in(0,1)$, the unique zero $z$ of $g(z)$ with $|z|\le z_0$ is given by $z=z_0$. 
\end{Lemma}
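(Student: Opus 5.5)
The plan is to substitute $w\coloneqq -uz=\sqrt{\xi(1-\xi)}\,z$ and $q\coloneqq-\theta>0$. Then $g(z)=\cos_q(w)-\gamma^{1/2}\sin_q(w)$, and $|z|\le z_0$ becomes $|w|\le a\coloneqq\arctan_q(\gamma^{-1/2})=-u z_0$. The argument has two steps. First, every zero of $g$ is real. Second, among real $w$ with $|w|\le a$ the only zero is $w=a$, which corresponds to $z=z_0$.

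\emph{Step 1 (non-real zeros are excluded).} By \eqref{sin_sum_e}, with $E_\pm\coloneqq e_q(\pm iw)$ and $c\coloneqq\gamma^{1/2}$, we have
\begin{align*}
2g(z)=E_+(1+ic)+E_-(1-ic).
\end{align*}
For $0<q<1$ the poles of $\cos_q,\sin_q$ lie at $\pm i/((1-q)q^n)$ (Corollary~\ref{sin_and_cos_prelim_1}). At these points exactly one of $E_\pm$ has a pole, so $g$ has a pole there and not a zero. Hence at any zero both $E_\pm$ are finite, and by Lemma~\ref{e_theta_singularities_thetale1} and Lemma~\ref{e_theta_entire_thetage1} they are nonzero, because a common zero of $\cos_q$ and $\sin_q$ is excluded by Corollary~\ref{sin_and_cos_prelim_3}. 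A zero then forces
\begin{align*}
E_+/E_-=-(1-ic)/(1+ic),
\end{align*}
which has modulus $1$, so $|e_q(iw)|=|e_q(-iw)|$. Lemma~\ref{estimate_e_theta_ge_1_prelim}, applied at the point $iw$ and covering $q=1$ via the ordinary exponential, says this equality is impossible when $\Re(iw)=-\mathrm{Im}\,w\neq0$. So every zero satisfies $w\in\R$. This is the step I expect to be the real content; the rest is bookkeeping on the real line.

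\emph{Step 2 (real zeros).} By Corollary~\ref{arctan_cor} and Lemma 12 of \cite{AR}, $\tan_q$ is a strictly increasing bijection $[0,z_c)\to[0,\infty)$, and $a<z_c$. Since $\cos_q(0)=1$ and $z_c$ is the first positive zero of $\cos_q$, we have $\cos_q>0$ on $[0,z_c)$. Then $\tan_q>0$ on $(0,z_c)$ gives $\sin_q>0$ there.
\begin{itemize}
\item For $w\in[0,a]$, the equation $g=0$ is equivalent to $\tan_q(w)=\gamma^{-1/2}$, i.e.\ $w=a$, because $\tan_q$ is injective.
\item For $w\in[-a,0)$, we use that $\cos_q$ is even and $\sin_q$ is odd, so $\sin_q(w)=-\sin_q(|w|)<0$. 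Hence $g=\cos_q(|w|)+\gamma^{1/2}\sin_q(|w|)>0$, and there is no zero.
\end{itemize}
Thus the only zero in $|w|\le a$ is $w=a$, i.e.\ $z=a/(-u)=1/\lambda=z_0$. This proves the lemma. Simplicity of the zero is not claimed here, but it will later follow from Corollary~\ref{estimate_pos_e_prime_e}.
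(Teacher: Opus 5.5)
Your proof is correct and follows essentially the same route as the paper. Via $w=-uz$ (the paper's $x=-iuz=iw$), you exclude non-real zeros: the pole case for $-1<\theta<0$ is handled separately, and otherwise $|e_{-\theta}(iw)|=|e_{-\theta}(-iw)|$ contradicts Lemma~\ref{estimate_e_theta_ge_1_prelim}. On the real segment you use the bijectivity of $\tan_{-\theta}$ on $[0,z_c)$ together with $\cos_{-\theta}>0$, $\sin_{-\theta}>0$ there and the parity of $\cos_{-\theta}$ and $\sin_{-\theta}$. The differences are only cosmetic: the order of the cases, and your check that $E_\pm\neq 0$ at a zero, which you do not actually need because $|E_+|=|E_-|$ already violates the strict inequality.
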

\begin{proof}
 In the case $\cos_{-\theta}(-uz)=0$, $\gamma\neq 0$ implies $\sin_{-\theta}(-uz)=0$ for any zero $z$ of $g$, a contradiction to Corollary~\ref{sin_and_cos_prelim_3}, so $\cos_{-\theta}(-uz)\neq 0$ for every zero $z$ of $g$. For real $0<z\le z_0$, we have that $ g(z)= \cos_{-\theta}(-uz)-\gamma^{1/2}\sin_{-\theta}(-uz)=0$ if and only if $\frac{\gamma^{1/2}\sin_{-\theta}(-uz)}{\cos_{-\theta}(-uz)}=1$. This is equivalent to $\tan_{-\theta}(-uz)=\gamma^{-1/2}$, which has the unique solution $z=z_0$ since $\tan_{-\theta}:[0,z_c)\to[0,\infty)$ is bijective and 
\begin{align*}
    0<-uz\le -uz_0=\arctan_{-\theta}(\gamma^{-1/2})<z_c.
\end{align*}
Now consider real $-z_0\leq z\leq 0$. We have $0\le uz\le-u{z_0}<z_c$ and $\cos_{-\theta}(0)=1$, implying $\cos_{-\theta}(-u{z})=\cos_{-\theta}(u{z})> 0$ since $z_c$ is the smallest positive real zero of $\cos_{-\theta}$. Moreover, since $\tan_{-\theta}$ is a non-negative function, we have $\sin_{-\theta}(uz)\ge0$ as well, so
\begin{align*}
    g(z)&=\cos_{-\theta}(-uz)-\gamma^{1/2}\sin_{-\theta}(-uz)
        =\cos_{-\theta}(uz)+\gamma^{1/2}\sin_{-\theta}(uz)>0.
\end{align*}
Finally, let $z\in\C\backslash \R$ and set $x\coloneqq -iuz$, so $\Re(x)\ne 0$. Then, $g(z)= \cos_{-\theta}(-uz)-\gamma^{1/2}\sin_{-\theta}(-uz)=0$ if and only if $ \frac{e_{-\theta}(x)+e_{-\theta}(-x)}{2}-\gamma^{1/2}\frac{e_{-\theta}(x)-e_{-\theta}(-x)}{2i}=0$. This is equivalent to $ e_{-\theta}(x)+e_{-\theta}(-x)+i\gamma^{1/2}(e_{-\theta}(x)-e_{-\theta}(-x))=0$ and thus to $e_{-\theta}(x)=-\frac{1-i\gamma^{1/2}}{1+i\gamma^{1/2}}e_{-\theta}(-x)$.
For $\theta<-1$, since $\left|\frac{1-i\gamma^{1/2}}{1+i\gamma^{1/2}}\right|=1$ and $\Re(x)\ne 0$, Lemma~\ref{estimate_e_theta_ge_1_prelim} yields that there is no such $z$ with  $e_{-\theta}(x)=-\frac{1-i\gamma^{1/2}}{1+i\gamma^{1/2}}e_{-\theta}(-x)$. The same argument holds for $-1<\theta<0$ if $\pm x$ is no pole of $e_{-\theta}$. If either $x$ or $-x$ is a pole of $e_{-\theta}$, then exactly one of the two terms in the representation of $g$ has a pole with non-zero coefficient, so $g$ has a non-removable singularity in $z$ and hence $z$ is not a zero of $g$. For $\theta=-1$, this follows from the identity $|e^x|=e^{\Re(x)}$. In conclusion, there is no non-real zero of $g$.
\end{proof}
\begin{Lemma}\label{z_0_simple}
   For $\theta<0$ and $\xi\in(0,1)$, $z_0$ is a simple zero of $g$, and the following constant is finite:
   \begin{align}\label{c_non_zero_negtheta}
       c(\theta,\lambda,\xi)\coloneqq \lim_{z\to z_0}\left(z_0-z\right)\frac{R^1(z)}{(1-\xi) z}=-\frac{\cos_{-\theta}(uz_0)-\gamma^{-1/2}\sin_{-\theta}(uz_0)}{g'(z_0)}\neq 0.
  \end{align}
\end{Lemma}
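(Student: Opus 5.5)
The plan is to rewrite $g$ in terms of the positive real variable $w\coloneqq -uz$, in which $z_0$ corresponds to $w_0\coloneqq -uz_0=\arctan_{-\theta}(\gamma^{-1/2})\in(0,z_c)$. Then simplicity of the zero reduces to strict monotonicity of $\tan_{-\theta}$ at $w_0$, which I would obtain from Corollary~\ref{estimate_pos_e_prime_e}. Throughout, write $q\coloneqq-\theta>0$. By the parity of $\cos_q$ and $\sin_q$ we have $g(z)=G(-uz)$ with
\[
G(w)\coloneqq\cos_q(w)-\gamma^{1/2}\sin_q(w).
\]
Since $-u=\sqrt{\xi(1-\xi)}>0$, the map $z\mapsto -uz$ sends $z_0$ to $w_0$. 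Moreover, $w_0$ is a regular point of $\cos_q$ and $\sin_q$: for $q\ge1$ these functions are entire by Corollary~\ref{sin_and_cos_prelim_2}, and for $0<q<1$ their poles are purely imaginary by Corollary~\ref{sin_and_cos_prelim_1}.

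\textbf{Step 1: factor $G$ near $w_0$.} On $[0,z_c)$ we have $\cos_q>0$, so in a real neighbourhood of $w_0$
\[
G(w)=\cos_q(w)\bigl(1-\gamma^{1/2}\tan_q(w)\bigr).
\]
Using $\tan_q(w_0)=\gamma^{-1/2}$, differentiation gives
\[
G'(w_0)=-\gamma^{1/2}\cos_q(w_0)\tan_q'(w_0),
\qquad
g'(z_0)=-u\,G'(w_0)=u\gamma^{1/2}\cos_q(w_0)\tan_q'(w_0).
\]
It therefore suffices to show that $\tan_q'(w_0)>0$.

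\textbf{Step 2: $\tan_q'>0$ on $(0,z_c)$.} This is the main point. For $q=1$ it is classical, so let $q\neq1$. Set $A\coloneqq e_q(iw)$ and $B\coloneqq e_q(-iw)$. By \eqref{sin_sum_e}, $\tan_q=(A-B)/(i(A+B))$, and $A'=ie_q'(iw)$, $B'=-ie_q'(-iw)$. A direct quotient-rule computation gives
\[
\tan_q'(w)=\frac{2AB}{(A+B)^2}\left(\frac{e_q'(iw)}{e_q(iw)}+\frac{e_q'(-iw)}{e_q(-iw)}\right).
\]
Since $e_q$ has real Taylor coefficients, $B=\overline{A}$, so $AB=|A|^2>0$; here $A\neq0$ by Lemmas~\ref{e_theta_singularities_thetale1} and \ref{e_theta_entire_thetage1}. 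Also $(A+B)^2=4\cos_q(w)^2>0$ for $w<z_c$. The bracket is real and positive by Corollary~\ref{estimate_pos_e_prime_e}. Hence $\tan_q'(w_0)>0$, so $g'(z_0)\neq0$ (indeed $g'(z_0)<0$, as $u<0$), and $z_0$ is a simple zero of $g$.

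\textbf{Step 3: the constant.} Consider the numerator $N(z)\coloneqq\cos_q(uz)-\gamma^{-1/2}\sin_q(uz)$. By parity,
\[
N(z_0)=\cos_q(w_0)+\gamma^{-1/2}\sin_q(w_0).
\]
This is strictly positive, because $\cos_q(w_0)>0$ and $\sin_q(w_0)=\cos_q(w_0)\tan_q(w_0)>0$. Since $N$ and $g$ are holomorphic near $z_0$, $g(z_0)=0$ and $g'(z_0)\neq0$, the representation of Lemma~\ref{gf_neg_theta} gives
\[
(z_0-z)\frac{R^1(z)}{(1-\xi)z}=\frac{(z_0-z)N(z)}{g(z)}\longrightarrow-\frac{N(z_0)}{g'(z_0)}.
\]
The right-hand side is finite and nonzero; in fact it is positive. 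This proves \eqref{c_non_zero_negtheta}.

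The only nontrivial step is the positivity of $\tan_q'$ in Step~2, which rests on the conjugate symmetry $B=\overline A$ together with Corollary~\ref{estimate_pos_e_prime_e}.
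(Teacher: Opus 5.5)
Your proof is correct and is essentially the paper's argument. Both reduce $g'(z_0)\neq 0$ to the positivity of $\frac{e_q'(iw_0)}{e_q(iw_0)}+\frac{e_q'(-iw_0)}{e_q(-iw_0)}$ from Corollary~\ref{estimate_pos_e_prime_e}. The paper differentiates $F=\cos_{-\theta}-\gamma^{1/2}\sin_{-\theta}$ directly in the $e_{-\theta}$-representation and uses $F(w_0)=0$ to combine the two terms, whereas you package the same computation as $\tan_q'(w)=\frac{|e_q(iw)|^2}{2\cos_q(w)^2}\bigl(\frac{e_q'(iw)}{e_q(iw)}+\frac{e_q'(-iw)}{e_q(-iw)}\bigr)>0$ on $(0,z_c)$. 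A small advantage of your route is that it fixes the sign $g'(z_0)<0$, so you get $c(\theta,\lambda,\xi)>0$ immediately; the paper only obtains this later, in Lemma~\ref{h_pos_thetale-1}, via positivity of the coefficients and of $h$.
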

\begin{proof}
We show that $g'(z_0)\ne 0$. To do so, set $F(z)\coloneqq \cos_{-\theta}(z)-\gamma^{1/2}\sin_{-\theta}(z)$, so $g(z)=F(-uz)$, $g'(z_0)=-uF'(-uz_0)$ and $F(-uz_0)=0$. First, let $\theta\neq -1$. By Corollary~\ref{estimate_pos_e_prime_e}, for every real $z>0$, 
\begin{align*}
    \frac{e'_{-\theta} (iz)}{e_{-\theta} (iz)}+\frac{e'_{-\theta}(-iz)}{e_{-\theta}(-iz)}>0.
\end{align*}
In particular, this holds for $z=-uz_0$ since $u<0$.
Plugging in the representations of $\sin_{-\theta}$ and $\cos_{-\theta}$, we can rewrite
\begin{align*}
    F(z)=\frac{(1+i\gamma^{1/2})e_{-\theta} (iz)+(1-i\gamma^{1/2})e_{-\theta}(-iz)}{2}.
\end{align*}
Evaluating the derivative at $-uz_0$, we get
\begin{align*}
    F'(-uz_0)=\frac{i}{2}\left((1+i\gamma^{1/2})e'_{-\theta} (-iuz_0)-(1-i\gamma^{1/2})e'_{-\theta}(iuz_0)\right).
\end{align*}
The assumption $F(-uz_0)=0$ implies $(1+i\gamma^{1/2})e_{-\theta}(-iuz_0)=-(1-i\gamma^{1/2})e_{-\theta}(iuz_0)$. Since Corollary~\ref{estimate_pos_e_prime_e} implies $e_{-\theta}(\pm iuz_0)\neq0$, we get
\begin{align*}
    F'(-uz_0)
    &=\frac{i}{2}\left((1+i\gamma^{1/2})e_{-\theta}(-iuz_0)\frac{e'_{-\theta}(-iuz_0)}{e_{-\theta}(-iuz_0)}-(1-i\gamma^{1/2})e_{-\theta}(iuz_0)\frac{e'_{-\theta}(iuz_0)}{e_{-\theta}(iuz_0)}\right)
    \\&=-\frac{i}{2}(1-i\gamma^{1/2})e_{-\theta} (iuz_0)\left(\frac{e'_{-\theta}(-iuz_0)}{e_{-\theta}(-iuz_0)}+\frac{e'_{-\theta} (iuz_0)}{e_{-\theta} (iuz_0)}\right)\ne 0,
\end{align*}
which yields that $z_0$ is a simple zero of $g$. For $\theta=-1$, this follows directly from $-uz_0=\arctan(\gamma^{-1/2})\in\left(0,\frac{\pi}{2}\right)$, so that
\begin{align*}
    F'(-uz_0)=-\sin(-uz_0)-\gamma^{1/2}\cos(-uz_0)<0.
\end{align*}
The rest of the claim now follows since $\cos_{-\theta}(-uz_0)=\gamma^{1/2}\sin_{-\theta}(-uz_0)> 0$ and $z_0$ is a simple zero of $g$, so Taylor's formula applied to $\frac{R^1(z)}{(1-\xi) z}=\frac{\cos_{-\theta}(uz)-\gamma^{-1/2}\sin_{-\theta}(uz)}{g(z)}$ yields
\begin{align*}
      \lim_{z\to z_0}\left(z_0-z\right)\frac{R^1(z)}{(1-\xi) z}=-\frac{\cos_{-\theta}(uz_0)-\gamma^{-1/2}\sin_{-\theta}(uz_0)}{g'(z_0)}\neq 0. \tag*{\qedhere}
\end{align*}
\end{proof}
For the next lemma, recall the definition of $z_k^\pm$ given in \eqref{def_zk}. Since we work with $-\theta$ instead of $\theta$, set analogously, for $-1<\theta<0$,
\begin{align*}
       w_k^+\coloneqq \frac{i}{(1+\theta)(-\theta)^k}\quad\text{and}\quad w_k^-\coloneqq \frac{-i}{(1+\theta)(-\theta)^k}.
   \end{align*}
\begin{Lemma}\label{poles_hatP_theta}
      For $\theta<0$ and $\xi\in(0,1)$, all possible non-removable singularities of $\hat{P}_{\xi,y,\theta}(z)$ are zeros of $g$. 
\end{Lemma}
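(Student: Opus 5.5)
The plan is to read off the possible singularities directly from the representation in Lemma~\ref{gf_neg_theta} (resp.\ Corollary~\ref{gf_theta-1} for $\theta=-1$):
\begin{align*}
\hat P_{\xi,y,\theta}(z)=\gamma^{-1/2}\sin_{-\theta}(uz,-y)+\frac{\cos_{-\theta}(uz)-\gamma^{-1/2}\sin_{-\theta}(uz)}{g(z)}\cos_{-\theta}(uz,-y)
\end{align*}
for $y>0$, and the analogous formula with $\cos_{-\theta}(uz,y)$, $\sin_{-\theta}(uz,y)$ for $y\le0$. The argument splits into the cases $\theta\le -1$ and $-1<\theta<0$.

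\emph{Case $\theta\le-1$.} Here $-\theta\ge1$, so $\sin_{-\theta},\cos_{-\theta}$ are entire by Corollary~\ref{sin_and_cos_prelim_2}. For the shifted functions, the argument is $-y<0$ when $y>0$, and $y\le0$ in the other case. Corollary~\ref{sine_and_cose_prelim} makes them entire; for $y=0$ they reduce to the unshifted functions. The only possible singularities are then zeros of $g$, and this case is done.

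\emph{Case $-1<\theta<0$.} Now $q=-\theta\in(0,1)$. By Corollary~\ref{sin_and_cos_prelim_1} and Lemma~\ref{residue_sin_cos_prelim}, all of $\sin_q(\cdot)$, $\cos_q(\cdot)$, $\sin_q(\cdot,\pm y)$, $\cos_q(\cdot,\pm y)$ are meromorphic, with at most simple poles contained in $Z_q$. In the variable $z$ these are the points $uz\in Z_q$, i.e.\ $z=w_k^\pm/u$ up to the real scaling. Away from these points and from the zeros of $g$, $\hat P$ is holomorphic. It therefore suffices to show that at each point $z^*$ with $uz^*=z_k^\pm$ that is not a zero of $g$, the expression has a removable singularity.

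The cleanest route is to rewrite $\hat P$ in a form where the residues visibly cancel. Take $y\le0$, write $C=\cos_q(uz)$, $S=\sin_q(uz)$, $C_y=\cos_q(uz,y)$, $S_y=\sin_q(uz,y)$, so that
\begin{align*}
\hat P=\frac{C_y\,(C+\gamma^{1/2}S)-(\gamma^{1/2}C-S)\,S_y}{C+\gamma^{1/2}S}=\frac{C\,C_y+S\,S_y+\gamma^{1/2}(S\,C_y-C\,S_y)}{g}.
\end{align*}
At $z^*$ with $uz^*=z_k^+$, Lemma~\ref{residue_sin_cos_prelim} gives $S= iC+O(1)$ and $S_y=iC_y+O(1)$ in their polar parts. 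Writing $C=a/(w-w^*)+\alpha+\dots$, $S=ia/(w-w^*)+\beta+\dots$, and similarly $C_y=b/(\cdot)+\dots$, $S_y=ib/(\cdot)+\dots$, several cancellations occur. The denominator $g$ has a simple pole with coefficient $a(1+i\gamma^{1/2})\neq0$ when $a\ne0$. In the numerator, $CC_y+SS_y$ has double-pole coefficient $ab+(ia)(ib)=0$, and $SC_y-CS_y$ has double-pole coefficient $iab-iab=0$. So the numerator has at most a simple pole, and the quotient is holomorphic at $z^*$. If instead $a=0$, then $g$ is holomorphic there, and the numerator has at most a simple pole coming from $b$, with coefficient $b(C+iS)+\gamma^{1/2}b(S-iC)$ evaluated at $z^*$, i.e.\ $b(1-i\gamma^{1/2})(C+iS)$.

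The point $uz^*=z_k^+$ corresponds to $e_q(iw)$ being analytic there, so $C+iS=e_q(iuz^*)$ finite. This subcase is where I expect the main obstacle. The natural resolution is that if $a=0$, i.e.\ $\mathrm{Res}\, e_q(-iw)=0$, this contradicts Lemma~\ref{e_theta_singularities_thetale1}, since all poles are genuine. Hence $a\neq0$ always at points of $Z_q$, and the first subcase covers everything. The point $z_k^-$ is symmetric, using the $-i$ relations. The case $y>0$ is identical with $(S_{-y},C_{-y})$ and the analogous algebra.

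The remaining care is that the combination's numerator may need checking at order one with the $\alpha,\beta$ terms. Since we only need removability, a simple pole over a simple pole suffices. Thus every non-removable singularity is a zero of $g$.
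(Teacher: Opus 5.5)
Your proof is correct and is essentially the paper's argument. For $\theta\le-1$, all factors other than $1/g$ are entire. For $-1<\theta<0$, removability at $z=w_k^\pm/u$ rests on the same two inputs as in the paper: the residue relations of Lemma~\ref{residue_sin_cos_prelim}, and the fact that $\cos_{-\theta}(uz)$ has a genuine simple pole there, so your subcase $a=0$ never arises. The only difference is bookkeeping: you put everything over the common denominator $g$ and cancel the double-pole coefficients of the numerator, whereas the paper first shows that $R^1(z)/((1-\xi)z)$ extends holomorphically to $w_k^+/u$ with value $-i\gamma^{-1/2}$ and then checks that the residue of $\hat P_{\xi,y,\theta}$ there vanishes.
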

\begin{proof} 
The claim is trivial in the case $\theta\le-1$ since all occurring factors apart from the denominator $g$ in the representation of $\hat{P}_{\xi,y,\theta}(z)$ given in Lemma~\ref{gf_neg_theta} are entire by Corollary~\ref{sin_and_cos_prelim_2} and Corollary~\ref{sine_and_cose_prelim}. 

For $-1<\theta<0$, we identify every possible singularity of $\hat{P}_{\xi,y,\theta}(z)$ not determined by the zeros of $g$ and then show that they are removable singularities. Recall that by Lemma~\ref{gf_neg_theta}, the meromorphic continuation of $\hat{P}_{\xi,y,\theta}$ is given by
\begin{align*}
      \hat{P}_{\xi,y,\theta}(z)=  
      \begin{cases}
          \gamma^{-1/2}\sin_{-\theta}(uz,-y)+\frac{R^1(z)}{(1-\xi) z}\cos_{-\theta}(uz,-y),~~~&y>0,
          \\\cos_{-\theta}(uz,y)
     -\gamma^{1/2}\frac{R^1(z)}{(1-\xi) z}\sin_{-\theta}(uz,y),&y\le 0,
      \end{cases}
    \end{align*}
    where $\frac{R^1(z)}{(1-\xi) z}=\frac{\cos_{-\theta}(uz)-\gamma^{-1/2}\sin_{-\theta}(uz)}{g(z)}$
and the functions $\sin_{-\theta}(uz)$, $\cos_{-\theta}(uz)$, $\sin_{-\theta}(uz,\pm y)$ and $\cos_{-\theta}(uz,\pm y)$ may have poles at $z=w_k^\pm/u=\pm\frac{i}{u(1+\theta)(-\theta)^k}$. First, consider $z=w_k^+/u$. By Lemma~\ref{residue_sin_cos_prelim}, it holds
\begin{align*}
    \mathrm{Res}_{z=w_k^+/u}\sin_{-\theta}(uz)&= i\mathrm{Res}_{z=w_k^+/u}\cos_{-\theta}(uz)
\end{align*}
and analogously for $\sin_{-\theta}(uz,\pm y)$ and $\cos_{-\theta}(uz,\pm y)$. Therefore, we get
\begin{align*}
    \lim_{z\to w_k^+/u}\frac{R^1(z)}{(1-\xi) z}
    &=\frac{\mathrm{Res}_{z=w_k^+/u}\cos_{-\theta}(uz)-\gamma^{-1/2}\mathrm{Res}_{z=w_k^+/u}\sin_{-\theta}(uz)}{\mathrm{Res}_{z=w_k^+/u}\cos_{-\theta}(uz)+\gamma^{1/2}\mathrm{Res}_{z=w_k^+/u}\sin_{-\theta}(uz)}
    \\&=\frac{(1-i\gamma^{-1/2})\mathrm{Res}_{z=w_k^+/u}\cos_{-\theta}(uz)}{(1+i\gamma^{1/2})\mathrm{Res}_{z=w_k^+/u}\cos_{-\theta}(uz)}
    \\&=\frac{1-i\gamma^{-1/2}}{1+i\gamma^{1/2}}=-i\gamma^{-1/2},
\end{align*}
where the resulting denominator in the second step does not vanish at $z= w_k^+/u$: Indeed, by Corollary~\ref{sin_and_cos_prelim_1} and since $u\neq0$,
$w_k^+/u$ is a simple pole of
$z\mapsto\cos_{-\theta}(uz)$. Hence, $\mathrm{Res}_{z=w_k^+/u}\cos_{-\theta}(uz)\neq0$ and thus
\begin{align*}
     \mathrm{Res}_{z=w_k^+/u}g(z)=(1+i\gamma^{1/2} )\mathrm{Res}_{z=w_k^+/u}\cos_{-\theta}(uz)\ne 0.
\end{align*}
Since the numerator and the denominator of $\frac{R^1(z)}{(1-\xi) z}$ have at most simple poles at $z= w_k^+/u$, multiplication of both by $(z- w_k^+/u)$ results in a holomorphic extension in a neighborhood of $w_k^+/u$. 
Hence, the quotient has a removable singularity at $z= w_k^+/u$ and extends holomorphically to this point with value $-i\gamma^{-1/2}$. Thus, we can calculate for $y>0$,
\begin{align*}
   \mathrm{Res}_{z=w_k^+/u}\hat{P}_{\xi,y,\theta}(z)=\gamma^{-1/2}i\mathrm{Res}_{z=w_k^+/u}\cos_{-\theta}(uz,-y)-i\gamma^{-1/2}\mathrm{Res}_{z=w_k^+/u}\cos_{-\theta}(uz,-y)=0
\end{align*}
and for $y\le 0$,
\begin{align*}
   \mathrm{Res}_{z=w_k^+/u}\hat{P}_{\xi,y,\theta}(z)=\mathrm{Res}_{z=w_k^+/u}\cos_{-\theta}(uz,y)-\gamma^{1/2}(-i\gamma^{-1/2})i\mathrm{Res}_{z=w_k^+/u}\cos_{-\theta}(uz,y)=0.
\end{align*}
Since the possible singularity of $\hat{P}_{\xi,y,\theta}(z)$ at $z=w_k^+/u$ is at most simple by Lemma~\ref{residue_sin_cos_prelim} and its residue is zero, it is removable.

The arguments for $z=w_k^-/u$ are similar. To sum up, $\hat{P}_{\xi,y,\theta}(z)$ admits a holomorphic extension across the removable singularities $z=w_k^\pm/u$ and thus, all possible non-removable singularities of $\hat{P}_{\xi,y,\theta}(z)$ are given by zeros of $g$, which shows the claim.
\end{proof}
\begin{Lemma}\label{roc_hatP_theta_le-1}
     For $\theta<0$ and $\xi\in(0,1)$, the radius of convergence $\delta$ of $\hat{P}_{\xi,y,\theta}(z)$ is given by $z_0=\frac{1}{\lambda}$.
\end{Lemma}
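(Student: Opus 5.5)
By Lemma~\ref{roc_P_not_depending_on_y} it suffices to treat $y=0$. There the generating function is given by \eqref{quotient_hat_P_thetaneg}:
\begin{align*}
\hat P_{\xi,0,\theta}(z)=\frac{\cos_{-\theta}(uz)^2+\sin_{-\theta}(uz)^2}{g(z)}.
\end{align*}
The plan is to show that this meromorphic continuation is holomorphic on the open disk $|z|<z_0$ and has a genuine singularity at $z_0$. Since a power series converges exactly up to the nearest singularity of its continuation (analytic on the disk, with the coefficients agreeing near $0$), this gives $\delta=z_0$.

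\emph{Holomorphy on $|z|<z_0$.} By Lemma~\ref{poles_hatP_theta}, every non-removable singularity of $\hat P_{\xi,0,\theta}$ is a zero of $g$. By Lemma~\ref{zeros_g_neg_theta}, the only zero of $g$ in the closed disk $|z|\le z_0$ is $z_0$ itself. Hence, after removing the removable singularities at $w_k^\pm/u$ (for $-1<\theta<0$), $\hat P_{\xi,0,\theta}$ extends holomorphically to $|z|<z_0$. This extension agrees with the power series near $0$ by Lemma~\ref{gf_neg_theta}, so $\delta\ge z_0$.

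\emph{Singularity at $z_0$.} By Lemma~\ref{z_0_simple}, $z_0$ is a simple zero of $g$. It remains to check that the numerator does not vanish at $z_0$. Since $-uz_0=\arctan_{-\theta}(\gamma^{-1/2})\in(0,z_c)$ is real, both $\cos_{-\theta}(uz_0)$ and $\sin_{-\theta}(uz_0)$ are real. Moreover $\cos_{-\theta}(uz_0)=\cos_{-\theta}(-uz_0)>0$, because $z_c$ is the first positive zero and $\cos_{-\theta}(0)=1$. Therefore the numerator satisfies $\cos^2+\sin^2\ge\cos_{-\theta}(uz_0)^2>0$, and $z_0$ is a simple pole. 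Consequently $\delta\le z_0$. (Alternatively, one can use the nonzero residue constant from Lemma~\ref{z_0_simple} via the $y\le0$ representation.)

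\emph{Main obstacle.} The delicate point is making rigorous that the radius of convergence equals the distance to the nearest non-removable singularity, and that the point $z_0$ (singularity-free from the removable set, since $w_k^\pm/u$ is non-real) causes no conflicting pole–zero cancellation. This is handled by the non-vanishing of the numerator argued above. A soft alternative confirms the bound independently: the coefficients $p_n^{\xi,0}(\theta)$ are non-negative, so by Pringsheim's theorem the radius of convergence is itself a singularity on the positive real axis. Since $\hat P_{\xi,0,\theta}$ is holomorphic on $[0,z_0)$, this singularity must be at least $z_0$, and the pole at $z_0$ shows it is exactly $z_0$.
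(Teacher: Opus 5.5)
Your proof is correct and follows essentially the same route as the paper. You reduce to $y=0$ via Lemma~\ref{roc_P_not_depending_on_y}, get $\delta\le z_0$ from the simple pole at $z_0$ (Lemma~\ref{z_0_simple} plus a non-vanishing numerator), and get $\delta\ge z_0$ from Lemmas~\ref{poles_hatP_theta} and~\ref{zeros_g_neg_theta}. The differences are cosmetic: the paper writes the lower bound as a compactness/contradiction argument on the circle $|z|=\delta$ rather than invoking holomorphy on the open disk. Your justification that $\cos_{-\theta}(uz_0)^2+\sin_{-\theta}(uz_0)^2>0$, via reality of the arguments and $\cos_{-\theta}(uz_0)>0$, is more explicit than the paper's appeal to Corollary~\ref{sin_and_cos_prelim_3}.
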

\begin{proof}
    By Lemma~\ref{roc_P_not_depending_on_y}, it suffices to determine the radius of convergence of $\hat{P}_{\xi,0,\theta}(z)$ and, by Lemma~\ref{poles_hatP_theta}, all possible non-removable singularities of $\hat{P}_{\xi,0,\theta}(z)$ are given by zeros of $g$. 
   By Lemma~\ref{zeros_g_neg_theta}, $z_0$ is the smallest real zero of $g$, so we first show that $z_0$ is a simple pole of $\hat{P}_{\xi,0,\theta}(z)$.
 Lemma~\ref{z_0_simple} and \eqref{quotient_hat_P_thetaneg} yield that
\begin{align*}
    \lim_{z\to z_0}\left(z_0-z\right)\hat{P}_{\xi,0,\theta}(z)=-\frac{\cos_{-\theta}(uz_0)^2+\sin_{-\theta}(uz_0)^2}{g'(z_0)}
\end{align*}
 is finite and non-zero since $\cos_{-\theta}(uz_0)^2+\sin_{-\theta}(uz_0)^2\neq 0$ by Corollary~\ref{sin_and_cos_prelim_3}, so $z_0$ is a simple pole of $\hat{P}_{\xi,0,\theta}(z)$ and therefore, $\delta\leq z_0$. 

Now assume that $\delta <z_0$. 
By Lemma~\ref{zeros_g_neg_theta}, for all $z\in\C$ with $|z|=\delta$, we have $g(z)\neq 0$. Lemma~\ref{poles_hatP_theta} yields that $\hat{P}_{\xi,0,\theta}(z)$ has an analytic extension in a neighborhood of every point of the circle $|z|=\delta$. By compactness, finitely many of these neighborhoods cover the circle. Moreover, $\hat{P}_{\xi,0,\theta}(z)$ is analytic on the disk $|z|<\delta$, so there is some $\epsilon >0$ such that $\hat{P}_{\xi,0,\theta}(z)$ admits an analytic continuation to $|z|<\delta+\epsilon$, which is a contradiction to $\delta$ being the radius of convergence of $\hat{P}_{\xi,0,\theta}(z)$. Thus, $\delta=z_0$.
\end{proof}
For $y\in\R$, define
\begin{align}\label{def_eigenfunction_neg_theta}
    h(y)\coloneqq h_{\theta,\xi}(y)&\coloneqq
   \begin{cases}
       \cos_{-\theta}\left(u z_0,-y\right) ,~~&y>0,
        \\-\gamma^{1/2}\sin_{-\theta}\left(u z_0,y\right),~~&y\le 0.
    \end{cases}
\end{align}
Note that for $\theta=-1$, this function becomes $h(y)=\cos\left(u z_0e^{-y}\right)$, for $y>0$, and $h(y)=-\gamma^{1/2}\sin\left(u z_0e^y\right)$, for $y\le 0$.

\begin{Lemma}\label{h_pos_thetale-1}
     For $\theta<0$, $\xi\in(0,1)$ and every $y\in\R$, the only possible non-removable
singularity of $\hat P_{\xi,y,\theta}$ in the closed disk
$|z|\le z_0$ is $z_0$. In fact, $z_0$ is a simple pole. Moreover, $c(\theta,\lambda,\xi)>0$ for $c(\theta,\lambda,\xi)$ defined in Lemma~\ref{z_0_simple} and $h(x)>0$ for all $x\in\R$. 
\end{Lemma}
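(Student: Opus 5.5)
The plan has four parts: locate the singularities in the closed disk, verify that $z_0$ is a simple pole, prove $c(\theta,\lambda,\xi)>0$ by a sign computation, and deduce $h>0$ from the eigenvalue equation.

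For the location of singularities, I combine Lemma~\ref{poles_hatP_theta} with Lemma~\ref{zeros_g_neg_theta}. By Lemma~\ref{poles_hatP_theta}, every non-removable singularity of the meromorphic continuation in Lemma~\ref{gf_neg_theta} (or Corollary~\ref{gf_theta-1}) is a zero of $g$. By Lemma~\ref{zeros_g_neg_theta}, the only zero of $g$ in $|z|\le z_0$ is $z_0$. This gives the first claim for every $y$.

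Next I show that $z_0$ is a simple pole. In the representation of Lemma~\ref{gf_neg_theta}, only the factor $\frac{R^1(z)}{(1-\xi)z}$ can be singular at $z_0$. The other factors $\sin_{-\theta}(uz,\pm y)$ and $\cos_{-\theta}(uz,\pm y)$ are holomorphic there, because their possible poles $w_k^\pm/u$ are non-real. By Lemma~\ref{z_0_simple}, that quotient has a simple pole with residue-type constant $c=c(\theta,\lambda,\xi)\neq 0$. Hence
\begin{align*}
\lim_{z\to z_0}(z_0-z)\hat P_{\xi,y,\theta}(z)=
\begin{cases} c\,\cos_{-\theta}(uz_0,-y), & y>0,\\ -\gamma^{1/2}c\,\sin_{-\theta}(uz_0,y), & y\le 0,\end{cases}
\end{align*}
which equals $c\,h(y)$ by \eqref{def_eigenfunction_neg_theta}. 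So $z_0$ is a pole of order at most one, and it is simple exactly when $h(y)\neq0$.

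For the sign of $c$, I use the formula in Lemma~\ref{z_0_simple}. Since $\cos_{-\theta}$ is even and $\sin_{-\theta}$ is odd, the numerator satisfies
\[
\cos_{-\theta}(uz_0)-\gamma^{-1/2}\sin_{-\theta}(uz_0)=\cos_{-\theta}(-uz_0)+\gamma^{-1/2}\sin_{-\theta}(-uz_0).
\]
This is positive because $0<-uz_0<z_c$ forces $\cos_{-\theta}(-uz_0)>0$ and $\sin_{-\theta}(-uz_0)>0$. The latter holds since $\tan_{-\theta}(-uz_0)=\gamma^{-1/2}>0$. It remains to show $g'(z_0)<0$. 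On $[0,z_0)$ we have $g>0$: indeed $g(0)=1$ and $z_0$ is the first positive zero by Lemma~\ref{zeros_g_neg_theta}. Since $z_0$ is a simple zero, $g'(z_0)\le 0$ and $g'(z_0)\neq0$, so $g'(z_0)<0$. Therefore $c>0$.

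Positivity of $h$ is the step I expect to be the main obstacle. The partial fraction view gives $p_n^{\xi,y}(\theta)\approx c\,h(y)\lambda^{n+1}$ plus lower-order terms, so $h(y)\ge0$ would follow from $p_n\ge0$. However, this needs $z_0$ to be the unique singularity on the circle $|z|=z_0$, which is exactly what the first part supplies. I would argue as follows. Remove the pole part $\frac{c\,h(y)}{z_0-z}$ from $\hat P_{\xi,y,\theta}$. What remains is analytic on a disk $|z|<z_0+\epsilon$, obtained by the compactness argument used in Lemma~\ref{roc_hatP_theta_le-1}. Hence $p_n^{\xi,y}(\theta)=c\,h(y)z_0^{-n-1}+O((z_0+\epsilon/2)^{-n})$. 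Since $p_n\ge0$ and $c>0$, this yields $h(y)\ge0$.

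To upgrade to strict positivity, I use the eigenvalue equation. Lemma~\ref{eigenfunction_lemma} cannot be applied yet, since it presupposes the asymptotics. Instead, I pass to the limit directly in \eqref{proof_roc}, using the bound $p_n(t)\le p_{n-1}$ from Lemma~\ref{estimate_Px} and dominated convergence. This gives $\lambda h(x)=\int_{\{t>\theta x\}}h(t)\phi(t)\d t$.

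Now $h$ is continuous and $h(0)=\cos_{-\theta}(uz_0,0)$ is positive; more simply, $h\not\equiv0$ because $h(0)\neq0$ by Corollary~\ref{sin_and_cos_prelim_3}. The right-hand side of the eigenvalue equation is an integral of a non-negative continuous function over a half-line $\{t>\theta x\}$ with $\theta<0$. Every such half-line contains $(\max(0,\theta x),\infty)$, and as $t\to\infty$ we have $h(t)\to\cos_{-\theta}(uz_0,-\infty)=1$, coming from the $j=0$ term. Hence $h$ is positive on a tail, the integral is strictly positive for every $x$, and $h(x)>0$ for all $x\in\R$.
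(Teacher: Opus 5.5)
Your treatment of the singularities and the identification $\lim_{z\to z_0}(z_0-z)\hat P_{\xi,y,\theta}(z)=c\,h(y)$ match the paper. You reach the signs by a genuinely different route.

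The paper never computes the sign of $g'(z_0)$. It uses Lemma~\ref{roc_hatP_theta_le-1}: the radius of convergence is $z_0$ for \emph{every} $y$, which rests on the comparison Lemma~\ref{roc_P_not_depending_on_y}. So $z_0$ is a genuine pole and $c\,h(y)\neq0$ for all $y$. Continuity of $h$ at $0$ (from $g(z_0)=0$) and the intermediate value theorem then give $h$ a constant sign, and $h(0)>0$ fixes it. Only at the end is $c>0$ read off from the non-negative coefficients.

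You instead get $c>0$ first, analytically. The numerator $\cos_{-\theta}(uz_0)-\gamma^{-1/2}\sin_{-\theta}(uz_0)$ is positive, and $g'(z_0)<0$, which you correctly deduce from $g>0$ on $[0,z_0)$ together with simplicity of the zero. You then get $h\ge0$ from $p_n^{\xi,y}(\theta)\ge0$, and strict positivity from the eigenvalue relation and the behaviour of $h$ at $+\infty$. This avoids the $y$-independence of the radius of convergence, at the cost of an eigenvalue step and a limit of $h$. Given the lemmas already available, the paper's argument is shorter.

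Two of your steps need repair, though both fixes are easy.

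First, dominated convergence is not justified as stated. Lemma~\ref{estimate_Px} gives $p_n^{\xi,t}(\theta)\le p_{n-1}^{\xi}(\theta)$. A majorant for $p_n^{\xi,t}(\theta)/\lambda^{n+1}$ then needs $p_{n-1}^{\xi}(\theta)=O(\lambda^{n})$, which you have not established. In the paper this is Corollary~\ref{asymptotic_theta_neg_ohne_y}, proved later, though independently of the present lemma. Use Fatou's lemma instead. Your pointwise limits $p_n^{\xi,t}(\theta)/\lambda^{n+1}\to c\,h(t)$ together with \eqref{proof_roc} give $\lambda h(x)\ge\int_{\{t>\theta x\}}h(t)\phi(t)\d t$, and that inequality is all your argument needs.

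Second, for $-1<\theta<0$ you cannot read off $\cos_{-\theta}(uz_0,-t)\to1$ term by term from the defining series. The reason is that $|uz_0|=\arctan_{-\theta}(\gamma^{-1/2})$ can exceed that series' radius of convergence $1/(1+\theta)$, for example when $-\theta$ is small and $\xi$ is close to $1$. You need the representation \eqref{meromorhpic_cos}, as in the proof of Corollary~\ref{asymptotic_theta_neg_ohne_y}.

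With these two adjustments your proof is complete.
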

\begin{proof}
   Lemma~\ref{poles_hatP_theta} together with Lemma~\ref{zeros_g_neg_theta} yields the first part of the claim. Moreover, for any $y\in\R$, by Lemma~\ref{roc_hatP_theta_le-1}, $z_0$ is the radius of convergence of $\hat{P}_{\xi,y,\theta}(z)$, so there must be a non-removable singularity $z$ with $|z|=z_0$. By the first part of the claim, the only possible such point is $z_0$, hence, $z_0$ is a non-removable singularity. 
   
   By Lemma~\ref{z_0_simple}, $g$ has a simple zero at $z_0$ and all remaining factors are holomorphic at $z_0$, so the singularity of $\hat{P}_{\xi,y,\theta}(z)$ is a simple pole and therefore, $\lim_{z\nearrow z_0}(z_0-z)\hat{P}_{\xi,y,\theta}(z)\ne 0$. Therefore, the representation of $\hat{P}_{\xi,y,\theta}(z)$ from Lemma~\ref{gf_neg_theta} or Corollary~\ref{gf_theta-1} together with
   \begin{align}\label{eqn_proof_C_pos}
       0\ne \lim_{z\nearrow z_0}(z_0-z)\hat{P}_{\xi,y,\theta}(z)=c(\theta,\lambda,\xi)h(y)
   \end{align}
    and $c(\theta,\lambda,\xi)\ne 0$ from \eqref{c_non_zero_negtheta} implies that $h(y)\neq 0$ for all $y\in\R$.  
    Further, $h$ is continuous: $$\lim_{x\searrow 0}h(x)=\cos_{-\theta}\left(u z_0\right)=-\gamma^{1/2}\sin_{-\theta}\left(u z_0\right)=h(0)$$ follows from $g(z_0)=0$ and on the regions $x>0$, $x\le 0$, $h$ is continuous by definition of the respective functions, so $h$ has constant sign by the intermediate value theorem. 
    
    To show positivity, note that we have $0<-u{z_0}<z_c$, so $\cos_{-\theta}(-u{z_0})=\cos_{-\theta}(u{z_0})> 0$ since $z_c$ is the smallest positive real zero of $\cos_{-\theta}$ and $\cos_{-\theta}(0)=1$. With $g(z_0)=0$, this yields $\sin_{-\theta}(u{z_0})<0$ and thus, $h(0)>0$ which implies $h(x)>0$ for all $x\in\R$.
    
    The fact that $c(\theta,\lambda,\xi)>0$ now follows from \eqref{eqn_proof_C_pos}: The left-hand side is positive since $z_0>0$ and all coefficients of $\hat{P}_{\xi,y,\theta}(z)$ are non-negative. $h>0$ then yields $c(\theta,\lambda,\xi)>0$.
\end{proof}

\subsection{Persistence probabilities and proof of the main theorems}
Having identified the unique dominant simple pole and the sign of its residue, we can now extract the persistence asymptotics.
\begin{Lemma}\label{asymptotic_neg_theta}
  For $\theta<0$,  $\xi\in(0,1)$ and $y\in\R$, we have, as $n\to\infty$,
  \begin{align*}
       p_n^{\xi,y}(\theta)\sim c(\theta,\lambda,\xi)h(y)\lambda^{n+1}
  \end{align*}
  with $h$ defined in \eqref{def_eigenfunction_neg_theta} and $c(\theta,\lambda,\xi)$ defined in \eqref{c_non_zero_negtheta}.
\end{Lemma}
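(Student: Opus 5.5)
We apply Lemma~\ref{asymptotic_powerseries} to the generating function $\hat P_{\xi,y,\theta}$, taking $z_0=\frac1\lambda$ as the dominant pole, exactly as in Lemmas~\ref{asymptotic_pos_theta_neg_y} and \ref{asymptotic_theta_ge1_neg_y}. Fix $y\in\R$ and set
\begin{align*}
f(z)\coloneqq (z_0-z)\hat P_{\xi,y,\theta}(z),
\end{align*}
where $\hat P_{\xi,y,\theta}$ denotes the meromorphic continuation from Lemma~\ref{gf_neg_theta}, or from Corollary~\ref{gf_theta-1} if $\theta=-1$. By Lemma~\ref{roc_hatP_theta_le-1}, the radius of convergence is $z_0$, so the identity $\sum_n p_n^{\xi,y}(\theta)z^n=f(z)/(z_0-z)$ holds for $|z|<z_0$.

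The first step is to show that $f$ is analytic on a disk $|z|<z_1$ with $z_1>z_0$.
\begin{itemize}
\item By Lemma~\ref{h_pos_thetale-1}, the only possible non-removable singularity of $\hat P_{\xi,y,\theta}$ in the closed disk $|z|\le z_0$ is $z_0$, and it is a simple pole. Hence $f$ has a removable singularity at $z_0$.
\item Every other point $w$ with $|w|=z_0$ is either a regular point or a removable singularity. Removable singularities include the points $w_k^\pm/u$ from Lemma~\ref{poles_hatP_theta}, and one checks these satisfy $|w_k^\pm/u|\ge 1/(|u|(1+\theta))$.
\item Each such $w$ therefore has a neighborhood on which $f$ extends holomorphically.
\item The circle $|z|=z_0$ is compact, so finitely many of these neighborhoods cover it. This gives an annulus $z_0\le|z|<z_1$ on which $f$ is holomorphic, with the same compactness argument as in Lemma~\ref{roc_hatP_theta_le-1}.
\end{itemize}
The possible singularities in $|z|\le z_1$ are isolated because they are zeros of the nonconstant meromorphic function $g$ or poles from Lemma~\ref{poles_hatP_theta}. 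Shrinking $z_1$ if necessary excludes any of them that lie in $z_0<|z|<z_1$.

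The second step is to evaluate $f(z_0)$. Using the representation in Lemma~\ref{gf_neg_theta}, the only factor that is singular at $z_0$ is $\frac{R^1(z)}{(1-\xi)z}$. The functions $\sin_{-\theta}(uz,\pm y)$ and $\cos_{-\theta}(uz,\pm y)$ are holomorphic at $z_0$: for $-1<\theta<0$ their poles lie on the imaginary axis by Lemma~\ref{residue_sin_cos_prelim}, and for $\theta\le-1$ they are entire by Corollary~\ref{sine_and_cose_prelim}. Consequently the term $\gamma^{-1/2}\sin_{-\theta}(uz,-y)$ for $y>0$, and the term $\cos_{-\theta}(uz,y)$ for $y\le0$, contribute nothing to the limit. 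By Lemma~\ref{z_0_simple},
\begin{align*}
f(z_0)=\begin{cases} c(\theta,\lambda,\xi)\cos_{-\theta}(uz_0,-y), & y>0,\\ -\gamma^{1/2}c(\theta,\lambda,\xi)\sin_{-\theta}(uz_0,y), & y\le 0,\end{cases}
\end{align*}
which equals $c(\theta,\lambda,\xi)h(y)$ in both cases, by \eqref{def_eigenfunction_neg_theta}. This is the identity \eqref{eqn_proof_C_pos}. Lemma~\ref{h_pos_thetale-1} gives $c(\theta,\lambda,\xi)>0$ and $h(y)>0$, so $f(z_0)\neq0$.

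Lemma~\ref{asymptotic_powerseries} now yields
\begin{align*}
p_n^{\xi,y}(\theta)\sim \frac{f(z_0)}{z_0}z_0^{-n}=c(\theta,\lambda,\xi)h(y)\lambda^{n+1}.
\end{align*}

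The main obstacle is the first step: proving that $z_0$ is the unique singularity on the circle $|z|=z_0$ and that $f$ extends beyond it. This relies on Lemma~\ref{zeros_g_neg_theta}, which rules out other zeros of $g$ in the closed disk, and on Lemma~\ref{poles_hatP_theta}, which shows that the poles of the $q$-trigonometric factors cancel. Given those two lemmas, the remaining argument is only bookkeeping.
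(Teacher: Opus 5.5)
Your proposal is correct and follows the paper's proof essentially step for step. You use the same choice $f(z)=(z_0-z)\hat P_{\xi,y,\theta}(z)$, obtain analyticity on a disk $|z|<z_1$ with $z_1>z_0$ from Lemmas~\ref{roc_hatP_theta_le-1} and \ref{h_pos_thetale-1}, evaluate $f(z_0)=c(\theta,\lambda,\xi)h(y)$ via Lemma~\ref{z_0_simple}, and get $f(z_0)\neq 0$ from Lemma~\ref{h_pos_thetale-1} before applying Lemma~\ref{asymptotic_powerseries}. The only difference is that you spell out the compactness argument for extending $f$ past the circle $|z|=z_0$, which the paper leaves implicit; your lower bound on $|w_k^\pm/u|$ is harmless but not needed.
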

\begin{proof}
    First, let $\theta\neq -1$. The claim for $\theta=-1$ follows analogously with the representation from Corollary~\ref{gf_theta-1} instead of Lemma~\ref{gf_neg_theta}. To apply Lemma~\ref{asymptotic_powerseries}, we construct a function $f$ with
    \begin{align}
        \hat{P}_{\xi,y,\theta}(z)=\frac{f(z)}{\frac{1}{\lambda}-z}~~~~\text{for all}~|z|<\frac{1}{\lambda}\label{eq_Lemma_neg},
    \end{align}
   where $f$ is analytic on $|z|<z_1$ with $\frac{1}{\lambda}<z_1$ and $f\left(\frac{1}{\lambda}\right)\neq 0$. By Lemma~\ref{gf_neg_theta}, 
   \begin{align*}
        f(z)&=  \left(\frac{1}{\lambda}-z\right)\hat{P}_{\xi,y,\theta}(z)
        \\&=\left(\frac{1}{\lambda}-z\right)\cdot
      \begin{cases}
          \gamma^{-1/2}\sin_{-\theta}(uz,-y)+\frac{R^1(z)}{(1-\xi) z}\cos_{-\theta}(uz,-y),~~~&y>0,
          \\\cos_{-\theta}(uz,y)
     -\gamma^{1/2}\frac{R^1(z)}{(1-\xi) z}\sin_{-\theta}(uz,y),&y\le 0,
     \end{cases}
   \end{align*}
   is a suitable choice for \eqref{eq_Lemma_neg}. Lemma~\ref{roc_hatP_theta_le-1} together with Lemma~\ref{h_pos_thetale-1} yield that $\frac{1}{\lambda}$ is the unique singularity with smallest modulus of $\hat{P}_{\xi,y,\theta}(z)$ and it is simple and isolated. In particular, there is some $z_1>\frac{1}{\lambda}$ such that the function $f$ is analytic on $|z|<z_1$. By Lemma~\ref{z_0_simple}, we have
   \begin{align*}
       f\left(\frac{1}{\lambda}\right)=\begin{cases}
         c(\theta,\lambda,\xi)\cos_{-\theta}\left(\frac{u}{\lambda},-y\right),~~~&y>0,
          \\-\gamma^{1/2}c(\theta,\lambda,\xi)\sin_{-\theta}\left(\frac{u}{\lambda},y\right),&y\le 0.
     \end{cases}
   \end{align*}
 By Lemma~\ref{z_0_simple} and
  Lemma~\ref{h_pos_thetale-1}, $f\left(\frac{1}{\lambda}\right)\neq0$ and thus, $f$ satisfies the  conditions for Lemma \ref{asymptotic_powerseries}, which now yields $p_n^{\xi,y}(\theta)\sim c(\theta,\lambda,\xi)h(y)\lambda^{n+1}$.
\end{proof}
\begin{Corollary}\label{asymptotic_theta_neg_ohne_y}
      For $\theta<0$ and  $\xi\in(0,1)$, we have, as $n\to\infty$,
  \begin{align*}
       p_n^{\xi}(\theta)\sim c_2(\theta,\xi)\lambda^{n+1},
  \end{align*}
  where $c_2(\theta,\xi)\coloneqq c(\theta,\lambda,\xi)\lambda$ and $c(\theta,\lambda,\xi)$ is defined in \eqref{c_non_zero_negtheta}.
\end{Corollary}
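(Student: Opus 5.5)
The plan is to reduce the unconditioned persistence probability to the started one by conditioning on the first innovation. Since $Z_0=X_1$ has density $\phi$ and, given $X_1=y$, the event $\Omega_n$ has probability $p_n^{\xi,y}(\theta)$, we have
\begin{align*}
    p_n^{\xi}(\theta)=\int_\R p_n^{\xi,y}(\theta)\phi(y)\d y .
\end{align*}
Dividing by $\lambda^{n+1}$ and using Lemma~\ref{asymptotic_neg_theta} pointwise, the integrand converges to $c(\theta,\lambda,\xi)h(y)\phi(y)$.

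To pass the limit under the integral I need an integrable majorant of $p_n^{\xi,y}(\theta)/\lambda^{n+1}$ that is uniform in $n$. Here I would use the monotonicity argument from the proof of Lemma~\ref{roc_P_not_depending_on_y}. For $y\le0$ it gives $p_n^{\xi,y}(\theta)\le p_n^{\xi,0}(\theta)$, and for $y>0$ it gives $p_n^{\xi,y}(\theta)\le \frac{e^y}{1-\xi}p_{n+1}^{\xi,0}(\theta)$. By Lemma~\ref{asymptotic_neg_theta} at $y=0$, both $p_n^{\xi,0}(\theta)/\lambda^{n+1}$ and $p_{n+1}^{\xi,0}(\theta)/\lambda^{n+1}$ are bounded in $n$, so the majorant has the form $C(1+e^{y}\mathds 1_{\{y>0\}})$.

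This majorant is \emph{not} integrable against $\phi$ on $y>0$, because $e^y(1-\xi)e^{-y}$ is constant. This is the main obstacle. I would get around it with the cruder bound $p_n^{\xi,y}(\theta)\le p_n^{\xi,M}(\theta)$ for $y\le M$, which follows from monotonicity in $y$, and then handle the tail $y>M$ by a Markov step. For $y>0$ we have $\theta y<0$, so
\begin{align*}
    p_{n}^{\xi,y}(\theta)=\int_{\{t>\theta y\}} p_{n-1}^{\xi,t}(\theta)\phi(t)\d t\le p_{n-1}^{\xi}(\theta),
\end{align*}
which is just Lemma~\ref{estimate_Px}. That bound is uniform in $y$, so combined with $\limsup p_{n-1}^\xi(\theta)/\lambda^{n}<\infty$ it would give the domination directly. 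So the cleaner route is to first show $p_n^\xi(\theta)=O(\lambda^n)$ and then feed it back into Lemma~\ref{estimate_Px}.

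To obtain $p_n^\xi(\theta)=O(\lambda^n)$ I would use generating functions rather than dominated convergence. By Tonelli,
\begin{align*}
    \sum_n p_n^\xi(\theta)r^n=\int_\R\hat P_{\xi,y,\theta}(r)\phi(y)\d y \quad\text{for } 0<r<z_0 .
\end{align*}
The two-sided bounds from the proof of Lemma~\ref{roc_P_not_depending_on_y} give
\begin{align*}
    p_n^{\xi}(\theta)\le \int p_n^{\xi,0}\phi\,\d y+\int_0^\infty \frac{e^y}{1-\xi}p_{n+1}^{\xi,0}(1-\xi)e^{-y}\d y .
\end{align*}
This still diverges, so I would instead apply Lemma~\ref{estimate_Px} iteratively. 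The bound $p_n^{\xi,y}(\theta)\le p_{n-1}^\xi(\theta)$ together with $p_n^{\xi,y}(\theta)\le p_n^{\xi,0}(\theta)$ for $y\le0$ gives
\begin{align*}
    p_n^\xi(\theta)\le \xi\, p_n^{\xi,0}(\theta)+(1-\xi)p_{n-1}^\xi(\theta).
\end{align*}
Since $(1-\xi)<\lambda$, which follows from the eigenvalue equation $\lambda h(x)\ge(1-\xi)\int_0^\infty e^{-t}h(t)\,\d t$ and positivity of $h$ (or can be checked directly), iterating this recursion shows $p_n^\xi(\theta)/\lambda^n$ is bounded. Dominated convergence with majorant $C\phi$ then gives
\begin{align*}
    p_n^\xi(\theta)/\lambda^{n+1}\to c(\theta,\lambda,\xi)\int h\phi .
\end{align*}
By the eigenvalue equation \eqref{eq_eigenfunction}, obtained from Lemma~\ref{eigenfunction_lemma} applied to the $y$-asymptotics, we have $\int h\phi=\lambda h(x)$ for $x$ with $\{y>\theta x\}=\R$, that is, in the limit $x\to-\infty$. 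More directly, I expect $\int_\R h\phi$ to equal $\lambda$ via $\lim_{x\to-\infty}h(x)=1$, which I would verify from $\sin_{-\theta}(uz_0,x)\to$ its leading term. Alternatively, and more robustly, I would evaluate $\int_\R h\phi$ by termwise integration of the series for $h$, or read off $\lambda$ from the representation $\hat P_{\xi,0,\theta}$. This identification of the constant as exactly $c\lambda$ is the step I would check most carefully.
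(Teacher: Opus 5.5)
Your route differs from the paper's. You disintegrate over $X_1$ and use dominated convergence in $y$. The paper instead lets $y\to+\infty$ in the closed form of $\hat P_{\xi,y,\theta}$, gets $1+z\hat P_{\xi,\theta}(z)=R^1(z)/((1-\xi)z)$, and reads the constant off the residue at $z_0$.

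Up to $p_n^\xi(\theta)/\lambda^{n+1}\to c(\theta,\lambda,\xi)\int_\R h(y)\phi(y)\,\d y$ your argument is sound, with one caveat about $1-\xi<\lambda$:
\begin{itemize}
\item As stated, the inequality plus positivity of $h$ does not give strictness.
\item If it uses the eigenvalue equation from Lemma~\ref{eigenfunction_lemma}, it is circular, since that lemma presupposes the bound on $p_n^\xi(\theta)$ you are trying to prove.
\end{itemize}
You can drop this step. The Markov step at $x=0$ gives exactly $\int_0^\infty(1-\xi)e^{-y}p_n^{\xi,y}(\theta)\,\d y=p_{n+1}^{\xi,0}(\theta)$. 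Hence $p_n^\xi(\theta)\le\xi p_n^{\xi,0}(\theta)+p_{n+1}^{\xi,0}(\theta)=O(\lambda^n)$ directly from Lemma~\ref{asymptotic_neg_theta} at $y=0$.

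The genuine gap is the last step, $\int_\R h\phi=\lambda$. This is exactly what turns $c\int_\R h\phi$ into the claimed $c_2=c\lambda$, and your argument for it is wrong as written. For $\theta<0$ the set $\{y>\theta x\}$ increases to $\R$ as $x\to+\infty$, not $x\to-\infty$. Moreover $h(x)\to 0$, not $1$, as $x\to-\infty$: every exponent in the series of $\sin_{-\theta}(uz_0,x)$ is a positive multiple of $x$, and probabilistically, starting far below zero forces $X_2>\theta x\to+\infty$.

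The correct version is $\lambda h(x)=\int_{\{y>\theta x\}}h\phi\to\int_\R h\phi$ as $x\to+\infty$, together with $h(x)=\cos_{-\theta}(uz_0,-x)\to 1$. That limit is precisely the analytic input of the paper's proof. For $-1<\theta<0$ it is not a leading-term observation, because $|uz_0|$ need not lie in the disk $|z|<1/(1+\theta)$ where the defining series converges. One needs the representation of Lemma~\ref{sin_e_and_cos_e_prelim}.

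Your termwise-integration alternative also works and is short. Set $q=-\theta$, $a=-uz_0=\arctan_q(\gamma^{-1/2})$, and use $1+q+\dots+q^k=[k+1]_q$. Then $\int_0^\infty e^{-y}\cos_q(a,-y)\,\d y=\sin_q(a)/a$ and $\int_{-\infty}^0 e^{y}\sin_q(a,y)\,\d y=(1-\cos_q(a))/a$. Now use $h(y)=\gamma^{1/2}\sin_q(a,y)$ for $y\le 0$, $\sin_q(a)=\gamma^{-1/2}\cos_q(a)$, and $(1-\xi)\gamma^{-1/2}=\xi\gamma^{1/2}=\sqrt{\xi(1-\xi)}$. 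This gives
\[
\int_\R h(y)\phi(y)\,\d y=\frac{\sqrt{\xi(1-\xi)}}{a}\bigl(\cos_q(a)+1-\cos_q(a)\bigr)=\lambda .
\]
For $-1<\theta<0$, justify the termwise integration through Lemma~\ref{sin_e_and_cos_e_prelim} or by analytic continuation in $a$.

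Until one of these two computations is actually carried out, your argument only yields $p_n^\xi(\theta)\sim c\bigl(\int_\R h\phi\bigr)\lambda^{n+1}$, not the stated constant. The paper's generating-function route never has to evaluate this integral.
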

\begin{proof}
For fixed $n\in\N$, we have $\lim_{y\to+\infty} p_n^{\xi,y}(\theta)= p_{n-1}^{\xi}(\theta)$. Hence, by the monotone convergence theorem and with $\hat P_{\xi,\theta}(z)\coloneqq \sum_{n=0}^\infty p_n^\xi(\theta) z^n$, for $0\leq z<z_0$,
\begin{align*}
    \lim_{y\to\infty}\hat P_{\xi,y,\theta}(z)=1+z\hat P_{\xi,\theta}(z).
\end{align*}
We next determine the limit on the left-hand side. By Lemma~\ref{gf_neg_theta} if $\theta\neq -1$ and Corollary~\ref{gf_theta-1} if $\theta=-1$, we have
\begin{align*}
    \hat P_{\xi,y,\theta}(z)=\gamma^{-1/2}\sin_{-\theta}(uz,-y)+\frac{R^1(z)}{(1-\xi)z}\cos_{-\theta}(uz,-y).
\end{align*}
For every fixed real $z$, we have $\lim_{y\to\infty}\sin_{-\theta}(uz,-y)=0$ and $\lim_{y\to\infty}\cos_{-\theta}(uz,-y)=1$:
Indeed, if $\theta<-1$, these limits follow from the defining series,
dominated convergence, and Lemma~\ref{e_theta_entire_thetage1}.
For $\theta=-1$, they follow immediately from the continuous
extensions.

Finally, let $-1<\theta<0$ , so by Lemma~\ref{sin_e_and_cos_e_prelim},
\begin{align*}
    \sin_{-\theta}(uz,-y)
    &=e^{\frac{\theta y}{1+\theta}}\sum_{m=0}^\infty\frac{\left(\frac{-\theta y}{1+\theta}\right)^m}{m!}\sin_{-\theta}\bigl((-\theta)^m uz\bigr),\\
    \cos_{-\theta}(uz,-y)
    &= e^{\frac{\theta y}{1+\theta}}\sum_{m=0}^\infty\frac{\left(\frac{-\theta y}{1+\theta}\right)^m}{m!}\cos_{-\theta}\bigl((-\theta)^m uz\bigr).
\end{align*}
Since $(-\theta)^m uz\to0$, the terms in the two sums converge to
$0$ and $1$, respectively, and both sequences are bounded. Moreover,
the non-negative weights $e^{\frac{\theta y}{1+\theta}}\frac{\left(\frac{-\theta y}{1+\theta}\right)^m}{m!}$, $m\in\N_0$ sum to one, while their sum over any fixed finite set of indices
converges to zero as $y\to\infty$. Splitting the sums into finitely
many initial terms and the remaining tail therefore proves the two
limits.

Hence, for $0\le z<z_0$, we obtain
\begin{align*}
    1+z\hat P_{\xi,\theta}(z)=\lim_{y\to+\infty}\hat P_{\xi,y,\theta}(z)=\frac{R^1(z)}{(1-\xi)z},
\end{align*}
so the Identity Theorem yields that $\hat P_{\xi,\theta}(z)=\frac{R^1(z)-(1-\xi)z}{(1-\xi)z^2}$ on $|z|<z_0$ with a removable singularity at $z=0$. Recalling that, analogous to the proof of Lemma~\ref{asymptotic_neg_theta}, $f(z)\coloneqq (z_0-z)\hat P_{\xi,\theta}(z)$ is analytic on $|z|<z_1$ with some $z_1>z_0$ and
$\lim_{z\to z_0}f(z)=\frac{c(\theta,\lambda,\xi)}{z_0}$, Lemma~\ref{asymptotic_powerseries} yields $ p_n^{\xi}(\theta)\sim \frac{c(\theta,\lambda,\xi)}{z_0^2}z_0^{-n}$.
   Replacing $z_0=\frac{1}{\lambda}$ finishes the proof.
\end{proof}
\begin{proof}[Proof of Theorem~\ref{maintheorem} (iii)]
Applying the asymptotics from Lemma~\ref{asymptotic_neg_theta} and Corollary~\ref{asymptotic_theta_neg_ohne_y} to Lemma~\ref{eigenfunction_lemma} and Lemma~\ref{Kernel_lim_process} yields the claim.
\end{proof}
\begin{proof}[Proof of Theorem~\ref{invariant_distribution} (iii)]
Let $\theta<0$ and $\xi\in(0,1)$ and set $\bar\theta\coloneqq \theta^{-1}<0$. Define 
\begin{align*} 
\tilde h_{\theta,\xi}(x) \coloneqq h_{\bar\theta,\xi}(x), \qquad x\in\R,
\end{align*}
where $h_{\bar\theta,\xi}$ is the right eigenfunction satisfying \eqref{eq_eigenfunction} from Theorem~\ref{maintheorem} (iii).
Since $\theta<0$, it holds $ y>\theta x $ if and only if $ x>\bar\theta y $. Consequently, 
\begin{align*} 
\int_\R \mathds{1}_S(y,x) \tilde h_{\theta,\xi}(x) \phi(x)\d x 
= \int_\R \mathds{1}_{\{x>\bar\theta y\}} h_{\bar\theta,\xi}(x) \phi(x)\d x
= \lambda_{\bar\theta ,\xi} h_{\bar\theta ,\xi}(y). 
\end{align*} 
We now verify that $ \lambda_{\bar\theta ,\xi} = \lambda_{\theta,\xi}$. By Corollary~\ref{arctan_cor}, $ \arctan_{-\theta}(\gamma^{-1/2}) = \arctan_{-\bar\theta}(\gamma^{-1/2})$ and hence, $ \lambda_{\bar\theta ,\xi} = \frac{ \sqrt{\xi(1-\xi)} }{ \arctan_{-\theta}(\gamma^{-1/2}) }=\lambda_{\theta,\xi}$. We have thus proved the left eigenfunction equation 
\begin{align*} 
\int_\R \mathds{1}_S(y,x) \tilde h_{\theta,\xi}(x) \phi(x)\d x = \lambda_{\theta,\xi} \tilde h_{\theta,\xi}(y), \qquad y\in\R. 
\end{align*} 
Set 
\begin{align*}
I  \coloneqq \int_\R h_{\theta,\xi}(x) h_{\bar\theta ,\xi}(x) \phi(x)\d x.
\end{align*} 
Both right eigenfunctions are strictly positive on $\R$, and therefore $ I >0$. Moreover, Lemma~\ref{estimate_Px}, together with the persistence asymptotics from Lemma~\ref{asymptotic_neg_theta} and Corollary~\ref{asymptotic_theta_neg_ohne_y}, implies that both right eigenfunctions are bounded. Since $\phi$ is a probability density, $ I <\infty$. Lemma~\ref{invariant_distribution_limiting_process} now shows that the following probability measure is invariant for $p'$:
\begin{align*} 
\pi (\d x) = \frac{ h_{\theta,\xi}(x) h_{\bar\theta ,\xi}(x) }{I} \phi(x)\d x, \quad x\in\R.
\end{align*}

It is left to derive the explicit expression for this invariant distribution. By Corollary~\ref{arctan_cor}, we may abbreviate $a\coloneqq \arctan_{ -\theta}(\gamma^{-1/2})=\arctan_{-\bar\theta}(\gamma^{-1/2})$. Using the eigenfunctions from Theorem~\ref{maintheorem} (iii) and the explicit form of $\phi$,
it follows that 
\begin{align*}
    h_{\theta,\xi}(x) h_{\bar\theta,\xi}(x) \phi(x)\d x = (1-\xi) \begin{cases} e^{-x} \cos_{-\theta}(a,-x) \cos_{-\bar \theta}(a,-x)\d x, &x>0,\\ e^x \sin_{-\theta}(a,x) \sin_{-\bar \theta}(a,x)\d x, &x\leq0. \end{cases}
\end{align*}
Thus, the invariant distribution admits the explicit representation 
\begin{align*}
    \pi(\d x) = \frac{1}{b} 
    \begin{cases} e^{-x} \cos_{-\theta}(a,-x) \cos_{-\bar \theta}(a,-x)\d x, &x>0,\\ e^x \sin_{-\theta}(a,x) \sin_{-\bar \theta}(a,x)\d x, &x\leq0, \end{cases}
\end{align*}
with $b\coloneqq\frac{I}{1-\xi}= \int_0^\infty e^{-x} \cos_{-\theta}(a,-x) \cos_{-\bar \theta}(a,-x)\d x + \int_{-\infty}^0 e^x \sin_{-\theta}(a,x) \sin_{-\bar \theta}(a,x)\d x$.

To prove uniqueness, note that $p'$ has a transition density which is strictly positive whenever $y>\theta x$. Let $p^n(x,\cdot)$ denote the $n$-step transition density. For $x,y\in\R$ and $z>\max\{\theta x,\bar\theta y\}$, we have $z>\theta x$ as well as $y>\theta z$. Consequently, $p^2(x,\d y)=\int_{\R}p'(x,\d z)p'(z,\d y)$ has a strictly positive Lebesgue density for every $x\in\R$. Thus, $p'$ is irreducible, so it admits at most one invariant probability measure. Hence, $\pi$ is the unique invariant distribution.
\end{proof}

\section{The trivial cases}\label{trivialcases}
\paragraph{The case \texorpdfstring{$\theta=0$}{theta =0}.}
The case (v) in Theorem~\ref{maintheorem} and Theorem~\ref{invariant_distribution} is trivial: Since the decoupling parameter is zero, we condition the i.i.d.\ innovations $(X_i)$ on positivity giving $p_n^{\xi,y}(0)=(1-\xi)^n$ and so $\lambda=1-\xi$ and $h\equiv 1$, and the resulting conditioned chain consists of i.i.d.\ standard exponential random variables (unless $\xi=1$ where this is impossible).

\paragraph{The case \texorpdfstring{$\theta>0$}{theta >0}.}
 In the special case $0<\theta<1$ and $\xi=1$, we have $X_i<0$ a.s., and $X_{i+1} >\theta X_i$ forces the random variables to be closer and closer to $0$. Correspondingly, one can show (with Lemma~\ref{gf_pos_theta} and Lemma~\ref{singularities_pos_theta} for $y<0$) that $(p_n^{1,y}(\theta))$ decays superexponentially, while trivially $p_n^{1,y}(\theta)=0$ for $y\geq 0$. Thus, this case does not fall within the positive-exponential-rate Doob-transform framework.

Similarly, for $\theta>1$ and $\xi=0$, the random variables are a.s.\ non-negative and $X_{i+1}> \theta X_i$ forces them to become larger and larger. Correspondingly, $\left(p_n^{0,y}(\theta)\right)_{n\in\N_0}$ have a superexponential decay, which can be shown by Lemma~\ref{asymptotic_pos_theta_ge1_pos_y} (for $y>0$) and Lemma~\ref{gf_pos_theta} and Lemma~\ref{e_theta_entire_thetage1} (for $y\leq 0$). So, again this case does not fall within the positive-exponential-rate Doob-transform framework. 

\paragraph{The case \texorpdfstring{$\theta<0$}{theta <0}.}
In the special case $\xi=1$, the $X_i$ are negative, for $i\geq 2$, so that $X_{i+1}>\theta X_i$ is impossible for a negative $\theta$. Thus, the question does not make sense here, as $p_n^{1,y}(\theta)=0$, $n\geq 2$.

Finally, let us treat the case (iv) in Theorem~\ref{maintheorem} and Theorem~\ref{invariant_distribution}, i.e.\ $\xi=0$. Here, $X_i>0$ and with $\theta<0$, the constraints $X_{i+1}>\theta X_i$ are always satisfied (unless the starting point is negative). 
In terms of $h$ and $\lambda$, one can check readily that $\lambda=1$ and $h$ is given by (\ref{eqn:lasth}).

\end{document}